\patchcmd{\section}{\scshape}{\bfseries}{}{}
\renewcommand{\@secnumfont}{\bfseries}
\newcommand\testname{\textbf{ Abstract}}
\newenvironment{abs}{%
    \small
    \begin{center}%
        {\textsc \testname\vspace{-.2em}\vspace{\z@}}%
    \end{center}%
    \quote
    }
   {\endquote}
\DeclareMathOperator*{\Id}{Id}
\newcommand{\vphi}{\varphi}
\newcommand{\Div}{\mathrm{div}}
\newcommand{\ee}{\varepsilon}
\newcommand{\Cc}{\mathcal{C}}
\newcommand{\dd}{d}
\newcommand{\pre}{ p}
\newcommand{\uu}{u}
\newcommand{\s}{ \mathrm{s}}
\newcommand{\TT}{\mathbb{T}}
\newcommand{\I}{\mathcal{I}}
\newcommand{\EE}{\mathcal{E}}
\newcommand{\DD}{\mathfrak{D}}
\newcommand{\RR}{\mathbb{R}}
\newcommand{\ZZ}{\mathbb{Z}}
\newcommand{\R}{\mathbb{R}}
\newcommand{\NN}{\mathbb{N}}
\newcommand{\N}{\mathbb{N}}
\newcommand{\BB}{\dot{B}}
\newcommand{\Hh}{\dot{H}}
\newcommand{\Dd}{\dot{\Delta}}
\newcommand{\Sd}{\dot{S}}
\newcommand{\no}[2]{ \left\| #1 \right\|_{#2} } 
\newcommand{\ext}{\mathrm{ext}}
\newcommand{\eff}{\mathrm{eff}}
\newcommand{\To}{{\mathbb{T}^2}}
\newcommand{\esssup}{{\operatorname{ess~sup}}}
\newcommand{\Sp}{{\mathbb{S}^2}}
\newcommand{\eps}{\epsilon}
\newtheorem{theorem}{Theorem}[section]
\newtheorem{cor}[theorem]{Corollary}
\newtheorem{prop}[theorem]{Proposition}
\newtheorem{lemma}[theorem]{Lemma}
\newtheorem{definition}[theorem]{Definition}
\newtheorem{remark}[theorem]{Remark}
\DeclareFontFamily{OT1}{rsfs}{}
\DeclareFontShape{OT1}{rsfs}{m}{n}{ <-7> rsfs5 <7-10> rsfs7 <10-> rsfs10}{}
\DeclareMathAlphabet{\mycal}{OT1}{rsfs}{m}{n}
\definecolor{grey}{rgb}{0.85,0.85,0.85}
\date{}
\title{\Large 
	\textbf{\uppercase{Struwe-like solutions for an evolutionary model of magnetoviscoelastic fluids}}}
\author{Francesco De Anna$\,^1$,\quad Joshua Kortum$\,^2$,\quad Anja Schl\"omerkemper$\,^3$}
\affil{	\small{$\,^1$Institute of Mathematics, University of W\"urzburg, Germany}\\
		\small\textnormal{francesco.deanna@mathematik.uni-wuerzburg.de}
		\vspace{0.2cm}}
\affil{	\small{$\,^2$Institute of Mathematics, University of W\"urzburg, Germany}\\
		\small\textnormal{joshua.kortum@mathematik.uni-wuerzburg.de}
		\vspace{0.2cm}}
\affil{	\small{$\,^3$Institute of Mathematics, University of W\"urzburg, Germany}\\
		\small\textnormal{anja.schloemerkemper@mathematik.uni-wuerzburg.de}
		\vspace{0.2cm}}
\date{\today}		
\keywords {magnetoviscoelastic fluids, well-posedness, Struwe-like solutions, Littlewood-Paley decomposition}
\begin{document}
\maketitle 
 
\begin{abs}
	 In this work we investigate the existence and uniqueness of Struwe-like solutions for a system of partial differential equations modeling the dynamics of magnetoviscoelastic fluids. The considered system couples a Navier-Stokes type equation with a 
     dissipative equation for the deformation tensor and a Landau--Lifshitz--Gilbert type equation for the magnetization field. 
      
     \noindent 
      The main purpose is to establish a well-posedness theory in a two-dimensional periodic domain under standard assumption of critical regularity for the (possibly large) initial data. We prove that the considered weak solutions are everywhere smooth, except for a discrete set of time values.
      
      \noindent The proof of the uniqueness is based on suitable energy estimates for the solutions within a functional framework which is less regular than the one of the Struwe energy level. These estimates rely on several techniques of harmonic analysis and paradifferential calculus.
      
\end{abs}
 
\medskip 
\noindent{\bf Keywords.} Magnetoviscoelastic fluids, well-posedness, Struwe-like solutions, Littlewood-Paley decomposition. 

\medskip
\noindent{\bf AMS subject classification.} 76A10, 76W05, 76D05, 42B25, 42B37.
 
\section{Introduction}
\noindent In this paper we are concerned with the analysis of the Cauchy problem associated to the following nonlinear system of  parabolic-type equations, modeling the motion of a magnetoviscoelastic fluid: 
\begin{equation} \label{main_equation}
\begin{cases}
	\, \partial_t \uu  + \uu \cdot \nabla \uu - \nu \Delta \uu + \nabla \pre =-\, \Div (\nabla M\odot \nabla M - W'(F) F^\top)+\mu_0 \nabla^\top H_{\rm ext}M  \hspace{1.5cm }&(0,T)\times \mathbb{T}^2 ,\\
	 \,\Div \, \uu = 0&(0,T)\times \mathbb{T}^2,\\
	 \,\partial_t F  + \uu \cdot \nabla F - \nabla \uu F  = \kappa \Delta F&(0,T)\times \mathbb{T}^2,\\ 
	\, \Div \, F^\top=0,\quad |M|^2 = 1& (0,T)\times \mathbb{T}^2, \\
	\, \partial_t M + (\uu \cdot \nabla ) M =  - M \wedge H_{\text{eff}} - M\wedge M\wedge H_{\text{eff}}  
	& (0,T)\times \mathbb{T}^2,\\
\end{cases}
\end{equation}
where $\TT^2= \Pi_{i=1}^2\RR/2\pi\ZZ$ is the periodic torus on $\RR^2$, $T>0$ is a positive time, $\uu = \uu(t,x)$ stands for the velocity field of the fluid on $\RR^2$, $\pre=\pre(t,x)$ is the scalar-valued function that represents the  pressure of the system, $F= F(t,x)$ is the deformation tensor of the fluid in $\RR^{2\times 2}$. Moreover $W(F)$ is the elastic energy density and $M= M(t,x)\in\RR^3$ stands for the magnetization attending values in  $\mathbb{S}^2$.  The effective magnetic field $H_{\text{eff}}$ satisfies
\begin{equation*}
    H_{\text{eff}} = \Delta M +\mu_0 H_{\rm ext}-\psi'(M),
\end{equation*}
for a suitable even polynomial $\psi(M)$ depending on $M$ and for  an external applied magnetic field $H_{\rm ext} = H_{\rm ext}(t,x)$.  The entire system expresses a strong coupling and competition between the three state variables $(\uu,\,F,\,M)$, where $\uu$ satisfies a forced Navier-Stokes equation, $F$ evolves along with the motion given by a dissipative transport equation with stretching effects for the deformation tensor, while $M$ fulfills the  Landau-Lifshitz-Gilbert (LLG) equation, i.e.\ $\eqref{main_equation}_5$. We supplement  system \eqref{main_equation} with the initial data
\begin{equation*}
	\uu_{|t=0} =\uu_0,\qquad F_{|t=0}=F_0,\qquad M_{|t = 0}=M_0,
\end{equation*}
for suitable periodic functions $(\uu_0,\,F_0,\,M_0)$ defined over $\TT^2$ (cf.\ Theorem~\ref{main-thm1}). In addition, we recall the definition of the conservative force related to the nonlinearity $(\nabla M\odot\nabla M)_{ij}=\partial_i M \cdot \partial_j M$. 

\noindent As main results of this paper, we will prove the existence and uniqueness of suitable weak solutions of system \eqref{main_equation} for a large class of initial data $(u_0,\,F_0,\,M_0)$ in Theorem~\ref{struweSolutions} and Theorem~\ref{thm:uniqueness}, respectively, see also Subsection~\ref{subsect-main-results}. We address the case of a periodic domain $\mathbb{T}^2$, in which the Fourier transform is available. 

\subsection{\bf Derivation of the model and related works}$\,$

\noindent  The mathematical study of system \eqref{main_equation} has a quite recent history, which we briefly describe in this paragraph. For a derivation of system \eqref{main_equation} we refer the reader to   \cite{Forster,MR3763923}. Indeed, the model can be achieved through an energetic variational approach, stating that the entire system of partial differential equations can be determined by the free energy of the material together with the overall kinematics and mechanisms of dissipation  (cf.\ \cite{MR3916774}). The Helmholtz free energy in magnetoelasticity can be expressed by 
\begin{equation*}
	\psi_{\rm free}(F,M) = \underbrace{\int_{\TT^2} \frac{|\nabla M|^2}{2}}_{\text{exchange energy}} + \underbrace{\int_{\TT^2} W(F)}_{\text{elastic energy}}- \underbrace{\mu_0\int_{\TT^2}M\cdot H_{\rm ext}}_{\text{Zeeman energy}}
	+
	\underbrace{ \int_{\TT^2} \psi(M) }_{\substack{\text{anisotropy and}\\  \text{ stray field energy}}}.
\end{equation*}
On the right-hand side, the function $\psi(M)$ stands for a polynomial depending on the magnetization  $M$ which reflects the anisotropy and, in two dimensions, the stray field energy, see \cite{gioiajames}.
The LLG equation represents the kinematics of the magnetization field $M$.
The particular structure of the $M$-equation automatically leads to the unitary constraint
\begin{equation}\label{unit-constr}
	| M(t,x) |= 1\qquad\text{for almost any}\quad (t,x)\in (0,T) \times \TT^2.
\end{equation}
It is well-known that the above condition generates one of the major difficulties for the analysis of the considered equations. 

\noindent When the magnetization field $M$ is assumed overall constant, system \eqref{main_equation} reduces to the so-called viscoelastic system, which has been highly treated in literature (cf.\ \cite{MR3320133, MR2165379, MR3277178, MR3048595}). In such a framework, it is common to assume that the deformation tensor $F$ satisfies the following convection equation:
\begin{equation*}
	\partial_t F+\uu\cdot \nabla F =\nabla u F .
\end{equation*}
This equation retains important difficulties associated to the stretching term due to the right-hand side of the above identity. To overcome these challenges, it is usual in literature to require   a smallness condition on the initial data, or to introduce an additional dissipative mechanism to the system. 
In \cite{kalousekAnja}, for instance, the authors dealt with System \eqref{main_equation} under the condition $\kappa = 0$, investigating the existence of so-called dissipative solutions. The dissipative mechanism was in these solutions reflected by the appearance of some dissipation defect, perturbing the overall equations. In this manuscript we stick to the system with $\kappa>0$, considering an additional diffusion operator in $F$, as it was introduced in \cite{MR3816746, MR3763923} in the context of magnetoelasticity.

\medskip\noindent 
An existence result concerning weak solutions for a system similar to  \eqref{main_equation} was obtained  in \cite{Forster}, while the uniqueness of these solutions was investigated  in \cite{MR3816746}. 
The considered system involves a Ginzburg-Landau penalization, allowing a simplification of the nonlinearities concerning the unitary constraint. More precisely, the Helmholtz free energy  is recasted by
\begin{equation}\label{GL-pen}
	\psi_{\rm free}(F, M) +\frac{\left(|M|^2-1\right)^2}{4\ee^2},
\end{equation}
for a suitable small parameter $\ee>0$. This formulation, allows somehow to unlock a global-in-time estimate of $\Delta M$, since the main nonlinearity $|\nabla M|^2 M$ in the LLG equation reduces to $-(|M|^2-1)M/\varepsilon^2$, which depends on lower regularity of $M$.
The authors proved that any finite energy initial data (meaning square-integrable data) generates a global-in-time weak solution 
on a bounded domain. These solutions satisfy suitable energy inequalities and are unique in dimension two. In \cite{MR3824719}, local well-posedness and blow-up criteria of classical solutions were obtained, still in the framework of the Ginzburg-Landau approximation \eqref{GL-pen}. 

\medskip\noindent
In this manuscript, however, we do not consider the Ginzburg-Landau relaxation and we preserve the genuine unitary constraint on $M$ together with its analytical challenges.

\noindent The main result in \cite{MR3763923} gives a partial answer to the construction of a global weak solution for system \eqref{main_equation}, when the unitary constraint on $M$ is preserved. Indeed, the authors showed that such a solutions exists whenever the initial data satisfies a smallness condition related to the energy norms and the initial magnetization has a slightly additional regularity than the one proposed by the energy inequality. Recently, local-in-time existence of classical solutions for system \eqref{main_equation} on a bounded domain as well as  a weak-strong uniqueness result was obtained in \cite{kalousek}.

\smallskip\noindent To the best of our knowledge, the well-posedness of global solutions for large data within the function space given by the energy has not been treated yet in literature. We hence provide in this manuscript a complete new analysis related to this problem. Nevertheless, we mention that system \eqref{main_equation} shares similarities with the Ericksen-Leslie equations for the evolution of the so-called nematic liquid crystals (cf.\ \cite{LIU2019108294,MR1329830}). In particular, the magnetization equation for $M$ satisfies a similar equation as the one of the director field driving the evolution of the molecular orientation of the liquid crystal. The main differences and difficulties in system \eqref{main_equation} result from the presence of an external force given through $H_\ext$, an  additional convection term in the LLG equation due to the gyromagnetic factor $M\wedge H_{\rm ext}$ and finally the appearance of the deformation tensor $F$ with energy density $W$, not being coercive in $\nabla F$. The presence of these top-order nonlinearities additionally complicates the analysis of our system. 

\subsection{\bf Main results}\label{subsect-main-results}$\,$

\noindent
Let us start by considering the mathematical framework, and more precisely by giving the definition of weak solution which is relevant to us. Our analysis relies basically on energy estimates. More specifically, we  look for weak solutions of Struwe type originally constructend by Struwe in \cite{struwe} for the harmonic map heat flow on Riemannian surfaces. Struwe-like solutions are weak solutions that are smooth away from a finite number of singularities (here in time)  and their behaviour at singular points can be characterized by  concentration of the local energy. We present here their formal definition and we refer to Section~\ref{app:LP} for a definition of the considered functional spaces.
\begin{definition}[Struwe-like solutions]\label{def:weak-sol} A triple $(\uu,\, F,\,M)$ is called a Struwe-like solution to system \eqref{main_equation} in $(0,T)\times \TT^2$ with initial data $(\uu_0,\,F_0,\,M_0)\in L^2(\TT^2)\times L^2(\TT^2)\times H^1(\TT^2)$ satisfying $\Div \; \uu_0 =0, \Div\; F_0^\top =0 $ and $ |M_0|=1$ if it fulfills
\begin{equation}\label{def:weak-sol-energy-space}
\begin{alignedat}{16}
	\uu 	&\in L^\infty(0,T,\, L^2(\TT^2))		&&\cap 	L^2(0,T;\,H^1(\TT^2))  		\quad\quad&&&&\text{with}\quad\quad  &&&&&&&&\partial_t \uu &&&&&&&&\in L^2(T_i, \widetilde T;\,H^{-1}(\TT^2)),\\
	F 	&\in L^\infty(0,T,\, L^2(\TT^2))				&&\cap 	L^2(0,T;\,\Hh^1(\TT^2))  		\quad\quad&&&&\text{with}\quad\quad  &&&&&&&&\partial_t F 	&&&&&&&&\in L^2(0,T;\,\Hh^{-1}(\TT^2)),\\
	M	&\in L^\infty(0,T,\, H^1(\TT^2))			&&\cap L^2(T_i, \widetilde T; \,H^2(\TT^2))			\quad\quad&&&&\text{with}\quad\quad  &&&&&&&&\partial_t  M  &&&&&&&&\in L^2(T_i, \widetilde T;\,L^2(\TT^2))	,
\end{alignedat}
\end{equation}
for any $\widetilde T\in [T_i, T_{i+1})$ and for a suitable ordered finite family of times $\{T_1,\dots , T_N\}$, with $T_0 = 0$ and $T_N\leq T$ . Furthermore the following identities are satisfied,
\begin{equation*}
	\int_{\TT^2} \uu(t,\,x)\cdot \nabla \phi(x)\dd x \,=\,0 \quad\quad\text{for a.e.}\quad t\in (0,T),
\end{equation*}
for any function $\phi$ in $\dot H^1(\TT^2)$, as well as
\begin{equation*}
\begin{aligned}
	-\int_0^T\int_{\TT^2} & \uu\cdot \partial_t \varphi  -\int_0^T\int_{\TT^2} \uu \otimes \uu :\nabla \varphi +\nu \int_0^T\int_{\TT^2} \nabla\uu:\nabla\varphi \\ &=\,
	-\int_{\TT^2} \uu_0(x)\cdot \varphi(0,x)
	+\int_0^T\int_{\TT^2} \nabla M \odot \nabla M : \nabla  \varphi  -\int_{\TT^2} W'(F) F^\top : \nabla  \varphi
	+\int_0^T\int_{\TT^2} \mu_0\nabla H_{\rm ext} M \cdot  \varphi 
\end{aligned}
\end{equation*}
for any smooth vector function $\varphi \in \mathfrak{D}([0,T)\times  \TT^2)$, with $\Div\,\varphi = 0$. Similarly, for any matrix function $\Xi \in \mathfrak{D}([0,T)\times \TT^2)$,
\begin{equation*}
\begin{aligned}
	-\int_0^T\int_{\TT^2}  F\cdot \partial_t \Xi  -\int_0^T\int_{\TT^2}  \uu \otimes F:\nabla \Xi -\sum_{i,j,k=1}^2\int_0^T\int_{\TT^2}  \uu_i F_{jk}\partial_j \Xi_{ik} +\kappa \int_0^T\int_{\TT^2}  \nabla F:\nabla\Xi \,=\,
		-\int_{\TT^2}  F_0\cdot  \partial_t \Xi.
\end{aligned}
\end{equation*}
Finally, the following identity must be satisfied pointwise almost everywhere in $ (0,T)\times \TT^2$,
\begin{equation*}
	\partial_t M + (\uu \cdot \nabla ) M =  - M \wedge H_\eff - M \wedge (M \wedge H_\eff), 
\end{equation*}
together with the initial condition $M(t,\cdot) \to M_0$ as $t \to 0^+$  in $L^2(\TT^2)$.

\noindent If $\{T_1,\dots , T_N\}$ is the minimum set for which the above relations holds true, then we say that $\{T_1,\dots , T_N\}$ is the set of singular times of the Struwe-like solution.
\end{definition}
\noindent Before describing our main result, we first state the assumptions we impose on the elastic energy density $W$ and anisotropic/stray-field energy $\psi$ of our system \eqref{main_equation}. We assume $W(\mathcal{R}F) = W(F)$ for any $\mathcal{R}\in SO(2)$ (thus $W'(\mathcal{R}\Xi) =\mathcal{R} W'(\Xi)$, for any $\Xi\in \R^{2\times 2}$) which reflects the commonly required frame-indifference in elasticity. Furthermore, we assume that $W\in C^2(\R^{2\times 2})$ is of second-order growth, more precisely there exists a constant $C_1>0$ such that
\begin{equation*}
    C_1|A|^2 \leq W(A) \leq \frac{1}{C_1} (|A|^2+1),\qquad \text{for any}\quad A \in \RR^{2\times 2}.
\end{equation*}
In addition, we further assume that $W'(0) = 0$ and that $W'(F)$ is of first-order growth, more precisely there exist two constants $\chi>0$ and $C_2>0$ such that
\begin{equation*}
    |W'(A)-\chi A |\leq C_2,\qquad \text{for all}\quad A \in \RR^{2\times 2},
\end{equation*}
and likewise $W''(A)$ is bounded, i.e.
\begin{equation*}
    |W''(A)|\leq C_3.
\end{equation*}
Finally, we assume that $W(F)$ is  a convex function. 
It is well known that this assumption reduces the application of system \eqref{main_equation} in real materials. However, the convexity of $W$ is one of the key ingredients for the existence of the Struwe-like solutions (cf.\ Section~\ref{sec:existence}).
Our assumptions on $\psi: \RR^3 \to \RR$ consist of 
$\psi$ being a quadratic non-negative polynomial which is even, i.e.\ 
$$ \psi(M) = \psi(-M)$$
for every $M \in \Sp$. Therefore we set
\begin{equation*}
    \sup_{M \in \Sp} |\psi'(M)| =: \sigma<+\infty,
\end{equation*}
which exists due to the compactness of $\Sp$.
Further we assume that $H_\ext \in C^1([0,T]\times \mathbb{T}^2)$.


\noindent The main results of this paper (i.e.\ Theorem~\ref{struweSolutions} and Theorem~\ref{thm:uniqueness}) can be summarised in the following theorem about the existence and uniqueness of Struwe-like solutions for system \eqref{main_equation}.
\begin{theorem}\label{main-thm1}
	Let $(u_0,\,F_0,\,M_0)\in L^2(\TT^2)\times L^2(\TT^2) \times H^1(\TT^2)$ be given initial data satisfying $\Div\, u_0 =0$ and $\Div F_0^\top = 0$ in the sense of distributions. Assume that $|M_0|\equiv 1$ almost everywhere in $\TT^2$. Then there exists a unique global Struwe-like solution for system \eqref{main_equation} as depicted in Definition~\ref{def:weak-sol}. Furthermore, there are two constants $\ee_0>0$ and $R_0>0$ such that for any singular time $T_i$, there is at least one singular point $y_i\in \TT^2$, characterized by the condition 
	$$ 	\limsup_{t\nearrow T_i} \int_{B_R(y_i)}|\nabla M(t,x)|^2\dd x\geq \ee_0, $$
	for any $R>0$ with $R\leq R_0$.
\end{theorem}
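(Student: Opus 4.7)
The statement of Theorem~\ref{main-thm1} merges the existence result (Theorem~\ref{struweSolutions}) and the uniqueness result (Theorem~\ref{thm:uniqueness}), which require genuinely different strategies. The plan is to handle the two parts separately: existence is obtained via a Struwe-type $\varepsilon$-regularity/bubbling argument built on a smooth approximation scheme, while uniqueness is obtained through energy estimates performed below the Struwe regularity level and implemented with Littlewood--Paley theory and Bony's paradifferential calculus.

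For the existence part, I would first construct a family of regularized solutions, either through a Faedo--Galerkin projection on frequency bands or through the Ginzburg--Landau penalization \eqref{GL-pen} with parameter $\varepsilon\searrow 0$. The backbone is the global Helmholtz-type identity
\begin{equation*}
\tfrac{d}{dt}\Big(\tfrac{1}{2}\|\uu\|_{L^2}^2 + \psi_{\rm free}(F,M)\Big) + \nu\|\nabla \uu\|_{L^2}^2 + \kappa\|\nabla F\|_{L^2}^2 + \|M\wedge H_{\eff}\|_{L^2}^2 = \mu_0\int_{\TT^2}\partial_t H_{\ext}\cdot M,
\end{equation*}
which, after passing to the limit, yields the $L^\infty(L^2)\cap L^2(H^1)$ controls listed in \eqref{def:weak-sol-energy-space}. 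The core Struwe ingredient is a local $\varepsilon$-regularity statement: there exist $\varepsilon_0,R_0>0$ such that whenever, on a parabolic cylinder of radius $R\leq R_0$, one has $\sup_t\int_{B_R(x_0)}|\nabla M|^2<\varepsilon_0$, the solution is actually smooth on a smaller cylinder. Here the dissipative term $\kappa\Delta F$ is essential: it lets us absorb the otherwise non-coercive stress $\Div(W'(F)F^\top)$ through the convexity of $W$ together with the bound $|W'(A)-\chi A|\leq C_2$. Because the global energy is non-increasing up to a controlled Zeeman contribution, every singular time must drain at least $\varepsilon_0$ from the $\|\nabla M\|_{L^2}^2$ budget via concentration at some point $y_i\in\TT^2$; this forces $\{T_1,\dots,T_N\}$ to be finite and yields the stated characterization of the singular points.

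For the uniqueness part, I would take two Struwe-like solutions sharing the same initial data on a common smoothness interval $(T_i,T_{i+1})$, write the difference system for $(\delta\uu,\delta F,\delta M)$, and estimate it in a functional class strictly weaker than the Struwe energy (typically a critical Besov space such as $\BB^0_{2,1}$). The problematic nonlinearity is the double cross product $M\wedge(M\wedge H_{\eff})$, which expands into the biharmonic-type interaction $M\wedge(M\wedge\Delta M)$ together with $|\nabla M|^2 M$: the extra derivative on $M$ is not available at the Struwe level, so a direct energy method fails. The remedy is Bony's decomposition: splitting each product into the two paraproducts and the symmetric remainder allows one to shift derivatives onto the smooth factor (exploiting the regularizing effect of $-\nu\Delta,-\kappa\Delta,-\Delta$) while the rough factor is controlled by the uniform $L^\infty_t L^2_x$ bound. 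The elastic stress $\Div(W'(F)F^\top)$ is linearized around its quadratic part $\chi F$ and the residual $W'(F)-\chi F$ is treated, via the $C^2$ bound on $W$, as a bounded paradifferential multiplier. A Gronwall argument applied to the resulting inequality
\begin{equation*}
\|(\delta\uu,\delta F,\delta M)(t)\|_{\mcX} \leq C\int_{T_i}^{t}\Phi(s)\,\|(\delta\uu,\delta F,\delta M)(s)\|_{\mcX}\,\dd s,
\end{equation*}
with $\Phi\in L^1_{\rm loc}(T_i,T_{i+1})$ furnished by the Struwe regularity, forces $(\delta\uu,\delta F,\delta M)\equiv 0$ on each interval; continuity in $L^2$ at $t=T_i$ then propagates uniqueness across the singular times.

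The principal obstacle is the uniqueness analysis. Combining the rather weak Struwe regularity with the bilinear-in-gradient structure of the LLG equation and the non-coercive elastic stress forces us to work exactly at the critical scale, where any logarithmic loss in the paraproduct estimates would be fatal to the Gronwall argument. Carrying out the paradifferential bookkeeping cleanly for the triple interaction $M\wedge(M\wedge\Delta M)$, while simultaneously keeping the elastic contribution controllable in $\mcX$, is where the bulk of the technical effort has to be spent.
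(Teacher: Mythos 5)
Your outline captures the overall architecture (Struwe-type concentration argument for existence, weak-norm energy estimates with paradifferential calculus for uniqueness), but the uniqueness part contains a step that would fail as you have set it up. You plan to close the argument with a plain Gronwall inequality and explicitly assert that ``any logarithmic loss in the paraproduct estimates would be fatal.'' In fact, a logarithmic loss is unavoidable here and is precisely what the paper has to live with: the elastic stress term $\langle \delta W'(F)F_1^\top,\nabla\delta\uu\rangle_{\Hh^{-1/2}}$ cannot be bounded linearly in the difference energy. The paper splits $F_1$ into low and high frequencies at a cutoff $N$ chosen as $N\sim\ln(1+1/\delta\EE(t))$, which produces a bound of the form $g(t)\,\delta\EE(t)\ln(1+1/\delta\EE(t))$ (Lemma~\ref{lemma:log-ineq}); the resulting differential inequality is then closed by the \emph{Osgood} lemma with modulus $\mu(r)=r(1+\ln(1+1/r))$, not by Gronwall. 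Your strategy of engineering the paraproduct bookkeeping so that no logarithm appears is not just technically demanding, it is the wrong target: you should instead accept the logarithm and upgrade Gronwall to Osgood.

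Two further points. First, your proposed difference space $\BB^0_{2,1}$ sits at regularity level zero; the paper argues that estimating the difference at level $s=0$ only yields weak--strong uniqueness, because the convection and stress nonlinearities cannot be closed there with only the Struwe regularity on \emph{both} solutions. One genuinely needs a negative index, $\dot H^{s}$ with $s\in(-1,0)$ (the paper takes $s=-1/2$), together with a careful separation of the zero Fourier modes of $\delta\uu$ and $\delta M$, since the homogeneous norms only see mean-free functions. Second, in the existence part your finiteness argument (``every singular time drains $\varepsilon_0$, hence finitely many'') is incomplete when the external field pumps energy in: you need the quantitative refinement \eqref{energyLawVariant}, namely that the energy gained between consecutive singular times $T_{i-1}<T_i$ is at most $K_E\sqrt{T_i-T_{i-1}}$ (obtained from an $L^2_tL^{4/3}_x$ bound on $\partial_t M$), so that an accumulation of singular times would force the energy to $-\infty$. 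Without this, an infinite sequence of singularities replenished by the Zeeman term is not excluded.
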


\noindent
\begin{remark}
    The above statement reflects an important aspect of each possible singularity of the solutions: for any singular time $T_i$ the irregularity is caused by the behavior of the  magnetization $M$. If $F$ and $M$ are ignored, the system reduces to the 2D-version of the Navier-Stokes equations, which produces no singularities in  the velocity field $\uu$. However, in system \eqref{main_equation}, the singularity formation in $M$ might also cause a blow-up in $\uu$. In fact, this was conjectured recently for the related Ericksen-Leslie system in \cite{lai}. 
\end{remark}

\begin{remark}
    From the arbitrariness of the life span $T>0$ both in system \eqref{main_equation} and in the above statement, we argue that any solution built by Theorem~\ref{main-thm1} can be uniquely extended to a weak solution which is defined globally in time. Then, the amount of singularities occurring in time can actually be infinite (but still discrete), with the family of singular times converging towards $+\infty$.  This can roughly be  explained by the presence of the external force in \eqref{main_equation} which keeps on feeding up the energy of the system, allowing the appearance of new singularities. Considering a finite time interval $(0,T)$ allows to preserve the typical characteristic of Struwe-like solutions, i.e.\ the finite amount of singular times.
\end{remark}

\noindent Concerning the existence result, the concept of Struwe-like solution was first introduced to address the Cauchy problem associated to the harmonic map heat flow defined on Riemannian surfaces. Struwe \cite{struwe} proved the existence and uniqueness of such solutions, in which a finite number of singularities can occur at most. The main principle that allows to develop this theory relies on the fact that whenever a singular point appears then a fixed ``loss of energy'' occurs. It is rather standard to assert that under a sufficient smallness condition on the energy of the system, a unique global-in-time classical solution exists. Hence, Struwe proved that under a sufficiently large amount of singular points in which a constant loss of energy occurs, the fluid enters into the energy level in which the solution becomes smooth.
An extension to the Ericksen-Leslie model driving the motion of liquid crystals in 2D was performed in \cite{hong, linlinwang}. Starting from these results, we extend here the technique of Struwe to System \eqref{main_equation}.

\noindent 
The existence being achieved, we will hence address the uniqueness of Struwe-like solutions for \eqref{main_equation}. The main difficulties associated with treating the uniqueness in Theorem~\ref{main-thm1} are related to the presence of the Navier-Stokes part as well as the presence of the magnetization equation for $M$. One can essentially think of the system as a highly non-trivial perturbation of a Navier-Stokes system. 

\noindent Even in 2D, the perturbation produced by the presence of the additional stress-tensor generates significant technical difficulties related in the first place to the weak norms available for the velocity field $\uu$ and the magnetization $M$. A rather common way of dealing with this issue is by using a weak norm for estimating the difference between the two weak solutions, a norm that is  below the natural spaces in which the weak solutions are defined (cf.\ \eqref{energyDifference}). This approach was used before in the context of a tensor model for the evolution of liquid crystals (cf.\  \cite{MR3599422,MR3576270}).

\noindent In our case, for technical convenience we  use a homogeneous Sobolev space, namely $\dot H^{-\frac{1}{2}}(\TT^2)$. The fact that the initial data for the difference is zero (i.e.\ $(\delta \uu,\,\delta F,\,\delta M)_{t=0}=(0,\,0,\,0)$, where $\delta$ denotes the difference of two states) helps in controlling the difference in this space with low regularity. However, one of the main reasons for choosing the homogeneous setting is a specific product law. The mentioned law states that the product is a bounded operator from $\Hh^{s_1}(\TT^2)\times \Hh^{s_2}(\TT^2)$ into $\Hh^{s_1+s_2-1}(\TT^2)$, for any $|s_1|,\,|s_2|\leq 1$ such that  $s_1+s_2$ is positive (cf.\ Lemma~\ref{lemma_product_homogeneous_sobolev_spaces}). We mention that evaluating the difference at regularity level $s=0$, i.e.\ in $L^2$, would only allow to prove a weak-strong uniqueness result. 
Working in a negative Sobolev space, $\dot H^s$ with $s\in (-1,0)$ allows to capture the uniqueness of Struwe-like solutions.  We expect that a similar proof would work in any $\dot H^s$ with $s\in (-1,0)$ and our choice $s=-\frac{1}{2}$ is just for convenience.
 
 \noindent
 Our main goal is to obtain a specific estimate that leads to the Osgood lemma, a generalization of the Gronwall inequality. Indeed the uniqueness reduces to an estimate of the following type:
\begin{equation*}
	\delta \EE(t) \leq \delta \EE(0) +\int_0^t f(s) \mu( \delta \EE(s))\dd s ,
\end{equation*}
where $\delta \EE(t)$ is a quantity controlling the difference between two solutions (cf.\ Proposition~\ref{prop:estEE})  and $f$ is a-priori in $L^1_{\rm loc}$, while $\mu(\cdot)$ is an Osgood modulus of continuity (cf.\ \eqref{modulus_of_continuity}).

\smallskip\noindent
Furthermore, there are some difficulties that are specific to our system. These are  of two different types, being related to:

\begin{itemize}
\item controlling the high nonlinearties related to the magnetization field $M$, in particular those that interact with $\uu$, such as 
\begin{equation*}
    ``u\cdot \nabla M"\; \text{in the LLG equation and}\; 
    ``\Div(\nabla M\odot \nabla M)"\;
    \text{in the momentum equation},
\end{equation*}
\item controlling the high derivatives of $M$, related to the unitary constraint, such as $``|\nabla M|^2M"$ in the LLG equation.
\end{itemize}

\smallskip
\noindent
The first difficulty is dealt with  by taking into account the specific feature of the coupling that allows for the cancellation of the ``worst" terms,  when considering certain physically meaningful combinations. This feature is explored in the next section where we perform the existence proof of Theorem~\ref{main-thm1} (cf.\ the energy inequality \eqref{energyInequality}). In what concerns the second difficulty, this is overcome by delicate harmonic analysis arguments which requires the periodic setting of the spatial domain. This type of analysis has been extensively used in several contexts and it is typical of Navier-Stokes type problems (cf.\ \cite{MR2290141,MR2038119,MR3824719}).

\smallskip
\noindent We finally present the structure of this paper: in Section~\ref{sec:existence} we address the existence of Struwe-like solutions for system \eqref{main_equation}, with some highlights about their behavior around singular points. Some of the extensive calculations for the existence proof are performed in Section~\ref{technicalcalculations1} in order to not interrupt the main argument. Section~\ref{app:LP} introduces some tools of harmonic analysis, such as a brief overview of the Littlewood-Paley theory, that will play a major role for the uniqueness part. Hence, in Section~\ref{sec:uniqueness} we perform a suitable energy estimate of Osgood type, that unlocks the uniqueness of Struwe-like solutions. In order to improve the readability, some further technical calculations are presented in Section~\ref{sec5}.


\section{Existence of weak solutions}\label{sec:existence}
\noindent
This section is devoted to the proof of the first statement of Theorem~\ref{main-thm1}, concerning the existence of Struwe-like solutions, as presented in Definition~\ref{def:weak-sol}. 
We first recall the statement we aim to prove. To this end we introduce the following simplified notation of the function spaces in which our solutions belong to 
\begin{align*}
	H(a,b):=  &\bigg\{ v: [a,b] \times \To \to \R^2: v \text{ measurable and } 
	 \esssup_{a\leq t\leq b} \int_\To |v|^2(t) + \int_a^b \int_\To |\nabla v|^2 < \infty  \bigg\},\\
	K(a,b):=  &\bigg\{ G: [a,b] \times \To \to \RR^{2 \times 2}: G \text{ measurable and }
	 \esssup_{a\leq t\leq b} \int_\To |G|^2(t) + \int_a^b \int_\To |\nabla G|^2 < \infty  \bigg\}
	\end{align*}
	and
	\begin{align*}
	V(a,b):= \bigg\{ M: [a,b] \times \To \to \Sp: & M \text{ measurable and
	}   \\
	& \esssup_{a\leq t\leq b} \int_\To |\nabla M|^2(t) + \int_a^b \int_\To |\nabla^2 M|^2
	+|\partial_t M|^2  < \infty \bigg\}.
\end{align*}
In what follows, generic constants are denoted by $C$, and $A\lesssim B$ means that $A\leq CB$. 
We can hence state our existence result in the following theorem.
\begin{theorem} \label{struweSolutions}
Let $(\uu_0,F_0,M_0) \in  L^2(\To) \times L^2(\To) \times
H^1(\To)$ be  given initial data satisfying $\Div \, \uu_0 = 0, \Div \, F_0^\top =0$ and $|M_0|=1$. Then there exists a weak solution $(\uu, F, M) : [0,T] \times \To \to \R^2 \times \RR^{2 \times 2} \times \Sp$ of \eqref{main_equation} with the above initial condition such that $(\uu,F) \in H(0,T) \times K(0,T)$ and 
$$ M \in \bigcup_{i=1}^N \bigcap_{\eps>0} V(T_{i-1}, T_i-\eps) \quad \cup \quad V(T_N,T)$$
for finitely many $0=T_0 < T_1<...<T_N$ in $[0,T)$. Moreover there are two constants $\varepsilon_0>0$ and $R_0>0$ such that for each $T_i>0$ there exists at least one singular point $y_i \in \To$ characterized by 
$$ 	\limsup_{t\nearrow T_i} \int_{B_R(y_i)}|\nabla M(t,x)|^2\dd x\geq \ee_0, $$
for any $R>0$ with $R\leq R_0$.
\end{theorem}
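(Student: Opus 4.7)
The plan is to adapt Struwe's strategy from \cite{struwe} to the present coupled system in four stages: (i) construction of local smooth approximate solutions, (ii) a global energy inequality, (iii) a local $\ee$-regularity criterion that controls the possible loss of regularity by a concentration of the local energy of $\nabla M$, and (iv) a restart/concatenation argument using the finiteness of the total energy. First I would regularise \eqref{main_equation} by replacing the pointwise constraint $|M|=1$ with the Ginzburg-Landau penalisation \eqref{GL-pen} and smoothing the initial data; for each fixed $\ee>0$, a local-in-time classical solution can be built by a standard Banach fixed-point argument in a space of the form $C([0,T^\ast];H^s)\cap L^2(0,T^\ast;H^{s+1})$ with $s>1$, exploiting the parabolic smoothing of $-\Delta$ in each of the three evolution equations. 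Along these classical solutions, testing $\eqref{main_equation}_1$ by $\uu$, $\eqref{main_equation}_3$ by $W'(F)$ (using convexity of $W$), and $\eqref{main_equation}_5$ by $H_\eff$, the cross-terms
\begin{equation*}
\int \uu\cdot\Div(\nabla M\odot\nabla M),\quad \int (\uu\cdot\nabla M)\cdot H_\eff,\quad \int \nabla\uu\,F:F,\quad \int \uu\cdot\Div(W'(F)F^\top)
\end{equation*}
cancel pairwise and produce a dissipative global estimate which, together with Gronwall applied to the Zeeman forcing, yields
\begin{equation*}
\sup_{t\in[0,T]}\bigl(\|\uu(t)\|_{L^2}^2+\|F(t)\|_{L^2}^2+\|\nabla M(t)\|_{L^2}^2\bigr)+\int_0^T\bigl(\nu\|\nabla\uu\|_{L^2}^2+\kappa\|\nabla F\|_{L^2}^2+\|H_\eff\|_{L^2}^2\bigr)\;\lesssim\;\EE_0(T),
\end{equation*}
uniformly in $\ee$. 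Passing to the limit $\ee\to 0$ by standard compactness arguments gives a weak solution with $(\uu,F)\in H(0,T)\times K(0,T)$ and $M\in L^\infty(0,T;H^1)$.

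The heart of the argument is an $\ee$-regularity lemma in the spirit of \cite{hong,linlinwang}. I would prove the existence of absolute constants $\ee_0,R_0>0$ such that whenever
\begin{equation*}
\sup_{x_0\in\To}\int_{B_R(x_0)}|\nabla M(t,x)|^2\dd x\leq\ee_0\qquad\text{for all }t\in[t_0,t_1],\ R\leq R_0,
\end{equation*}
one has, on $[t_0,t_1]$, the improved regularity $M\in L^\infty(H^1)\cap L^2(H^2)$ together with $\partial_t M\in L^2(L^2)$ and $\uu,F$ in the strong classes with bounds depending only on the energy at $t_0$. The proof tests $\eqref{main_equation}_5$ by $-\Delta M$, rewrites $M\wedge(M\wedge H_\eff)=-H_\eff+(M\cdot H_\eff)M$ so as to turn the LLG dissipation into a coercive term $\|H_\eff\|_{L^2}^2$, and combines this with the energy estimates for the momentum and deformation equations. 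The quartic contribution $\int|\nabla M|^4$ is then controlled via a localised Ladyzhenskaya inequality of the form
\begin{equation*}
\int_{\To}|\nabla M|^4\;\lesssim\;\Bigl(\sup_{x_0\in\To}\int_{B_R(x_0)}|\nabla M|^2\Bigr)\bigl(\|\nabla^2 M\|_{L^2}^2+R^{-2}\|\nabla M\|_{L^2}^2\bigr),
\end{equation*}
so that for $\ee_0$ small enough this term is absorbed into the parabolic left-hand side.

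Third, I would implement the bubbling step. Define
\begin{equation*}
T^\ast:=\sup\Bigl\{\tau\leq T:\ \sup_{t\in[0,\tau]}\sup_{x_0\in\To}\int_{B_{R_0}(x_0)}|\nabla M(t,x)|^2\dd x\leq\ee_0\Bigr\}.
\end{equation*}
On $[0,T^\ast)$ the previous lemma grants the Struwe regularity class; if $T^\ast\geq T$ we are done with $N=0$. Otherwise, by continuity of $t\mapsto\int_{B_R(y)}|\nabla M|^2$ (a consequence of $M\in C([0,T^\ast);H^1)$, itself deduced from $\partial_t M\in L^2(L^2)$ on compact subintervals), the threshold is saturated at $T_1=T^\ast$ at some point $y_1\in\To$, which is the required singular point. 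Taking the weak-$\ast$ limit as $t\nearrow T_1$ produces a new admissible initial datum $(\uu(T_1),F(T_1),M(T_1))$ in $L^2\times L^2\times H^1$, and a Fatou/lower-semicontinuity argument at $y_1$ shows that the total Dirichlet energy of $M$ drops by at least $\ee_0$ across $T_1$. Restarting the construction on each interval $[T_i,T_{i+1})$ yields at most $N\leq\EE_0(T)/\ee_0$ singular times, which is the claimed finite sequence $0=T_0<T_1<\dots<T_N\leq T$.

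The main obstacle I expect lies in the $\ee$-regularity lemma. On the one hand, the magnetic transport term $(\uu\cdot\nabla)M$ and the Maxwell-type stress $\Div(\nabla M\odot\nabla M)$ must be absorbed simultaneously through the cancellation in the combined estimate of $\|\nabla M\|_{L^2}^2+\|\uu\|_{L^2}^2$, much as in \cite{linlinwang}; treating them separately would force a term like $\|\uu\|_{L^4}\|\nabla M\|_{L^4}\|\nabla^2 M\|_{L^2}$ which cannot be closed using only smallness of the local $\nabla M$-energy. On the other hand, the stretching term $\nabla\uu\,F$ in $\eqref{main_equation}_3$ cancels modulo lower-order contributions against $\Div(W'(F)F^\top)$ in $\eqref{main_equation}_1$ provided one tests with the variational derivative $W'(F)$ rather than with $F$; this is where convexity of $W$ becomes essential, since it ensures that $W'(F):\nabla F$ reduces to $\nabla W(F)$, rendering the elastic contribution dissipative. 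Once these two structural cancellations are in place, the remaining pieces (including the forcing by $H_{\rm ext}$ and the anisotropy potential $\psi$) are of lower order and can be absorbed via Ladyzhenskaya in two space dimensions together with the smallness of $\ee_0$.
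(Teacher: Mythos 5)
Your overall architecture (local smooth solutions, global energy law, an $\ee$-regularity criterion via a localised Ladyzhenskaya inequality, bubbling with a quantised energy loss, and a counting argument for the finiteness of singular times) is exactly the strategy of the paper, and the structural cancellations you identify — $(\uu\cdot\nabla)M$ against $\Div(\nabla M\odot\nabla M)$, and $\nabla\uu\,F$ against $\Div(W'(F)F^\top)$ upon testing with $W'(F)$ — are the right ones. However, your choice of approximation scheme contains a genuine gap. You propose to relax the constraint $|M|=1$ by the Ginzburg--Landau penalisation \eqref{GL-pen} and then to "pass to the limit $\ee\to 0$ by standard compactness arguments." This limit is not standard: the penalisation replaces the Lagrange multiplier $|\nabla M|^2M$ by $-(|M_\ee|^2-1)M_\ee/\ee^2$, and identifying the weak limit of this term with the correct multiplier requires strong $L^2$ convergence of $\nabla M_\ee$, which is exactly what fails at points of energy concentration — the very phenomenon the theorem is about. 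The paper deliberately avoids this route (it keeps the genuine unitary constraint throughout) by regularising only the \emph{initial data} and invoking the constraint-preserving local strong-solution theory of \cite{kalousek} (Lemma~\ref{strongSolutions}); the approximation parameter is then the smoothness of the data, and the constraint $|M^m|=1$ holds exactly along the whole approximating sequence.

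Two further points deserve attention. First, the order of your steps matters: the $\ee$-regularity lemma must be applied to the \emph{approximate} solutions in order to produce a lifespan $T^*$ that is uniform in the approximation parameter; for this one needs a quantitative local energy inequality (the analogue of Lemma~\ref{localEnergyEstimate}) showing that the local energy $\int_{B_R(x_0)}(|\uu|^2+|F|^2+|\nabla M|^2)$ cannot exceed $\ee_1$ before a time computable from $R$ and the data. This is the technically heaviest part of the existence proof (it requires Calder\'on--Zygmund bounds for the pressure and a separate $L^2_t\dot H^1_x$ bound for $F$ obtained by testing with $F$ itself, since convexity of $W$ gives only $\nabla F:\nabla W'(F)\ge 0$ and no coercivity), and your continuity-in-time argument alone does not supply it. Second, your count $N\le \mathcal{E}_0(T)/\ee_0$ ignores that the Zeeman forcing makes the energy non-monotone; one needs the refined estimate \eqref{energyLawVariant}, which bounds the energy gain on $[t_1,t_2]$ by $K_E\sqrt{t_2-t_1}$, so that the net balance $2E_n\le 2E_0-n\ee_1+K_E\sum_i\sqrt{T_i-T_{i-1}}$ still forces $n$ to be finite. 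These last two items are repairable along the lines of the paper, but the Ginzburg--Landau limit is not.
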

\begin{remark}
    The above statement requires a minor adjustment, we avoided for the sake of presentation: if $T$ itself is a time in which a singularity of $M$ occurs, then we slightly increase its value to ensure that $M$ is smooth in $T$.
\end{remark}

\noindent We start by presenting a brief overview of the strategy we  perform throughout this section.
\begin{enumerate}[1.]
    \item As first step, we show the existence of classical solutions that are defined locally-in-time. More precisely we prove the existence of a suitable time $T^*\in (0, T)$, for which a weak solution of \eqref{main_equation} exists within the following functional framework:   \begin{equation*}
        \uu,\,F \in \mathcal{C}([0,T^*]; L^2(\TT^2))\cap L^2(0, T^*; H^1(\TT^2)) \qquad 
        M\in \mathcal{C}([0,T^*]; H^1(\TT^2)\cap L^2(0, T^*; H^2(\TT^2)).
    \end{equation*}
    In particular, no singularity occurs within the time interval $(0,T^*)$. 
    \item We secondly extend the mentioned solution until a first singularity occurs at a time $T_1>0$ in $(T^*, T)$. In addition, we show that at the time $T_1$ the system ``looses'' a fixed amount of energy.
    \item We finally extend our solution on the right-hand side of the singularity and we recursively perform as in the previous points. We conclude the recursive procedure showing that no accumulation point is reached by the set of singular times, hence our solution can be smoothly extended to the entire interval $(0,T)$, with the exception of a finite amount of times in which a singularity can occur.
\end{enumerate}
The first part of the above strategy is developed in a rather standard manner: we begin with regularizing the initial data through a sequence $(\uu^m_0,\,F^m_0,\,M^m_0)_{m\in \NN}$ in $H^1(\TT^2)\times H^1(\TT^2)\times H^2(\TT^2)$ such that 
\begin{equation*}
    \uu^m_0\to\uu_0\quad\text{in}\quad L^2(\TT^2),\qquad F^m_0\to F_0\quad\text{in}\quad L^2(\TT^2),\qquad
    M^m_0\to M_0 \quad\text{in}\quad H^1(\TT^2).
\end{equation*}
Hence, we claim that a standard application of a fixed-point theorem (cf.\ Lemma~\ref{strongSolutions} and also \cite{MR3763923}) implies that there exists a time $T^m>0$ and a unique local-in-time classical solution $(\uu^m,\,F^m,\,M^m)$ of \eqref{main_equation} with
\begin{equation*}
    (\uu^m ,\,F^m)\in \mathcal{C}([0,T^m], H^1(\TT^2))\cap L^2(0,T^m; H^2(\TT^2)),\qquad
    M^m\in \mathcal{C}([0,T^m], H^2(\TT^2))\cap L^2(0, T^m; H^3(\TT^2)).
\end{equation*}
We hence proceed to prove that there exists a time $T^*\in (0,T)$, that provides a lower bound for an unbounded family of the lifespans $T^m>0$ of the approximate solutions,i.e.\ $T^*<T^m$, for any $m\in \NN$, up to a subsequence. To this end we perform below some suitable a-priori estimates whose calculations are justified since $(\uu^m,F^m,M^m)_{m\in\NN}$ is a strong solution. 

\noindent 

\noindent
The proof of Theorem~\ref{struweSolutions} follows the strategy used in \cite{hong} and \cite{struwe}, when treating the well-posedness of the harmonic map heat flow and the Ericksen-Leslie system for liquid crystals, respectively. 

\noindent
We define two constants depending on the initial data and the external force, describing suitable behaviours of the energy of the system
$$ E_0:= \frac12\int_\To |\uu_0|^2 + 2 W(F_0) + |\nabla M_0|^2 + 2\psi(M_0), \quad K(H_\ext):=K_T(H_\ext): = 2\mu_0 ( \no{H_\ext}{L^\infty} + 
T \no{\partial_t H_\ext}{L^\infty}).$$
Next, we recall the basic energy law that the system satisfies. Indeed, multiplying the momentum equation of \eqref{main_equation} by $\uu$, the equation of the deformation tensor by $W'(F)$, the magnetization equation by $-H_\eff$ and integrating over $[0,t] \times \To$, we obtain
\begin{align*} 
\begin{split}
	\int_\To |\uu(t)|^2 &+ 2W(F)(t) + |\nabla M(t)|^2 + 2\psi(M)(t) -2 \mu_0 (M\cdot H_\ext)(t)\\ 
	&+ 2 \int_0^t \int_\To \nu  |\nabla \uu|^2 + \kappa \nabla F : \nabla W'(F)
	+ H_\eff^2 -|M \cdot H_\eff|^2  \\
	 & = \int_\To |\uu_0|^2 + 2W(F_0) + |\nabla M_0|^2 + 2\psi(M_0) - 2 \mu_0  M_0 \cdot H_\ext(0)  -2\mu_0 \int_0^t \int_\To  M \cdot
	\partial_t H_\ext .
	 \end{split}
\end{align*}
We remark the presence of dissipative terms in the second line of the above identity. For what concerns the velocity field $\uu$, the dissipation unlocks a suitable bound for $\nabla \uu$ as in the classical case of the Navier-Stokes equations, while for the magnetization $M$ a more elaborate analysis is required. For the deformation tensor $F$, the term involving $\nabla F$ just satisfies $\nabla F: \nabla W'(F)\geq 0$ since $W$ is assumed convex, leading to the following energy inequality
\begin{align} \label{energyInequality}
\begin{split}
	\int_\To |\uu(t)|^2 &+ 2W(F)(t)+ |\nabla M(t)|^2 + 2 \psi(M)(t) + 2 \int_0^t \int_\To \nu  |\nabla \uu|^2 
	 + H_\eff^2 -|M \cdot H_\eff|^2   \\
	 &\leq 2 E_0 + K(H_\ext).
	 \end{split}
\end{align}
For the forthcoming analysis we nevertheless need a bound for $\nabla F$ in a suitable Lebesgue space, a feature which is not provided by our last estimate. Hence, in order to compensate the lack of dissipation on $\nabla F$, we test the equation for the deformation tensor directly by $F$ and use Ladyzhenskaya's inequality to gather
\begin{align*}
    \frac12 \frac{\dd }{\dd t} \int_\To  |F|^2(t) + \int_\To \kappa |\nabla F|^2& \leq 
    \int_\To |\nabla \uu||F|^2 
    \leq 
    \| \nabla \uu \|_{L^2}
    \| F          \|_{L^4}^2
    \leq 
    \| \nabla \uu \|_{L^2}
    \| F          \|_{L^2}
    \|\nabla F    \|_{L^2}
    \\ 
    &\leq \frac{\kappa}{2} \no{\nabla F}{L^2}^2 + C \no{\nabla \uu}{L^2}^2 \int_\To |F|^2(t).
\end{align*}
Thus, a rather standard application of Gronwall's inequality leads to 
\begin{equation}\label{estimate-nablaF-L}
\int_0^t \int_\To |\nabla F|^2 \leq \| F_0 \|_{L^2}^2 \exp \Big\{ C\int_0^t \| \nabla \uu(s) \|_{L^2}^2\dd s\Big\}\leq 2 E_0 \exp( CE_0)=: L,    
\end{equation}
for any $0\leq t \leq T$.

\noindent
In contrast to Struwe-like solutions for harmonic map flow, we need to deal with the exterior force in terms of the external magnetic field $H_{\rm ext}$. In particular, additional energy is fed into the system and the energy functional might increase in time. This implies that estimate \eqref{energyInequality} is too weak to rule out the possibility of infinitely many singularities appearing in time. Because of that, we seek for additional bounds related in primis to the magnetization field $M$. We hence derive the following  estimate from \eqref{main_equation} and \eqref{energyInequality}:
\begin{align*}
	\no{\partial_t M }{L^2(0,T; L^\frac43)} \lesssim \no{(\uu \cdot \nabla ) M}{L^4(0,T; L^\frac43)} + 
	 \no{ -M \wedge (M \wedge H_\eff) }{L^2(0,T;L^2)} \lesssim 2E_0 + K(H_\ext),
\end{align*}
which implies the following bound of the total energy of the system:
\begin{align} \label{energyLawVariant}
\begin{split}
	\int_\To |\uu(t_2)|^2 &+ 2W(F)(t_2) + |\nabla M(t_2)|^2 + 2 \psi(M)(t_2) + 2 \int_{t_1}^{t_2} \int_\To \nu  |\nabla \uu|^2  + H_\eff^2 -|M \cdot H_\eff|^2   \\
	 & = \int_\To |\uu(t_1)|^2 + 2W(F)(t_1) + |\nabla M(t_1)|^2 + 2\psi(M)(t_2) +  2\mu_0 \int_{t_1}^{t_2} \int_\To  \partial_t M \cdot
	 H_\ext \\
	 & \leq \int_\To |\uu(t_1)|^2 + 2W(F)(t_1) + |\nabla M(t_1)|^2 + 2\psi(M)(t_1)  + K_E \sqrt{t_2-t_1}
	 \end{split}
\end{align}
for all $0\leq t_1\leq t_2\leq T$ with $K_E = K_E( E_0, H_\ext, T)$. Estimate \eqref{energyLawVariant}
plays a major role to show that only finitely many singularities of the solution appear at most. To this end, the dependence of $K_E$ on just the data and $T$ will become crucial. 

\noindent
We are now in the position to address the strategy introduced at the beginning of this section. We recall that $(\uu^m,\, F^m,\, M^m)_{m\in\NN}$ stands for a sequence of approximate solutions, whose existence is given by the following lemma. 
\begin{lemma}[\!\!\cite{kalousek}, Theorem 2.4] \label{strongSolutions}
For given initial data $(\uu_0^m,F_0^m,M_0^m) \in H^1(\To) \times H^1(\To) \times H^2(\To)$ there exists a $T^m>0$ 
and a unique strong solution  $(\uu^m,\pre^m,F^m,M^m): [0,T^m] \times \To \to \R^2 \times \R \times \R^{2\times 2} \times \Sp$ of \eqref{main_equation} such that
\begin{align*}
(\uu^m,F^m,M^m) & \in \Cc([0,T^m];H^1(\To) \times H^1(\To) \times H^2(\To)) \,\,\,\,\cap \,\,\,\, L^2(0,T^m; H^2(\To) \times H^2(\To) \times H^3(\To)) ,\\
\pre^m  & \in L^2 (0,T^m;H^1(\To)).
\end{align*}
\end{lemma}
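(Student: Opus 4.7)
The plan is to construct the strong solution $(u^m, F^m, M^m)$ via a Banach fixed-point argument applied to a decoupled linearization of system \eqref{main_equation}. On the interval $[0,T^m]\times \To$ with $T^m>0$ to be determined, I introduce the complete metric space
\[
 \mc{X}_{T^m} := \Big\{(u,F,M) : u,F\in \Cc([0,T^m];H^1(\To))\cap L^2(0,T^m;H^2(\To)),\ M\in \Cc([0,T^m];H^2(\To))\cap L^2(0,T^m;H^3(\To))\Big\}
\]
and, given $(\tilde u,\tilde F,\tilde M)$ in a closed ball $B_R\subset \mc{X}_{T^m}$ centered at the stationary extension of the initial data, I define $\Phi(\tilde u,\tilde F,\tilde M):=(u,F,M)$ as the unique solution of three decoupled linear parabolic problems obtained by keeping the dissipative operators on the left-hand side and freezing every other nonlinear factor at the tilded argument: a non-stationary Stokes system for $u$ (with the nonlinear and Maxwell-type stresses as a source), a convection-diffusion equation for $F$, and a parabolic equation for $M$ derived from the LLG right-hand side.

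The bulk of the proof is a quantitative comparison in $\mc{X}_{T^m}$. Standard maximal regularity for $\partial_t-\Delta$ on $\To$, combined with the two-dimensional Ladyzhenskaya inequality $\|v\|_{L^4}^2\lesssim \|v\|_{L^2}\|\nabla v\|_{L^2}$ and the growth hypotheses imposed on $W$, $W'$, $W''$ and $\psi$, should yield a bound of the form
\[
 \|\Phi(\tilde u,\tilde F,\tilde M)\|_{\mc{X}_{T^m}} \leq C_0\,\|(u_0^m,F_0^m,M_0^m)\|_{H^1\times H^1\times H^2} + C\,(T^m)^{\alpha}\,P(R)
\]
for some exponent $\alpha>0$ and polynomial $P$, so that choosing $R$ first and $T^m$ afterwards delivers the self-map property $\Phi(B_R)\subset B_R$. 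A parallel estimate on the difference $\Phi(\tilde u_1,\tilde F_1,\tilde M_1)-\Phi(\tilde u_2,\tilde F_2,\tilde M_2)$, carried out in a slightly weaker topology (e.g.\ with the regularity of each variable lowered by one derivative) and upgraded by interpolation with the uniform bound just obtained, will provide strict contraction after a further shrinking of $T^m$. Banach's fixed-point theorem then produces a unique $(u^m,F^m,M^m)$, and the pressure $\pre^m\in L^2(0,T^m;H^1(\To))$ is recovered from the Stokes problem by elliptic regularity.

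The main obstacle is the pointwise unitary constraint $|M^m|\equiv 1$, which is essential for the algebraic structure of the LLG equation and is not preserved by a generic element of $B_R$. My preferred handling is to exploit the identity
\[
 \tilde M\wedge\bigl(\tilde M\wedge \Delta \tilde M\bigr) = (\tilde M\cdot \Delta \tilde M)\,\tilde M - |\tilde M|^2\,\Delta \tilde M,
\]
which, under $|\tilde M|=1$, recasts the right-hand side of the LLG equation as $\Delta M+|\nabla \tilde M|^2\tilde M$ plus terms that are algebraically lower order. The supercritical factor $|\nabla \tilde M|^2$ is precisely at the regularity level where the 2D Ladyzhenskaya gain $\|\nabla \tilde M\|_{L^4}^2\lesssim \|\nabla \tilde M\|_{L^2}\|\nabla^2\tilde M\|_{L^2}$ closes the estimate at the $H^2$ level. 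To maintain $|\tilde M|=1$ throughout the iteration I would either compose $\Phi$ with the renormalization $M\mapsto M/|M|$ (legitimate because the embedding $\mc{X}_{T^m}\hookrightarrow \Cc([0,T^m]\times\To)$ in dimension two keeps $|M|$ uniformly close to $1$ on short time intervals) or verify the constraint only at the limit, by testing the evolution of $|M^m|^2-1$ against itself and closing by Gronwall, thanks to the vector identities $M^m\cdot(M^m\wedge X)=0$ that make $\partial_t|M^m|^2$ vanish modulo errors proportional to $|M^m|^2-1$.
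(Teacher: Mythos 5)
The paper does not prove this lemma itself: it is imported verbatim from \cite{kalousek}, Theorem~2.4, with only the remark that the absence of a boundary makes the periodic case easier than the bounded-domain setting of that reference. Your Banach fixed-point outline --- linearize around a frozen iterate, use maximal parabolic regularity together with the two-dimensional Ladyzhenskaya inequality for the self-map and the contraction (the latter in a weaker norm, upgraded by interpolation), recover $\pre^m$ from the Stokes problem, and restore $|M^m|\equiv 1$ a posteriori via the Gronwall argument on $|M^m|^2-1$ after rewriting $-M\wedge M\wedge\Delta M=\Delta M+|\nabla M|^2M$ --- is precisely the ``standard application of a fixed-point theorem'' that the paper alludes to and that the cited reference carries out, so your approach is essentially the intended one.
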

\noindent 
We clarify that Theorem 2.4 in \cite{kalousek} is stated within the framework of a general bounded domain in $\R^2$, where boundary conditions are further considered. However this result holds also for the periodic setting $\TT^2$, since the lack of a boundary simplifies somehow the overall problem.

\noindent
Without loss of generality, we can assume that $T^m$ is the lifespan of each approximate solution. 
Regarding \eqref{main_equation}, the limit passage of some of the nonlinear terms involving $M$ is made possible by the control of the second gradient of $M$. To this end, the following lemma shows that the loss of smoothness of our approximate solutions is characterized by the blow-up of the $L^2$-norm of $\nabla^2 M$.
\begin{lemma}[Blow-up criterion] \label{blowupLemma}
Suppose that $(\uu^m,F^m,M^m)$ is a strong solution to \eqref{main_equation}. Then the following inequality holds true: 
\begin{align}	\label{blowupCriterion}
\begin{split}
 \frac{\dd}{\dd t} &\left( \no{ \nabla \uu^m}{L^2}^2 + \no{ \nabla F^m}{L^2}^2 +\no{ \Delta M^m}{L^2}^2 \right)(t)  + 
\left( \nu \no{\Delta \uu^m}{L^2}^2 +  \kappa \no{\Delta F^m}{L^2}^2 +   \no{ \nabla \Delta M^m}{L^2}^2 \right) \\
 &\leq C\left(1 + \no{\uu^m}{L^2}^2 + \no{F^m}{L^2}^2 + \no{\nabla M^m}{L^2}^2 \right) \left(1+ \no{\nabla \uu^m}{L^2}^2 + \no{\nabla F^m}{L^2}^2 +
\no{\Delta M^m}{L^2}^2 \right)^2,
\end{split}
\end{align}
for a suitable constant $C$ that does not depend on the index $m$. In particular, the loss of regularity of the solution at the lifespan $T^m$ is characterized by 
$$\lim_{t \nearrow T^m} \int_0^{t} \no{\Delta M^m}{L^2}^2(s) \dd s = + \infty.$$
\end{lemma}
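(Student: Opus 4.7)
The plan is to derive (\ref{blowupCriterion}) through simultaneous second-order energy estimates on the momentum, deformation, and LLG equations, and then to deduce the characterization of the lifespan $T^m$ via a Gronwall-type argument combined with the local existence statement of Lemma~\ref{strongSolutions}. Since $(u^m, F^m, M^m)$ is a strong solution with the regularity asserted by that lemma, every formal manipulation below is rigorously justified.

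First, I test the momentum equation against $-\Delta u^m$, the deformation equation against $-\Delta F^m$, and apply $-\Delta$ to the LLG equation before pairing with $-\Delta M^m$ (equivalently, testing the LLG equation directly with $\Delta^2 M^m$). Integration by parts together with the divergence-free constraints $\Div u^m=0$ and $\Div (F^m)^\top=0$ produce the three dissipative quantities $\nu\|\Delta u^m\|_{L^2}^2$, $\kappa\|\Delta F^m\|_{L^2}^2$, and $\|\nabla\Delta M^m\|_{L^2}^2$ on the left-hand side. When the three identities are summed, the worst coupling contributions between $u^m$ and $M^m$ (namely those arising from $-\Div(\nabla M^m\odot\nabla M^m)$ in the momentum equation against the convection $u^m\cdot\nabla M^m$ in the LLG equation) cancel, in the same spirit as the cancellations underlying (\ref{energyInequality}).

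The remaining nonlinearities are estimated with the two-dimensional Ladyzhenskaya inequality $\|f\|_{L^4}^2\lesssim\|f\|_{L^2}\|\nabla f\|_{L^2}$ and Gagliardo--Nirenberg interpolation; the top-order dissipative contributions thereby produced are absorbed back into the left-hand side by Young's inequality. The most delicate terms come from the LLG equation, notably the cubic $|\nabla M^m|^2 M^m$ emerging from $-M^m\wedge M^m\wedge \Delta M^m$, whose control requires repeated use of the unit constraint $|M^m|=1$ and its differential consequences $M^m\cdot\nabla M^m=0$ and $M^m\cdot\Delta M^m=-|\nabla M^m|^2$. The terms involving $W'(F^m)$ and $W''(F^m)$ are handled through the first-order growth and boundedness assumptions on $W$, while the $\psi'(M^m)$ and $H_{\ext}$ contributions are of lower order. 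Collecting all these bounds yields (\ref{blowupCriterion}) with a constant $C$ independent of $m$.

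To derive the blow-up criterion, suppose toward a contradiction that $\limsup_{t\nearrow T^m}\int_0^{t}\|\Delta M^m(s)\|_{L^2}^2\,\mathrm{d}s<\infty$. By (\ref{energyInequality}) and (\ref{estimate-nablaF-L}) the prefactor $(1+\|u^m\|_{L^2}^2+\|F^m\|_{L^2}^2+\|\nabla M^m\|_{L^2}^2)$ in (\ref{blowupCriterion}) is already bounded uniformly on $[0,T^m)$. A more careful redistribution of the cubic terms in the estimates above allows one to replace one of the factors $(1+Y^m)$ on the right-hand side by a time-integrable weight involving $\|\Delta M^m\|_{L^2}^2$ and the dissipative norms; setting $Y^m:=\|\nabla u^m\|_{L^2}^2+\|\nabla F^m\|_{L^2}^2+\|\Delta M^m\|_{L^2}^2$, this converts (\ref{blowupCriterion}) into a linear Gronwall inequality $\dot Y^m\leq g(t)(1+Y^m)$ with $g\in L^1(0,T^m)$, whence $\sup_{t<T^m}Y^m(t)<\infty$. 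Lemma~\ref{strongSolutions} can then be invoked at an initial time close to $T^m$ to extend the strong solution beyond $T^m$, contradicting the maximality of $T^m$. The principal obstacle throughout is organizing the many cubic nonlinearities, in particular those coming from the LLG equation, so as to produce precisely the polynomial right-hand side of (\ref{blowupCriterion}) while still exposing the factor $\|\Delta M^m\|_{L^2}^2$ needed for the Gronwall step.
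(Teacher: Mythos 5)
Your proposal is correct and follows essentially the same route as the paper: testing against $-\Delta \uu^m$, $-\Delta F^m$ and $\Delta^2 M^m$, exploiting the cancellation of the $\int(\Delta \uu^m\cdot\nabla)M^m\cdot\Delta M^m$ terms between the momentum and LLG equations, closing the estimates with Ladyzhenskaya/Gagliardo--Nirenberg and Young, and concluding by Gronwall. The only cosmetic remark is that no ``redistribution of cubic terms'' is needed for the Gronwall step: under the contradiction hypothesis one of the two factors $(1+\|\nabla\uu^m\|_{L^2}^2+\|\nabla F^m\|_{L^2}^2+\|\Delta M^m\|_{L^2}^2)$ is already in $L^1(0,T^m)$ thanks to \eqref{energyInequality}, \eqref{estimate-nablaF-L} and the assumed finiteness of $\int\|\Delta M^m\|_{L^2}^2$, so \eqref{blowupCriterion} is directly a linear Gronwall inequality.
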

\begin{proof}
    This follows from Lemma~\ref{lemmaBlowUp2} in Section~\ref{technicalcalculations1}.
\end{proof}
\noindent Because of Lemma~\ref{blowupLemma}, which we will prove in the following section, the overall goal is to gain a bound on the critical $L^2$-norm
of $\Delta M^m$. We do so first introducing the following central lemma :
\begin{lemma}[\!\!\cite{struwe}, Lemma $3.1$] \label{lemmaStruwe}
There exists a constant $C_1$ such that for any $T>0$, $f \in H(0,T)$ and any $R>0$, 
\begin{align*}
\int_{[0,T] \times \To} |f|^4 \phi \leq C_1 
\left(\underset{0\leq t \leq T, x \in \To}{\esssup} \int_{B_R(x)}
 |f(t)|^2  \right) \left( \int_{[0,T] \times \To} |\nabla f|^2 \phi  + R^{-2} \int_{[0,T] \times \To} |f|^2 \phi \right)
\end{align*}
holds true for every $ \phi \in C_0^\infty(B_R(x))$ with $\phi(y)= \widetilde{\phi}(|y-x|)$ and $\widetilde{\phi}$ nonincreasing.
\end{lemma}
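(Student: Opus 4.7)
The plan is to follow Struwe's classical argument: a pointwise-in-time application of the two-dimensional Ladyzhenskaya-Gagliardo-Nirenberg inequality, combined with a layer-cake decomposition of the radial weight $\phi$. The time-uniformity on the right-hand side will follow by replacing the instantaneous local $L^2$-mass of $f$ with its essential supremum $S := \esssup_{t,y} \int_{B_R(y)} |f(t)|^2$.

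Fix $t \in [0,T]$ and write $f$ for $f(t,\cdot)$. Since $\widetilde{\phi}$ is nonincreasing, every super-level set $\{\phi > s\}$ is an open ball $B_{\rho(s)}(x)$ with $\rho(s) \in [0,R]$, and Cavalieri's identity reads
\[
\int_{\To} |f|^4 \phi \, dy \,=\, \int_0^{\|\phi\|_{L^\infty}} \int_{B_{\rho(s)}(x)} |f|^4 \, dy \, ds.
\]
On each concentric ball $B_\rho(x)$ with $\rho \leq R$, I would apply the 2D Ladyzhenskaya inequality rescaled from the unit ball via $y \mapsto \rho y$, namely
\[
\int_{B_\rho} |f|^4 \,\leq\, C\, \|f\|_{L^2(B_\rho)}^2 \bigl( \|\nabla f\|_{L^2(B_\rho)}^2 + \rho^{-2} \|f\|_{L^2(B_\rho)}^2 \bigr).
\]
Bounding $\|f\|_{L^2(B_\rho)}^2 \leq S$ and integrating in $s$, the reverse Cavalieri identity $\int_0^{\|\phi\|_{L^\infty}} \|\nabla f\|_{L^2(B_{\rho(s)})}^2 \, ds = \int |\nabla f|^2 \phi$ immediately produces the first term on the right-hand side of the desired estimate, multiplied by the correct factor $S$.

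The key obstacle is the $\rho(s)^{-2}$-weighted remainder, since $\rho(s)^{-2}$ can exceed $R^{-2}$ by an arbitrarily large factor when $\rho(s)$ is small. To absorb this into the stated $C R^{-2} \int |f|^2 \phi$, I would change variables to $r = \rho(s)$ (so $ds = -\widetilde{\phi}'(r)\,dr$ with $-\widetilde{\phi}' \geq 0$) and integrate by parts against the monotone profile $r \mapsto \|f\|_{L^2(B_r)}^2$; the boundary terms vanish thanks to $\widetilde{\phi}(R) = 0$ and the local $L^2$-behaviour of $f$ at the origin, while the remaining interior integral is controlled via the radial identity $\int_0^R \|f\|_{L^2(B_r)}^2 (-\widetilde{\phi}'(r))\,dr = \int |f|^2 \phi$. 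Finally, integrating the pointwise estimate in $t \in [0,T]$ and upgrading the instantaneous local $L^2$-sup to $S$ yields the claimed inequality.
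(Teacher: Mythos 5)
The paper does not actually prove this lemma (it is quoted from \cite{struwe}), so your attempt is judged against the standard argument rather than against a proof in the text. Your first half is sound: the Cavalieri decomposition $\int_{\To}|f|^4\phi=\int_0^{\widetilde\phi(0)}\int_{B_{\rho(s)}(x)}|f|^4\,\dd s$ and the rescaled Ladyzhenskaya inequality on each ball are correct, and the gradient term does reassemble into $S\int|\nabla f|^2\phi$ exactly as you describe. The gap is the absorption of the $\rho(s)^{-2}$ remainder, and it is not reparable by the integration by parts you sketch. After the change of variables you must show $\int_0^R r^{-2}m_t(r)(-\widetilde\phi'(r))\,\dd r\lesssim R^{-2}\int_0^R m_t(r)(-\widetilde\phi'(r))\,\dd r$, where $m_t(r)=\int_{B_r(x)}|f(t)|^2$. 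This is false: for $f\equiv 1$ one has $m_t(r)=\pi r^2$, the left-hand side equals $\pi\,\widetilde\phi(0)$, while the right-hand side equals $CR^{-2}\int\phi$, which is at most $C\pi\epsilon^2R^{-2}\widetilde\phi(0)$ once $\phi$ is supported in $B_\epsilon(x)$ with $\epsilon\ll R$. The two quantities are genuinely incomparable, so no boundary-term bookkeeping can recover the estimate; note that for $f\equiv 1$ the lemma itself still holds, so the loss occurs strictly inside your argument, at the point where you bound by $S$ the factor multiplying $\rho^{-2}\|f\|^2_{L^2(B_\rho)}$ on balls with $\rho\ll R$.

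The underlying obstruction is that the per-level inequality $\int_{B_\rho}|f|^4\leq CS\bigl(\int_{B_\rho}|\nabla f|^2+R^{-2}\int_{B_\rho}|f|^2\bigr)$ is false for $\rho\ll R$ (take $f$ constant on $B_\rho$ with all of its Dirichlet energy just outside $B_\rho$), so no strategy that estimates each super-level set of $\phi$ separately can succeed. The factor $R^{-2}$, as opposed to $\rho^{-2}$, must come from cutoffs living at scale $R$: the classical proof covers the relevant region by balls of radius comparable to $R$ with bounded overlap, applies the two-dimensional Gagliardo--Nirenberg inequality to the products $f\varphi_i$ with $|\nabla\varphi_i|\lesssim R^{-1}$, and sums; this is also how Corollary~\ref{corollaryStruwe} is obtained. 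Be aware, finally, that your difficulty reflects an imprecision in the statement itself: read literally (arbitrary radial nonincreasing $\phi\in C_0^\infty(B_R(x))$, possibly concentrated on a much smaller ball), the inequality fails for the $f$ just described, since then $\int|\nabla f|^2\phi=0$ and the surviving right-hand side carries a factor $\rho^2/R^2$. The lemma is only ever applied in the paper with $\phi\equiv 1$ on $B_R$, $\supp\phi\subset B_{2R}$ and $|\nabla\phi|\lesssim R^{-1}$, and a correct proof has to exploit that $\phi$ is a cutoff at scale $R$.
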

\noindent 
A partition of unity argument hence entails the following corollary. 
\begin{cor} \label{corollaryStruwe}
There exists a constant $C_1$ such that for any $f \in H(0,T)$ and any $R>0$,
\begin{align*}
\int_{[0,T] \times \To} |f|^4 \leq C_1 
\left(\underset{0\leq t \leq T, x \in \To}{\esssup} \int_{B_R(x)}
 |f(t)|^2  \right) \left( \int_{[0,T] \times \To} |\nabla f|^2 + R^{-2} \int_{[0,T] \times \To} |f|^2 \right)
\end{align*}
holds true.
\end{cor}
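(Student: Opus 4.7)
The plan is to deduce the global bound from Lemma~\ref{lemmaStruwe} by a radial partition-of-unity argument. First I would fix $R>0$ (with $R$ smaller than a geometric constant associated to $\TT^2$; the complementary case $R\gtrsim \mathrm{diam}(\TT^2)$ is trivial, since $B_R(x)$ exhausts the torus). Selecting a maximal $R/2$-separated set of points $\{x_i\}_{i=1}^N\subset\TT^2$, a standard packing argument gives $N\lesssim R^{-2}$ together with the covering property $\TT^2=\bigcup_i B_{R/2}(x_i)$. For each $i$, I would pick $\phi_i\in C_0^\infty(B_R(x_i))$ of the form $\phi_i(y)=\widetilde\phi_i(|y-x_i|)$ with $\widetilde\phi_i$ non-increasing, $0\leq\phi_i\leq 1$ everywhere and $\phi_i\equiv 1$ on $B_{R/2}(x_i)$. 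This family is exactly of the type that Lemma~\ref{lemmaStruwe} accepts.

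The two properties of $\{\phi_i\}$ that drive the argument are the pointwise lower bound
\begin{equation*}
    \sum_{i=1}^N \phi_i(y) \,\geq\, 1 \qquad \text{for every } y\in\TT^2,
\end{equation*}
which follows from the covering property, and the finite-overlap bound
\begin{equation*}
    \sum_{i=1}^N \phi_i(y) \,\leq\, C \qquad \text{for every } y\in\TT^2,
\end{equation*}
with $C$ independent of $R$, which follows from the fact that an $R/2$-separated set in $\RR^2$ has at most a uniformly bounded number of points in any ball of radius $R$. The uniformity in $R$ in this second bound is the crucial point that allows me to reach a constant $C_1$ in the corollary that does not depend on $R$ (despite $N$ blowing up as $R\to 0$).

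Given these cutoffs, the lower bound yields $|f|^4\leq \sum_i |f|^4\phi_i$ pointwise, so that applying Lemma~\ref{lemmaStruwe} to each summand gives
\begin{equation*}
    \int_{[0,T]\times\TT^2} |f|^4
    \,\leq\, C_1 \Big(\esssup_{0\leq t\leq T,\,x\in\TT^2}\int_{B_R(x)}|f(t)|^2\Big)\sum_{i=1}^N\Big(\int_{[0,T]\times\TT^2}|\nabla f|^2\phi_i + R^{-2}\int_{[0,T]\times\TT^2}|f|^2\phi_i\Big).
\end{equation*}
Interchanging the sum and integral and invoking the finite-overlap bound, the factor $\sum_i\phi_i$ is absorbed into a universal constant, producing the desired inequality with $C_1$ independent of $R$. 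The main (and essentially only) subtlety to check carefully is this uniformity in $R$, which is why the finite-overlap property is delicate and must be verified for the specific radial cutoffs chosen, rather than replaced by any generic partition of unity.
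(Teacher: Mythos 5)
Your argument is correct and is precisely the partition-of-unity argument the paper invokes (the paper states only that ``a partition of unity argument entails the corollary'' and omits the details): covering $\TT^2$ by balls $B_{R/2}(x_i)$ from an $R/2$-separated net, using radial non-increasing cutoffs admissible for Lemma~\ref{lemmaStruwe}, and exploiting the $R$-independent finite-overlap bound to absorb $\sum_i\phi_i$ into a universal constant. Nothing is missing.
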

\noindent
Corollary~\ref{corollaryStruwe} unlocks a criterion for a bound on $\no{\Delta M}{L^2}$, in terms of a smallness condition on the equi-integrability of $\nabla M$, as expressed in the following lemma. 
\begin{lemma}		\label{lemmaSmall1}
Let $\widetilde T>0$ be a general positive time and $(\uu,F,M) \in H(0,\widetilde T)\times K(0,\widetilde T) \times V(0,\widetilde T)$ be a weak solution of \eqref{main_equation}. Then there exists a constant $\varepsilon_1>0$ such that if $$ \underset{0\leq t\leq \widetilde T, x \in \To}{\esssup} \int_{B_R(x)} |\nabla M(t)|^2 < \varepsilon_1$$ for a suitable $R>0$, then  the following estimate holds true:
$$\int_{[0,\widetilde T]\times \To} |\nabla \uu|^2  + |\Delta M|^2  \leq C \left[ (1+\widetilde TR^{-2})
(2E_0 + K(H_\ext) ) + \no{H_\ext}{L^2L^2}^2 + \widetilde T\right].$$
\end{lemma}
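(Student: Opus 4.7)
The plan is to combine the basic energy inequality \eqref{energyInequality} with a geometric decomposition of $\Delta M$ induced by the unitary constraint $|M|=1$, and then absorb the resulting critical term $\int |\nabla M|^4$ via Corollary~\ref{corollaryStruwe}, exploiting the smallness hypothesis on $\varepsilon_1$.

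Differentiating $|M|^2\equiv 1$ twice gives $M\cdot\Delta M = -|\nabla M|^2$, so that $\Delta M$ decomposes orthogonally as $\Delta M = -|\nabla M|^2 M + P_M^\perp \Delta M$, where $P_M^\perp X := X - (M\cdot X)M$. In particular $|\Delta M|^2 = |\nabla M|^4 + |P_M^\perp \Delta M|^2$. From the identity $H_\eff = \Delta M + \mu_0 H_\ext -\psi'(M)$, applying $P_M^\perp$ and using that $|P_M^\perp X|^2\leq |X|^2$ together with $|\psi'(M)|\leq \sigma$ uniformly on $\mathbb{S}^2$ yields the pointwise estimate
\begin{equation*}
    |P_M^\perp \Delta M|^2 \lesssim |H_\eff|^2 - (M\cdot H_\eff)^2 + |H_\ext|^2 + 1.
\end{equation*}

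Integrating on $[0,\widetilde T]\times \TT^2$ and using \eqref{energyInequality} (dropping the non-negative terms on its left-hand side) gives
\begin{equation*}
    \int_0^{\widetilde T}\int_{\TT^2} \nu|\nabla \uu|^2 + \left(|H_\eff|^2 - (M\cdot H_\eff)^2\right) \,\leq\, E_0 + \tfrac12 K(H_\ext),
\end{equation*}
and consequently
\begin{equation*}
    \int_0^{\widetilde T}\int_{\TT^2} |\nabla \uu|^2 + |\Delta M|^2 \,\lesssim\, 2E_0 + K(H_\ext) + \no{H_\ext}{L^2L^2}^2 + \widetilde T + \int_0^{\widetilde T}\int_{\TT^2} |\nabla M|^4.
\end{equation*}
The quartic term is handled by Corollary~\ref{corollaryStruwe} applied to $f = \nabla M \in H(0,\widetilde T)$, combined with the identity $\int|\nabla^2 M|^2 = \int|\Delta M|^2$ on $\TT^2$ and the a priori bound $\sup_t \no{\nabla M(t)}{L^2}^2 \leq 2E_0 + K(H_\ext)$ coming from \eqref{energyInequality}:
\begin{equation*}
    \int_0^{\widetilde T}\int_{\TT^2} |\nabla M|^4 \,\leq\, C_1\varepsilon_1 \int_0^{\widetilde T}\int_{\TT^2} |\Delta M|^2 + C_1 \varepsilon_1 R^{-2} \widetilde T\, (2E_0 + K(H_\ext)).
\end{equation*}
Choosing $\varepsilon_1$ small enough so that $C_1\varepsilon_1 \leq \tfrac12$ lets me absorb $\int|\Delta M|^2$ into the left-hand side, producing the claimed estimate.

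The main obstacle is that the natural LLG dissipation controls only the component of $H_\eff$ perpendicular to $M$, namely $|H_\eff|^2 - (M\cdot H_\eff)^2$, rather than $\Delta M$ itself. The orthogonal decomposition above is precisely what converts this tangential dissipation into a full $L^2$-bound on $\Delta M$, at the price of producing the critical term $\int|\nabla M|^4$; that term is exactly what Struwe's covering inequality in Corollary~\ref{corollaryStruwe} is designed to absorb under a small-energy-concentration hypothesis on $\nabla M$.
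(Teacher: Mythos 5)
Your proof is correct and follows essentially the same route as the paper: the pointwise inequality you derive via the orthogonal projection $P_M^\perp$ is exactly the paper's expansion of $H_\eff^2-(M\cdot H_\eff)^2 \geq \tfrac12|\Delta M|^2 - C|\nabla M|^4 - C|H_\ext|^2 - C\sigma^2$, merely packaged through the constraint identity $M\cdot\Delta M=-|\nabla M|^2$. The subsequent absorption of $\int|\nabla M|^4$ via Corollary~\ref{corollaryStruwe} with $\varepsilon_1\sim 1/C_1$ and the energy inequality \eqref{energyInequality} is identical to the paper's argument.
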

\begin{proof} We first remark that the following identity concerning the dissipation of the magnetization field holds true:
\begin{align*}
    H_\eff^2 - (M \cdot H_\eff)^2 =&    |\Delta M |^2 - |\nabla M|^4  + \mu_0^2 H_\ext^2 - \mu_0 (M\cdot H_\ext)^2 
+ 2 \mu_0 [ \Delta M \cdot H_\ext - (M \cdot \Delta M) ( M\cdot H_\ext) ] \\
&+ \psi'(M)^2- (\psi'(M)\cdot M )^2  - 2\Delta M \cdot \psi'(M) - 2\mu_0 H_\ext \cdot \psi'(M)  \\
&+ 2(\Delta M \cdot M) \cdot (M \cdot \psi'(M) )  + 2\mu_0 (H_\ext \cdot M) ( \psi'(M) \cdot  M )\\
\geq & \frac{1}{2}|\Delta M|^2-C|\nabla M|^4 - C\mu_0^2 |H_\ext|^2 -C\sigma^2.
\end{align*}
Thus, we combine the above inequality with \eqref{energyInequality} to gather 
\begin{align*}
	\int_{[0,\widetilde T]\times \To} \nu |\nabla \uu|^2 + |\nabla^2 M|^2  
	& \leq 2 \int_{[0,\widetilde T]\times \To}  \nu |\nabla \uu|^2  + 2\Big(H_\eff^2 - (M \cdot H_\eff)^2\Big) + 2|\nabla M|^4 + C\Big(|H_\ext|^2+\sigma^2\Big)  \\
	&\leq 2( 2 E_0 + K(H_\ext) ) +  \int_{[0,\widetilde T]\times \To} \widetilde C\Big(  |H_\ext|^2 + \sigma^2\Big) +  2|\nabla M|^4 .
\end{align*}
Now we use Corollary~\ref{corollaryStruwe} for $f= \nabla M$, set $\varepsilon_1= \frac{1}{4C_1}$
and employ \eqref{energyInequality} to arrive at the  assertion.
\end{proof}
\noindent 
Lemma~\ref{lemmaSmall1} becomes useful if one is able to uniformly control the exchange energy $|\nabla M|^2(t)$ on any suitable ball $B_R(x)$ of fixed radius $R$, i.e.\ if $\{ \nabla M(t) \}_{t\geq0}$ is equi-integrable. In general, this is not the case since the best control we can achieve from weak solutions is the following energy estimate 
$$ \esssup_{t \in [0,\widetilde T]} \int_\To |\nabla M(t)|^2 \leq C,$$
which yields a bound in the entire domain $\TT^2$. 
Nevertheless, to better understand the main feature presented by this criterion, we first analyse how the local energy term evolves in time.
\begin{lemma} \label{localEnergyEstimate}
Let $\widetilde T>0$ be a general positive time and $(\uu,F,M) \in H(0,\widetilde T) \times K(0,\widetilde T) \times V(0,\widetilde T)$ be a solution to \eqref{main_equation}
with initial condition $(\uu_0,F_0,M_0) \in L^2(\To) \times L^2(\To) \times H^1(\To)$. Then there exist constants $\varepsilon_1>0$ and $R>0$ such that if $$ \underset{0\leq t\leq \widetilde T, x \in \To}{\esssup} \int_{B_{2R}(x)} |\nabla M(t)|^2 < \varepsilon_1$$
then for any $t\in (0, \widetilde T)$, for any $x_0\in \TT^2$ and for any $R>0$,
\begin{align*}
\int_{B_R(x_0)}  &\Big( |\uu|^2(t) + \chi|F|^2(t) + |\nabla M|^2(t)  \Big)
\\
\leq  &
\left[\int_{B_{2R}(x_0)} \Big( |\uu_0|^2 + \chi |F_0|^2 + |\nabla M_0|^2\Big)\right]  + C_1 (1+t)R^2 + C_2 t \Big(1+\frac{1}{R^2}\Big) + 
 C_3 \frac{t^\frac{1}{3}}{R^\frac{2}{3}} \left( 1+ t\Big( 1+\frac{1}{R^2}\Big) \right)^\frac23,
\end{align*}
where $C_1$, $C_2$ and $C_3$ are three positive constants, that depend only on $\no{H_\ext}{W^{1,\infty}}$, $E_0$ and $K(H_\ext) = K_{\widetilde T}(H_\ext)=2\mu_0 ( \no{H_\ext}{L^\infty} + 
\widetilde T \no{\partial_t H_\ext}{L^\infty})$.
\end{lemma}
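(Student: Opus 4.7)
The plan is to mimic, in localized form, the derivation of the global energy identity \eqref{energyInequality}. I would fix $x_0 \in \TT^2$, $R>0$, and choose a radially symmetric, radially nonincreasing cutoff $\phi \in C_c^\infty(B_{2R}(x_0))$ with $\phi \equiv 1$ on $B_R(x_0)$, $0 \leq \phi \leq 1$, $|\nabla \phi| \lesssim R^{-1}$ and $|\nabla^2 \phi| \lesssim R^{-2}$, so that $\phi$ meets the monotonicity hypothesis of Lemma~\ref{lemmaStruwe}. Then I would test the momentum equation in \eqref{main_equation} against $\uu\phi^2$, the deformation equation against $\chi F\phi^2$ (the constant $\chi$ being the leading coefficient of $W'$ from the growth assumption on $W$), and the LLG equation against $-H_\eff\phi^2$, then sum the three identities. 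The coupling terms $\Div(\nabla M \odot \nabla M)\cdot \uu$, $\Div(W'(F)F^\top)\cdot \uu$, the transports $\uu\cdot\nabla F$ and $\uu\cdot\nabla M$, and the gyromagnetic torque $\mu_0 \nabla^\top H_\ext M \cdot \uu$ cancel pairwise exactly as in the derivation of \eqref{energyInequality}, modulo commutators in which derivatives fall on $\phi^2$.

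What survives on the left-hand side is $\tfrac{d}{dt}\int(|\uu|^2 + \chi|F|^2 + |\nabla M|^2)\phi^2$ together with the dissipation $\int(\nu|\nabla \uu|^2 + \chi\kappa|\nabla F|^2 + |H_\eff|^2 - |M\cdot H_\eff|^2)\phi^2$. The right-hand side will collect three families of terms: cubic commutators of schematic form $R^{-1}\int |\uu|(|\uu|^2+\chi|F|^2+|\nabla M|^2)\phi$; a pressure commutator which by Calder\'on--Zygmund theory is majorized by the velocity cubic term; and lower-order source contributions from $H_\ext$, $\psi'(M)$ and from the bounded remainder $W'(F)-\chi F$. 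This last group contributes quantities of order $R^2$ after integrating bounded densities over the support of $\phi^2$, which accounts for $C_1(1+t)R^2$ in the statement. The quartic term $|\nabla M|^4$ hidden inside $|H_\eff|^2 - |M\cdot H_\eff|^2$, as already exploited in the proof of Lemma~\ref{lemmaSmall1}, will be absorbed into $\tfrac12|\Delta M|^2\phi^2$ via Lemma~\ref{lemmaStruwe} applied to $f=\nabla M$ together with the smallness assumption on $\int_{B_{2R}(x)}|\nabla M|^2$.

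The decisive obstacle is the cubic commutator $R^{-1}\int_0^t\!\!\int|\uu||\nabla M|^2\phi$. By H\"older's inequality and the localized Ladyzhenskaya-type bound of Lemma~\ref{lemmaStruwe} (applied both to $f=\uu$ and $f=\nabla M$ on scale $R$), the hypothesis $\sup_{t,x}\int_{B_{2R}(x)}|\nabla M|^2 < \varepsilon_1$ is precisely what one needs in order to absorb this contribution into the dissipation on the left, provided $\varepsilon_1$ is chosen small enough; the analogous cubic contribution in $F$ will be handled identically, using \eqref{estimate-nablaF-L} to interpolate $\|F\|_{L^4}$. The remaining terms will then be integrated in time against the global bounds \eqref{energyInequality}--\eqref{energyLawVariant}: H\"older in time with exponents $(3,3/2)$ on the residual part of $R^{-1}\int|\uu||\nabla M|^2\phi$ produces exactly the factor $C_3\, t^{1/3}R^{-2/3}\bigl(1+t(1+R^{-2})\bigr)^{2/3}$ appearing in the statement, the exponent $2/3$ being the signature of Lemma~\ref{lemmaStruwe} composed with the dissipation bound. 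Combining the linear source terms in $H_\ext$ with the $R^{-2}$-sized residues from Lemma~\ref{lemmaStruwe} will yield $C_2 t(1+R^{-2})$. Restricting the resulting estimate to $B_R(x_0)$, where $\phi\equiv 1$, closes the proof. The hard point is genuinely the absorption of the cubic $\uu$-$\nabla M$ term: without the equi-integrability smallness at scale $2R$, this term is supercritical and no such local inequality can hold.
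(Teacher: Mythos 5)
Your overall architecture coincides with the paper's: a radial cutoff $\phi$ at scale $R$, the localized energy identity obtained by testing with $\uu\phi^2$, $\chi F\phi^2$ and a multiple of the tension field for $M$ (the paper uses $-(\Delta M+|\nabla M|^2M)\phi^2$ rather than $-H_\eff\phi^2$, a harmless difference), cancellation of the coupling terms up to commutators where derivatives hit $\phi$, absorption of the quartic and cubic commutators via Lemma~\ref{lemmaStruwe} and the smallness hypothesis, and the $W'(F)-\chi F$ remainder handled by its boundedness. Up to that point your plan is sound and matches the paper.

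The genuine gap is in the pressure term, which you dismiss as ``majorized by the velocity cubic term by Calder\'on--Zygmund theory'' and which you then do not need when you attribute the factor $C_3\,t^{1/3}R^{-2/3}\bigl(1+t(1+R^{-2})\bigr)^{2/3}$ to a H\"older-in-time step on $R^{-1}\int|\uu||\nabla M|^2\phi$. This is not how the estimate closes. The pressure $\pre=(-\Delta)^{-1}\Div\Div\bigl(W'(F)F^\top-\nabla M\odot\nabla M-\uu\otimes\uu\bigr)$ is nonlocal, so the commutator $\int_0^t\int|\pre||\uu||\phi||\nabla\phi|$ cannot be bounded by local quantities on $B_{2R}(x_0)$; in particular it is not controlled by $R^{-1}\int|\uu|^3\phi$. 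The paper splits it by Young's inequality into $\delta\int|\uu|^4\phi^2$ plus $C\delta^{-1}\int_0^t\int_{B_{2R}}|\pre|^{4/3}|\phi|^{2/3}|\nabla\phi|^{4/3}$, and the second piece is estimated by H\"older over the small support of $\nabla\phi$ against the \emph{global} space--time bound $\int_0^t\int_{\To}|\pre|^2\lesssim\int_0^t\int_{\To}(|\uu|^4+|F|^4+|\nabla M|^4)$; it is exactly this step that produces $t^{1/3}R^{-2/3}(\,\cdot\,)^{2/3}$. Moreover, bounding that global quartic integral requires the global dissipation bounds on $\nabla\uu$, $\nabla F$ and $\Delta M$, i.e.\ Lemma~\ref{lemmaSmall1} together with \eqref{estimate-nablaF-L} — so the smallness hypothesis enters the pressure estimate through Lemma~\ref{lemmaSmall1}, not only through the absorption of the local cubic commutator. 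As written, your argument has no mechanism to control the pressure contribution, and the claimed origin of the $t^{1/3}R^{-2/3}$ term does not survive scrutiny; you should restore the $|\pre|^{4/3}$ splitting and the global $L^2$ pressure bound to close the proof.
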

\begin{proof}
Let $\phi \in C_0^\infty(B_{2R}(x_0))$ be a cut-off function with $ \phi \equiv 1$ on $B_R(x_0)$ and $|\nabla \phi|\lesssim \frac{1}{R}, |\nabla^2 \phi |\lesssim \frac{1}{R^2}$ for all $R \leq R_0$. Using the local energy inequality provided by Lemma~\ref{localEnergy} of the next section, i.e.
\begin{align*}
\int_\To &\Big(|\uu(t)|^2 + \chi |F(t)|^2 +|\nabla M(t)|^2\Big) \phi^2 + \int_0^t \int_\To \Big(\nu|\nabla \uu|^2 + \kappa\chi |\nabla F|^2 + |\Delta M +|\nabla M|^2M|^2\Big) \phi^2  \\
\leq& \int_\To \Big(|\uu_0|^2+ \chi |F_0|^2 +|\nabla M_0|^2\Big) \phi^2 
+C\bigg\{  \int_0^t \int_\To \Big( |\uu|^2 + |F|^2 +|\nabla M|^2 +|\pre|\Big) |\uu ||\phi ||\nabla \phi| \\ 
 &+ \int_0^t \int_\To \Big( |\uu|^2 +|F|^2 + |\nabla M|^2 \Big) \Big(|\nabla \phi|^2 + |\phi||\nabla^2 \phi|\Big)  +  \int_0^t \int_\To |\nabla \uu ||F||\chi  F- W'(F) | \phi^2 \\ 
&+\int_0^t \int_\To |\uu|^2\phi^2 
+\int_0^t \int_\To \Big(|H_\ext|^2 +|\nabla H_\ext|^2+ |\psi'(M)|^2\Big)\phi^2\bigg\},
\end{align*}
we have that
\begin{align}
\int_{B_R(x_0)} & \Big( |\uu|^2(t) + \chi |F|^2(t) + |\nabla M|^2(t) \Big)  + \int_0^t \int_\To \Big(
\nu |\nabla \uu |^2 +\kappa \chi |\nabla F|^2 + |\Delta M + |\nabla M|^2 M|^2  \Big) \phi^2 \notag \\
\leq &\int_{B_{2R}(x_0)} \Big( |\uu_0|^2 + \chi |F_0|^2 + |\nabla M_0|^2  \Big)  \notag 
+ C \int_0^t \int_\To \Big( |\uu|^2 + |F|^2  + |\nabla M|^2  +|\pre| \Big)|\uu||\phi||\nabla \phi|\notag \\
&+ C\frac{t}{R^2} \Big(E_0 + K(H_\ext)+1\Big) +C\int_0^t \int_\To |\nabla \uu ||F||\chi  F- W'(F) | \phi^2+
   C_1 (1+t)R^2
  , \label{aux1}
\end{align}
for a constant $C_1$ depending on $H_\ext$ and $\psi'$ and some positive constant $C$. Hence, we proceed estimating any terms on the right-hand side.

\noindent We now introduce a small parameter $\delta >0$ that will be determined at the end of the proof. First we recall that $|W'(A) - \chi A|\leq C_2$ for any $A\in \mathbb{R}^{2\times 2}$ and a positive constant $C_2$. Hence we remark that   
\begin{align*}
     \int_0^t \int_\To |\nabla \uu| |F | |\chi F - W'(F) | \phi^2
     \leq 
     C_2\int_0^t \int_\To |\nabla \uu| |F |  \phi^2
     & \leq \delta \int_0^t \int_\To |\nabla \uu|^2 \phi^2 + \frac{C}{\delta} \int_0^t \int_\To \chi  |F|^2 \phi^2.
\end{align*}
Next,
thanks to Lemma~\ref{lemmaStruwe}, we obtain that
\begin{equation}\label{aux3}
\begin{aligned}
	\int_0^t \int_\To &\Big(|\uu|^2 + |F|^2 + |\nabla M|^2 \Big) |\uu||\phi||\nabla \phi |  \\
	\leq &\delta \int_0^t \int_\To \Big(|\uu|^4 + |F|^4 + |\nabla M|^4 \Big) \phi^2 + \frac{C}{\delta} \int_0^t \int_\To |\uu|^2 |\nabla  \phi|^2    \\
	\leq & C\delta  \underset{0\leq s\leq \widetilde T, x \in \To}{\esssup}
	\Big(
	    \int_{B_R(x)} |\uu(s)|^2 + \chi|F(s)|^2 + |\nabla M(s)|^2
	\Big)
	\bigg\{
	\int_0^t \int_\To \Big(|\nabla \uu|^2 + \chi |\nabla F|^2  + |\nabla^2 M|^2\Big) \phi^2 
	\\
	&+\frac{1}{R^2}
	\int_0^t \int_\To \Big(| \uu|^2 + \chi | F|^2  + |\nabla  M|^2\Big) \phi^2 
	\bigg\}
	+ \frac{C}{\delta} \frac{t}{R^2} \Big(E_0 + K(H_\ext) +1 \Big)
	\\
	\leq & 
	C \delta(E_0 + K(H_\ext) +1) \left[ \int_0^t \int_\To 
	\Big(|\nabla \uu|^2 + \chi |\nabla F|^2\Big)\phi^2\right] +
	C\frac{\delta }{R^2} t \Big(E_0 + K(H_\ext) +1 \Big)^2  \\ 
	& +C \delta \Big(E_0 + K(H_\ext) +1 \Big)
	\left[
	    \int_0^t \int_\To   |\nabla^2 M|^2  \phi^2
	\right]+ \frac{C}{\delta} \frac{t}{R^2} \Big(E_0 + K(H_\ext) +1 \Big),
\end{aligned}
\end{equation}
where we have applied \eqref{energyInequality}, as well as the assumptions on $W(F)$ in the introduction of this manuscript.
For the pressure $\pre$ we first note that 
$$  \pre = (-\Delta)^{-1} \Div \Div (W'(F) F^\top - \nabla M \odot \nabla M - \uu \otimes \uu)$$
and therefore, thanks to Lemma~\ref{lemmaSmall1},
\begin{align*} 
        &\int_0^t\int_{\To}|\pre|^2 
        \leq C \int_0^t\int_{\To}
        \Big( |\uu|^4 +|F|^4 +|\nabla M|^4\Big) \\
        &\leq C  \underset{0\leq s\leq \widetilde T, x \in \To}{\esssup}
        \Big( \int_0^t\int_{B_{2R}(x)}|\uu(s)|^2 +|F(s)|^2 +|\nabla M(s)|^2 \Big)
        \Big( \int_0^t\int_{\To}  |\nabla \uu|^2 +|\nabla F|^2 +|\nabla^2 M|^2
        \\&\hspace{10cm}+ 
            \frac{1}{R^2}\int_{\To} |\uu|^2 +|F|^2 +|\nabla M|^2\Big) \\
        & \leq 
        C \Big(E_0 + K_{\widetilde T}(H_\ext) + 1\Big) \Big(L+ \Big(1+ \frac{1}{R^2}\Big)
\Big(2E_0 + K_{\widetilde T}(H_\ext) \Big) +\\ 
&\hspace{7.5cm}+\no{H_\ext}{L^2L^2}^2 + t +
            \frac{t}{R^2}\Big(E_0 + K_{\widetilde T}(H_\ext) + 1\Big) \Big) \\
        & \leq 
        C \Big(E_0 + K_{\widetilde T}(H_\ext) +L+ 1\Big)^2 \Big( 1 + t\Big(1+ \frac{1}{R^2}\Big)\Big),
\end{align*}
where we have used the estimate $\| \nabla F \|_{L^2((0,t)\times \To)}\leq L $ of \eqref{estimate-nablaF-L}. Thus, we deduce that
\begin{align*} \notag
\int_0^t  &\int_\To |\pre||\uu||\phi||\nabla \phi| \leq \delta \int_0^t \int_\To |\uu|^4 \phi^2 + \frac{C}{\delta} \int_0^t \int_{B_{2R}(x_0)} 
|\pre|^\frac43 |\phi|^\frac23 |\nabla \phi |^\frac43 
\\ 
 & \leq C \delta
 \underset{0\leq s\leq \widetilde T, x \in \To}{\esssup} \Big( \int_{B_{2R}(x)}|\uu|^2\Big)
 \Big(
    \int_0^t \int_\To |\nabla \uu|^2 \phi^2 +\frac{1}{R^2}\int_0^t \int_\To |\uu|^2 \phi^2  
 \Big)
 +
 \frac{C}{\delta}\int_0^t \int_{B_{2R}(x_0)} |\pre|^\frac43 |\phi|^\frac23 |\nabla \phi |^\frac43
\\
& \leq C(E_0+K_{\widetilde T}(H_\ext)+1) \delta \int_0^t \int_\To \left( |\nabla \uu|^2 |\phi|^2 + \frac{|\uu|^2}{R^2} \right)  + \frac{C t^\frac13}{\delta R^\frac23} \left(
\int_0^t \int_{B_{2R}(x_0)} |\pre|^2 \right)^\frac23\\
& \leq C \delta \int_0^t \int_\To \left( |\nabla \uu|^2 |\phi|^2 + \frac{|\uu|^2}{R^2} \right)  + 
C\frac{t^\frac13}{R^\frac23} \left( 1+ t\Big( 1+\frac{1}{R^2}\Big) \right)^\frac23 \Big( E_0 + K_{\widetilde T}(H_\ext) +L+1\Big)^\frac43.
\end{align*}
A control on $\nabla^2 M $ in terms of $\Delta M$ is given by
$$ \int_\To |\nabla^2 M|\phi^2 \leq \int_\To |\Delta M|^2 \phi^2 + 4 \int_\To |\nabla^2 M| |\nabla M| |\phi||\nabla \phi|.$$
From Lemma~\ref{lemmaStruwe} it follows that 
\begin{align*}
\int_0^t \int_\To |\nabla^2 M|^2 \phi^2 &  \leq  C \int_0^t \int_\To \Big(\Big(|\Delta M + |\nabla M|^2 M |^2  + |\nabla M|^4\Big) \phi^2 + 
|\nabla M |^2 |\nabla \phi|^2 \Big) \\
 & \leq C \left( \varepsilon_1  \int_0^t \int_\To |\nabla^2 M |\phi^2  + \frac{1}{R^2} \int_0^t \int_\To |\nabla M |^2 (1+ \phi^2) + \int_0^s \int_\To |\Delta M + |\nabla M|^2 M|^2 \phi^2 \right),
\end{align*}
i.e., 
\begin{align}
\int_0^t \int_\To |\nabla^2 M|^2 \phi^2 \leq C \int_0^t \int_\To |\Delta M + |\nabla M|^2 M|^2 + \frac{Ct}{R^2} 
\Big(2E_0 + K(H_\ext)\Big) \label{aux5} .
\end{align}
Finally, for small enough $\delta$, we combine the previous inequalities to deduce that
\begin{align*}
&\int_{B_R(x_0)} |\uu|^2(t) + \chi |F|^2(t) + |\nabla M|^2(t)   
 \leq \int_{B_{2R}(x_0)} \Big( |\uu_0|^2 + \chi |F_0|^2 + |\nabla M_0|^2  \Big) +\\  
 &+ C \Big(E_0 + K_{\widetilde T}(H_\ext) +1 \Big)^2t \Big( 1 +\frac{1}{R^2}\Big)  +
 C\frac{t^\frac13}{R^\frac23} \left( 1+ t\Big( 1+\frac{1}{R^2}\Big) \right)^\frac23 \Big( E_0 + K_{\widetilde T}(H_\ext) +L+1\Big)^\frac43\!\!\!\!+
C_1 (1+t)R^2.
\end{align*}
Defining the constants
\begin{equation*}
    C_2 = C  (E_0 + K_{\widetilde T}(H_\ext) +1 )^2,\qquad 
    C_3 = C( E_0 + K_{\widetilde T}(H_\ext) +L+1)^\frac43,
\end{equation*}
we conclude the proof of the lemma.
\end{proof}
\begin{remark}
    We assert that Lemma~\ref{localEnergyEstimate} can be extended to the case of 
    an external magnetic field $H_\ext $ in $ H^1((0,T)\times \To)$. The choice of a more regular external field $H_\ext$ in $W^{1,\infty}$ has been made for the sake of clear presentation, since in this framework we can explicitly control the local energy of the system  within the ball $B_{2R}(x_0)$ with respect to   the radius $R>0$ the time $t\in (0,T)$ and the initial data $(\uu_0,F_0,M_0)$. 
\end{remark}

\noindent 
Under the previous considerations, we are finally in the position to address the proof of Theorem~\ref{struweSolutions}.

\begin{proof}[Proof of Theorem~\ref{struweSolutions}]
Let $(\uu_0,F_0,M_0) \in L^2(\To) \times L^2(\To) \times H^1(\To)$ be an initial datum satisfying $|M_0|=1$, $\Div \, \uu_0=0$ and $\Div_0 \;F^\top = 0$. As depicted at the beginning of this section, we consider a sequence $(\uu_0^m, F_0^m, M_0^m)_m \subset  H^1(\To) \times H^1(\To) \times H^2(\To)$ satisfying the constraints
$\Div \; \uu_0^m, \Div_0 \;(F^m)^\top = 0$ and $|M^m_0|=1$ and converging strongly in $L^2(\To) \times L^2(\To) \times H^1(\To)$ to $(\uu_0, F_0, M_0)$. Every triple $(\uu_0^m, F_0^m, M_0^m)$ generates a strong solution $(\uu^m,F^m,M^m)$ to \eqref{main_equation} on a maximal time interval $[0,T^m)$ according to Lemma~\ref{strongSolutions}. 

\noindent Without loss of generality, we just analyse the case in which $T^m < T$, for a subsequence that with an abuse of notation we still denote by $(T^m)_{m\in\NN}$. Indeed, when $T^m\geq T$ for any $m\in\mathbb{N}$, the lifespans are a-priori bounded from below.
By Lemma~\ref{blowupLemma}, the solution $(\uu^m,F^m,M^m)$ satisfies $\int_0^{T_m} \no{\Delta M^m}{L^2}^2 (s) \dd s= + \infty$. In turn, Lemma~\ref{lemmaSmall1} provides a bound on $\int_0^t \no{\Delta M_m}{L^2}^2 (s) \dd s$ if 
$$ \underset{0\leq s\leq t, x \in \To}{\esssup} \int_{B_R(x)} |\nabla M^m(s)|^2 < \varepsilon_1$$
for some $\epsilon_1>0$ and $R>0$. Lemma~\ref{localEnergyEstimate} yields for any $t\in (0,T^m)$ the estimate 
\begin{align} \label{aux10}
\begin{split}
\int_{B_R(x_0)}  & |\uu^m(t)|^2 + |F^m(t)|^2 + |\nabla M^m(t)|^2  \\ 
& \leq \int_{B_{2R}(x_0)} |\uu_0^m|^2 + |F_0^m|^2 + |\nabla M_0^m|^2  
  + C_1 (1+t)R^2 + C_2 t \Big(1+\frac{1}{R^2}\Big) + 
 C_3 \frac{t^\frac{1}{3}}{R^\frac{2}{3}} \left( 1+ t\Big( 1+\frac{1}{R^2}\Big) \right)^\frac23.
 \end{split}
\end{align}
We can choose an $R>0$ such that 
\begin{align*}
 \int_{B_{2R}(x_0)} |\uu_0^m|^2 + |F_0^m|^2 + |\nabla M_0^m|^2    < \frac{\eps_1 }{4 }, \qquad 
C_1 (1+t) R^2< \frac{\eps_1 }{4 }
\end{align*}
for all $t\in (0,T^m)$, $x_0 \in \To$ and $m \in \N$. Hence, we define 
\begin{align*} 
T^*: = \min \left\{  \frac{\varepsilon_1 R^2}{4C_2(1+R^2) } , \frac{\varepsilon_1^3R^2}{4^3 C_3^3 \Big(1+ T\Big(1+\frac{1}{R^2}\Big)\Big)^2} \right\}.
\end{align*}
Combining the above relations together with \eqref{aux10}, we obtain that
$$ \underset{0\leq s\leq \min\{T^*, T^m\}}{\esssup} \int_{B_R(x_0)} |\nabla M^m(s)|^2 < \varepsilon_1$$
for all $x_0 \in \To$ and $m \in \N$. Hence Lemma~\ref{lemmaSmall1} yields that
\begin{align*}
    \int_0^{\min\{T^*,T^m\}}  \no{\Delta M^m(s)}{L^2}^2  \dd s < + \infty,
\end{align*}
which can only be the case if $T^* <T^m$ for every $m \in \NN$.
Therefore the strong solution $(\uu^m, F^m, M^m)$ exists  on $[0,T^*]$, where we highlight that $T^*$ does not depend on $m$. Finally, we pass to the limit with the a-priori bounds given in energy inequality \eqref{energyInequality} and Lemma~\ref{lemmaSmall1} which yields local existence of a solution with
$$(\uu,F,M) \in \Cc_w([0,T^*]; L^2( \To) \times L^2(\To) \times H^1(\To)) \,\,\,\, \cap \,\,\,\, L^2 (0,T^*; H^1(\To) \times H^1(\To) \times H^2(\To))$$ for given initial
data $(\uu_0,F_0,M_0) \in L^2(\To) \times L^2(\To) \times H^1(\To)$. 

\noindent Because of uniqueness of these solutions (cf.\ Theorem~\ref{thm:uniqueness}), we can extend our solution up to a singular time $T_1\in (0,T)$,  characterized by the following relation
\begin{equation} \label{lossRegularity}
    \underset{0\leq t \leq T_1, x \in \To}{\esssup}\int_{B_R(x)} |\nabla M(t)|^2 \geq \varepsilon_1
\end{equation}
for any $R>0$. Since $(\uu, F, M) \in \Cc_w([0,T_1]; L^2 (\To) \times L^2(\To) \times H^1(\To))$, the solution $(\uu,\,F,\,M)$ is well-defined at the time $T_1$, in particular $(\uu,F,M) (T_1) \in L^2(\To) \times L^2(\To) \times H^1(\To)$ with $\Div \; \uu (T_1) =0 , \Div \; F^\top(T_1) =0, |M(T_1)|=1$. 
We hence claim that the following loss of energy occurs at this first time singularity:
\begin{equation} \label{energyLoss}
 2E_1:=
 \int_{\To} |\uu(T_1)|^2 + 
 2 W(F(T_1)) + |\nabla M(T_1))|^2+ 2\psi(M(T_1))
 \leq 2E_0 + K_E \sqrt{T_1} - \eps_1.
 \end{equation}
In other words, the external force feeds energy into the system through $K_E \sqrt{T_1}$ while the singularity decreases the total energy of the fixed amount given by $\ee_1>0$. To prove this statement we proceed by contradiction. We assume that \eqref{energyLoss} is false, then it follows by \eqref{energyLawVariant} that
\begin{align*}
    0 \leq \underbrace{\underset{0\leq t \leq T_1, x \in \To}{\esssup} \left( \no{\nabla M(t)}{L^2}^2 - \no{\nabla M (T_1)}{L^2}^2 \right) }_{=:a} < \underset{0\leq t \leq T_1 }{\esssup} \left( 2 E_0 + K_E \sqrt{t} - 2E_0 -K_E
    \sqrt{T_1} + \eps_1 \right) = \eps_1.
\end{align*}
Thus, for any $x \in \To$,
\begin{align*}
   \underset{0\leq t \leq T_1, x \in \To}{\esssup}\int_{B_R(x)} |\nabla M(t)|^2  
   &= 
   \underset{0\leq t \leq T_1, x \in \To}{\esssup}
   \left\{\int_{B_R(x)}\left(  |\nabla M(t)|^2 -|\nabla M(T_1)|^2\right) + 
    \int_{B_R(x)}|\nabla M(T_1)|^2 \right\}\\
    &\leq a + \underset{ x \in \To}{\esssup} \int_{B_R(x)} |\nabla M(T_1)|^2 <\eps_1
\end{align*}
for a sufficiently small radius $R>0$ which is a contradiction to \eqref{lossRegularity}.


\noindent Since $(\uu(T_1),\,F(T_1),\,M(T_1))$ is defined in $L^2(\TT^2)\times L^2(\TT^2) \times H^1(\TT^2)$, we can restart our entire procedure with this new set of initial data, extending our solution to a new time interval $[T_1,\,T_2]$, where a new singularity appears at time $T_2$:
\begin{align*}
    2E_2:=
 \int_{\To} |\uu(T_1)|^2 + 
 2 W(F(T_2)) + |\nabla M(T_2))|^2+ 2\psi(M(T_2))
    &\leq 2 E_1 + K_E\sqrt{T_2-T_1} - \ee_1 \\
    &\leq 2E_0 + K_E(\sqrt{T_1}+\sqrt{T_2 - T_1}) - 2 \ee_1,
\end{align*}
where $E_1$ stands for the value of the energy at the time $T_1$.

\noindent Then, we continue this procedure by recursion, leading to a unique solution on the intervals $[T_1, T_2], [T_2, T_3]$ and so on.
Next we show that just a finite amount of singularities occurs before reaching the final time $T$ of system \eqref{main_equation}. This is satisfied if we prove that there is no accumulation point for the set of any singular time $T_i$. To this end, we proceed by contradiction: we assume that there exists a sequence of singular times $(T_i)_{i\in\NN}$ such that $T_i < T_{i+1} < T$ (with an abuse of notation we set $T_0 = 0$).
By \eqref{energyLoss}, every finite maximal time of existence comes with the loss of $\varepsilon_1$ for the energy, more precisely 
\begin{equation} \label{finiteSingularities}
\begin{aligned}
    0\leq 
    2E_n:=
 \int_{\To} |\uu(T_n)|^2 + 
 2 W(F(T_n)) + |\nabla M(T_n))|^2+ 2\psi(M(T_n))
    &\leq   2E_0 + \sum_{i=1}^n \left( K_E \sqrt{T_i- T_{i-1}} - \eps_1\right)\\
    &\leq 2E_0 -n\ee_1 + K_E \sum_{i=1}^n \sqrt{T_i- T_{i-1}}.
\end{aligned}
\end{equation}
Since $T_i-T_{i-1}$ converges towards $0$, eventually 
$\ee_1>K_E \sqrt{T_i-T_{i-1}}$ and thus
the right-hand side of the above inequality converges towards $-\infty$ as $n$ goes to $+\infty$, which is a contradiction. Thus we deduce that only a finite amount of time singularities occur before the solution becomes smooth until the final time $T>0$. This concludes the proof of of Theorem~\ref{struweSolutions}.
\end{proof}

\section{\bf Some technical lemmata} \label{technicalcalculations1} 

\noindent 
In this section we provide the proofs of some technical results we used in the previous section. We begin with proving Lemma~\ref{blowupLemma}, which asserts that a classical solution of \eqref{main_equation} stays smooth as long as $\Delta M$ is $L^2$-integrable in time and space. We first recall here the statement, negleting the index notation depending on $m\in \NN$.
\begin{lemma} \label{lemmaBlowUp2}
Suppose that $(\uu,F,M)$ is a strong solution to \eqref{main_equation} in the interval $[0, T]$. Then the following inequality holds true:
\begin{equation}\label{blowupCriterion2}
\begin{split}
 \frac{\dd}{\dd t} &\left( \no{ \nabla \uu}{L^2}^2 + \no{ \nabla F}{L^2}^2 +\no{ \Delta M}{L^2}^2 \right)  + 
 \left( \nu \no{\Delta \uu}{L^2}^2 +  \kappa \no{\Delta F}{L^2}^2 +   \no{ \nabla \Delta M}{L^2}^2 \right) \\
 &\leq C\left(1 + \no{\uu}{L^2}^2 + \no{F}{L^2}^2 + \no{\nabla M}{L^2}^2 \right) \left(1+ \no{\nabla \uu}{L^2}^2 + \no{\nabla F}{L^2}^2 +
\no{\Delta M}{L^2}^2 \right)^2,
\end{split}
\end{equation}
for a suitable constant $C$. In particular, the loss of regularity of the solution at the time $T$ is characterized by 
$$\lim_{t \nearrow T} \int_0^{t} \no{\Delta M}{L^2}^2(s) \dd s = + \infty.$$
\end{lemma}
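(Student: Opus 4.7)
The plan is to work on the approximate strong solutions and derive three parallel higher-order energy estimates, namely at level $H^1(\TT^2)$ for $\uu$ and $F$ and at level $H^2(\TT^2)$ for $M$. The quantities on the left-hand side of \eqref{blowupCriterion2} arise by testing the momentum equation of \eqref{main_equation} with $-\Delta \uu$, the $F$-equation with $-\Delta F$, and the Laplacian of the LLG equation with $-\Delta M$. Summing the three resulting identities produces $\frac{1}{2}\frac{\dd}{\dd t}\Phi(t)$, where $\Phi(t):=\no{\nabla \uu}{L^2}^2+\no{\nabla F}{L^2}^2+\no{\Delta M}{L^2}^2$, together with the dissipations $\nu\no{\Delta \uu}{L^2}^2$, $\kappa\no{\Delta F}{L^2}^2$ and $\no{\nabla\Delta M}{L^2}^2$. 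All that remains is to control the nonlinear cross-terms; here the two-dimensional Gagliardo--Nirenberg--Ladyzhenskaya inequality $\no{f}{L^4}^2\lesssim \no{f}{L^2}\no{\nabla f}{L^2}$ is used repeatedly, combined with Young's inequality so as to absorb the top-order pieces into the dissipation.

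\textbf{The Navier--Stokes and $F$ contributions.} For the velocity the convective term $-\int(\uu\cdot\nabla)\uu\cdot\Delta \uu$ is controlled by Ladyzhenskaya and Young, producing $\frac{\nu}{8}\no{\Delta \uu}{L^2}^2+C\no{\uu}{L^2}^2\no{\nabla \uu}{L^2}^4$. The forcing contributions $\Div(\nabla M\odot\nabla M)$, $\Div(W'(F)F^\top)$ and $\mu_0\nabla^\top H_\ext M$ are handled by one integration by parts together with the growth assumption $|W'(A)-\chi A|\leq C_2$, the bound on $W''$, and the regularity $H_\ext\in C^1$. Typically one moves a derivative onto $-\Delta \uu$ and ends up with products of the form $\no{\nabla^2 M}{L^2}\no{\nabla M}{L^4}\no{\Delta \uu}{L^2}$ or $\no{\nabla F}{L^4}\no{F}{L^4}\no{\Delta \uu}{L^2}$, which are absorbed again via Young. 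The $F$-equation tested by $-\Delta F$ is completely analogous; the only extra feature is the stretching $\nabla \uu\,F$, dealt with by the same Ladyzhenskaya device, using the convexity of $W$ only through $\nabla F:\nabla W'(F)\geq 0$ as in \eqref{energyInequality}.

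\textbf{The $M$ estimate: the main obstacle.} The delicate step is the $H^2$ estimate for $M$. Exploiting $|M|^2=1$ one has $M\cdot\Delta M=-|\nabla M|^2$, from which
\begin{equation*}
-M\wedge(M\wedge H_\eff)=H_\eff-(M\cdot H_\eff)M=\Delta M+|\nabla M|^2 M+R_0(M,H_\ext),
\end{equation*}
where $R_0$ collects zeroth-order contributions. The LLG equation reads then $\partial_t M-\Delta M=|\nabla M|^2 M-(\uu\cdot\nabla)M-M\wedge H_\eff+R_0$. Testing the Laplacian of this equation by $-\Delta M$ and integrating by parts yields $\tfrac12\tfrac{\dd}{\dd t}\no{\Delta M}{L^2}^2+\no{\nabla\Delta M}{L^2}^2$ plus contributions of the form $\int\nabla(|\nabla M|^2 M)\cdot\nabla\Delta M$, $\int\nabla((\uu\cdot\nabla)M)\cdot\nabla\Delta M$ and $\int\nabla(M\wedge\Delta M)\cdot\nabla\Delta M$. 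The cubic term is the genuine obstruction: using the two-dimensional bound $\no{\nabla M}{L^4}^2\lesssim \no{\nabla M}{L^2}\no{\nabla^2 M}{L^2}$ one can bound it by $\tfrac14\no{\nabla\Delta M}{L^2}^2+C(1+\no{\nabla M}{L^2}^2)(1+\no{\Delta M}{L^2}^2)^2$, and the remaining torque and convection contributions are treated in the same spirit. Summing with the hydrodynamic estimates produces \eqref{blowupCriterion2}.

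\textbf{Blow-up criterion.} Once \eqref{blowupCriterion2} is available, the characterization of the lifespan follows by a standard continuation argument. The energy estimate \eqref{energyInequality} and the bound \eqref{estimate-nablaF-L} provide a uniform control on $\no{\uu}{L^2}$, $\no{F}{L^2}$ and $\no{\nabla M}{L^2}$ on any interval $[0,t]$ on which the solution remains smooth. Suppose by contradiction that $T$ is the lifespan while $\int_0^T\no{\Delta M(s)}{L^2}^2\,\dd s<+\infty$. Revisiting the nonlinear estimates of the previous paragraph with $\no{\Delta M(s)}{L^2}^2$ treated as an $L^1$-in-time coefficient, and rebalancing the Young splits accordingly, one refines \eqref{blowupCriterion2} into a linear inequality of the form $\Phi'(t)\leq a(t)\Phi(t)+b(t)$ with $a,b\in L^1(0,T)$. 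Gronwall's lemma then yields $\Phi\in L^\infty(0,T)$ and $\nabla\Delta M\in L^2(0,T;L^2(\TT^2))$, which by Lemma~\ref{strongSolutions} allows extending the strong solution past $T$ and contradicts the maximality of the lifespan. Consequently $\int_0^T\no{\Delta M}{L^2}^2\,\dd s=+\infty$, which is the stated characterization.
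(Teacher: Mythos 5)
Your strategy coincides with the paper's: test the momentum equation by $-\Delta\uu$, the $F$-equation by $-\Delta F$, and the magnetization equation at the $H^2$ level (note a sign slip here: to produce $+\tfrac12\tfrac{\dd}{\dd t}\no{\Delta M}{L^2}^2+\no{\nabla\Delta M}{L^2}^2$ you must pair the Laplacian of the LLG equation with $+\Delta M$, i.e.\ test the LLG equation by $\Delta^2 M$ as the paper does), then close with 2D Ladyzhenskaya/Gagliardo--Nirenberg interpolation and Young, and obtain the blow-up characterization via Gronwall plus the local existence result. The one genuine divergence is the treatment of the worst coupling term. The paper never estimates $\int_\To(\Delta\uu\cdot\nabla)M\cdot\Delta M$: it observes that this term appears with opposite signs in the $-\Delta\uu$ test of the momentum equation (from $\Div(\nabla M\odot\nabla M)$) and in the $\Delta^2 M$ test of the LLG equation (from $(\uu\cdot\nabla)M$), so it cancels exactly upon summation. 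You estimate the two occurrences separately, which is viable, but your displayed bound $\no{\nabla^2M}{L^2}\no{\nabla M}{L^4}\no{\Delta\uu}{L^2}$ is not a legitimate H\"older estimate for $\int|\nabla^2M|\,|\nabla M|\,|\Delta\uu|$ (the exponents $\tfrac12+\tfrac14+\tfrac12$ exceed $1$); the correct route is $\no{\Delta\uu}{L^2}\no{\nabla^2M}{L^4}\no{\nabla M}{L^4}$ followed by $\no{\nabla^2M}{L^4}\lesssim\no{\Delta M}{L^2}^{1/2}\no{\nabla\Delta M}{L^2}^{1/2}$, which forces you to borrow a piece of the dissipation $\no{\nabla\Delta M}{L^2}^2$ inside the velocity estimate --- harmless once the three estimates are summed, but it should be said explicitly, since avoiding exactly this is the point of the paper's cancellation. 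The remaining items (cubic term, torque, convection, the $W'$-stress using $|W'(A)-\chi A|\leq C_2$ and $|W''|\leq C_3$, the external field, and the continuation argument obtained by writing $(1+\Phi)^2=(1+\Phi)\cdot(1+\Phi)$ with one factor rendered integrable in time by the energy bounds and the hypothesis on $\int\no{\Delta M}{L^2}^2$) match the paper's proof.
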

\begin{proof}
We first test the momentum equation of \eqref{main_equation} by $- \Delta \uu$. Hence recalling that $\Div \uu = 0$, we gather
\begin{align*}
	\frac12 \frac{\dd}{\dd t} \no{\nabla \uu}{L^2}^2 \!+\! \nu \no{\Delta \uu }{L^2}^2 &\! =\! \underbrace{ \int_\To (\uu \cdot \nabla) \uu \cdot \Delta \uu
	 -\mu_0 \nabla^\top H_\ext M \cdot \Delta \uu -  \Div (W'(F)F^\top) \cdot  \Delta \uu }_{ \sum_{i=1}^3 I_i }   + \int_\To (\Delta \uu \cdot \nabla ) M \cdot \Delta M .
\end{align*}
Next, we multiply the deformation tensor equation of \eqref{main_equation} by $-\Delta F$ and we integrate both in time and space, to gather
\begin{align*}
\frac12  \frac{\dd}{\dd t} \no{\nabla F}{L^2}^2 + \kappa \no{\Delta F }{L^2}^2  = \int_\To (\uu \cdot \nabla ) F : \Delta 
F - \nabla \uu F : \Delta F = \sum_{i=4}^5 I_i .
 \end{align*}
Additionally, we multiply the magnetization equation of \eqref{main_equation} by $\Delta^2 M$ and we integrate the result in $\mathbb{T}^2$, 
\begin{align*}
\frac12  \frac{\dd}{\dd t} \no{\Delta M}{L^2}^2 + \no{\nabla \Delta M }{L^2}^2  = -\int_\To  (\Delta u \cdot \nabla) M \cdot \Delta M + \sum_{i=6}^{15}I_i,
\end{align*}	
where 
	\begin{align*}
		I_6&=-2\sum_{k=1}^2\int_\To (\partial_k \uu \cdot\nabla) \partial_k M \cdot\Delta  M,\\
		I_7&=-\int_\To (\uu \cdot\nabla) \Delta M \cdot \Delta M,\\
		I_8&=-\int_\To  2\left((\nabla^2 M \nabla M )\otimes M \right)\cdot\nabla\Delta M ,\\
		I_9&=-\int_\To |\nabla M|^2\nabla M \cdot \nabla \Delta M ,\\
		I_{10}&=\int_\To \left(\nabla M \wedge(\Delta M + H_\ext)\right) \cdot \nabla \Delta M,\\
		I_{11}&=\int_\To (M \wedge \nabla H_\ext)\cdot \nabla \Delta M,\\
		I_{12}&=\int_\To (M \cdot H_\ext)\cdot(\nabla M \cdot \nabla \Delta M ),\\
		I_{13}&=\int_\To \left((\nabla\Delta M)^\top M\right)\cdot\left((\nabla M)^\top  H_\ext+(\nabla H_\ext)^\top M\right),\\
		I_{14}&=\int_\To \nabla H_\ext \cdot \nabla \Delta M, \\
		I_{15}&=-\int_\To \nabla ( M \wedge \psi'(M) + M \wedge M \wedge \psi'(M) ) \cdot \nabla \Delta M. 
	\end{align*}
We hence proceed estimating the terms $I_i$ by H\"older's, Ladyzhenskaya's,  Young's and interpolation inequalities. We obtain that
\begin{align*}
I_1 & \leq \no{\uu}{L^4} \no{\nabla \uu}{L^4} \no{\Delta \uu}{L^2} \lesssim
\delta  \no{\Delta \uu}{L^2}^2  + \no{\uu}{L^2} \no{\nabla \uu}{L^2}^2 \no{\Delta \uu}{L^2} \\
	& \lesssim \delta \no{\Delta \uu}{L^2}^2 +  \no{\uu}{L^2}^2 \no{\nabla \uu}{L^2}^4 \\
	I_2& \lesssim \delta \no{ \Delta \uu}{L^2}^2 + \no{\nabla H_\ext}{L^2}^2 \\
	I_3 & \lesssim \no{F}{L^4} \no{\nabla F}{L^4} \no{\Delta \uu}{L^2}
	\lesssim \delta  \no{\Delta \uu}{L^2}^2  + \no{F}{L^2} \no{\nabla F}{L^2}^2 \no{\Delta F}{L^2} \\
	& \lesssim \delta \left(  \no{\Delta \uu}{L^2}^2  + \no{\Delta F}{L^2}^2 \right)+  \no{F}{L^2}^2 \no{\nabla F}{L^2}^4,
\end{align*}
where in the last inequality we have used that $|W''(F)|\leq C_3$ and $|W'(F)|\leq C_2(1+|F|)$, as assumed in the introduction of this manuscript. Furthermore, we get that
\begin{align*}
	I_4 & 
	\lesssim \delta \no{\Delta F}{L^2}^2 + \no{\uu}{L^2}^2 \left( \no{\nabla \uu}{L^2}^4 +  \no{\nabla F}{L^2}^4\right) \\
	I_5 &  \leq \delta \left( \no{\Delta \uu }{L^2}^2 + \no{\Delta F}{L^2}^2 \right) + \no{F}{L^2}^2 \no{ \nabla \uu}{L^2}^2 \no{ \nabla F}{L^2}^2 \\
	I_6 &  \leq 2 \int |\nabla \uu | |\nabla^2 M||  \Delta M| \lesssim \no{\nabla \uu}{L^2} \no{ \Delta M}{L^4}^2 
		\lesssim \delta \no{\nabla \Delta M}{L^2}^2 + \left(\no{\nabla \uu}{L^2}^4 +\no{\Delta M}{L^2}^4\right).
\end{align*}
An integration by parts yields that $I_7 = 0$. Similarly as before we obtain that
\begin{align*}
	I_8 & \leq \int |\nabla M ||\nabla^2 M||\nabla \Delta M|^2 \lesssim \delta \no{\nabla \Delta M}{L^2}^2
	 + \no{\nabla M }{L^2}^2 \no{\Delta M}{L^2}^4 \\
	 I_9 & \leq \int |\nabla M|^3 |\nabla \Delta M| \lesssim \delta \no{\nabla \Delta M}{L^2}^2 + 
	  \no{\nabla M}{L^2}^2 \no{\Delta M}{L^2}^4,
\end{align*}
where we employed interpolation of $L^6$ into $H^1$. Recalling the assumptions on $H_\ext$, we derive similarly to above that
\begin{align*}
	 I_{10} & \lesssim \delta \no{\nabla \Delta M}{L^2}^2
	 + \no{\nabla M }{L^2}^2 \no{\Delta M}{L^2}^4  + 1\\ 
	 I_{11} & \lesssim  \delta  \no{ \nabla \Delta M}{L^2}^2 +  1\\
	 I_{12} &\lesssim \delta \no{\nabla \Delta M}{L^2}^2
	 + \no{\nabla M }{L^2}^2 \left(\no{\Delta M}{L^2}^4 +1\right) \\
    I_{13} &  \lesssim \delta \no{\nabla \Delta M}{L^2}^2
	 + \no{\nabla M }{L^2}^2\no{\Delta M}{L^2}^4 +1  \\
    I_{14} & \lesssim \delta  \no{ \nabla \Delta M}{L^2}^2 +  1 \\
    I_{15} & \lesssim \delta  \no{ \nabla \Delta M}{L^2}^2 +   \no{ \nabla  M}{L^2}^2.
\end{align*}
Summarizing, we deduce that \eqref{blowupCriterion2} holds true. Moreover, we remark that the Gronwall inequality implies that the
$L^\infty (0,\widetilde T;H^1\times H^1 \times H^2 ) \cap L^2(0,\widetilde T; H^2 \times H^2 \times H^3)$-norm of $(\uu,F,M)$ remains bounded as long as $\int_0^{\widetilde T} \no{\Delta M(s)}{L^2}^2 \dd s$ is finite.
\end{proof}
\noindent Next, we provide an additional lemma that allows to control the local energy of the system within a specific spatial support. This lemma plays a major role in the proof of the local energy inequality of Lemma~\ref{localEnergyEstimate}.
\begin{lemma} \label{localEnergy}
Let $\widetilde{T}>0$ be a general positive time and let $(\uu,F,M) \in H(0,\widetilde T) \times K(0,\widetilde T) \times V(0,\widetilde T)$ be a weak solution of \eqref{main_equation} with initial condition $(\uu_0,\,F_0,\, M_0)$. Further, let $\phi$ be a smooth function on $\To$. Then the following identity holds true
\begin{align*}
\int_\To &\Big(|\uu(t)|^2 + \chi |F(t)|^2 +|\nabla M(t)|^2\Big) \phi^2 + \int_0^t \int_\To \Big(\nu|\nabla \uu|^2 + \kappa\chi |\nabla F|^2 + |\Delta M +|\nabla M|^2M|^2\Big) \phi^2  \\
\leq & \int_\To \Big(|\uu_0|^2+ |F_0|^2 +|\nabla M_0|^2\Big) \phi^2 
+C\bigg\{  \int_0^t \int_\To \Big( |\uu|^2 + |F|^2 +|\nabla M|^2 +|\pre|\Big) |\uu ||\phi ||\nabla \phi| \\ 
& + \int_0^t \int_\To \Big( |\uu|^2 +|F|^2 + |\nabla M|^2 \Big) \Big(|\nabla \phi|^2 + |\phi||\nabla^2 \phi|\Big)  +  \int_0^t \int_\To |\nabla \uu ||F|| F- W'(F) | \phi^2 \\ 
& +\int_0^t \int_\To |\uu|^2\phi^2 
+\int_0^t \int_\To \Big(|H_\ext|^2 +|\nabla H_\ext|^2+ |\psi'(M)|^2\Big)\phi^2\bigg\}
\end{align*}
for all $0\leq  t \leq \widetilde T$.
\end{lemma}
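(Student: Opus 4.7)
The plan is to derive the stated inequality by testing each of the three evolution equations in \eqref{main_equation} against a $\phi^2$-weighted natural multiplier, and summing the resulting identities after integration on $[0,t]$. Specifically, I would test the momentum equation $\eqref{main_equation}_1$ against $\uu\,\phi^2$, the deformation tensor equation $\eqref{main_equation}_3$ against $\chi F\,\phi^2$ (rather than the naive choice $W'(F)\phi^2$ suggested by the global energy law), and the Landau--Lifshitz--Gilbert equation $\eqref{main_equation}_5$ against $-\phi^2 H_{\eff}$. This directly produces the three quadratic dissipations $\nu|\nabla \uu|^2\phi^2$, $\chi\kappa|\nabla F|^2\phi^2$ and $|M\wedge H_{\eff}|^2\phi^2$ on the left, together with the time derivative of the localized energy $\tfrac12\int_{\TT^2}(|\uu|^2+\chi|F|^2+|\nabla M|^2)\phi^2$, and the initial contribution $\int (|\uu_0|^2+\chi|F_0|^2+|\nabla M_0|^2)\phi^2$ on the right.

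The essential algebraic cancellations are those encoded in the variational structure of the model. By the identity $\Div(\nabla M\odot \nabla M) = (\nabla M)^\top\Delta M + \tfrac12\nabla|\nabla M|^2$, the coupling force in the momentum equation, integrated against $\uu\phi^2$, produces $-\int\Delta M\cdot(\uu\cdot\nabla M)\phi^2$ plus a commutator $\int\tfrac12|\nabla M|^2\,\uu\cdot\nabla\phi^2$ (using $\Div\,\uu=0$). The first cancels exactly against the convective contribution $\int(\uu\cdot\nabla M)\cdot\Delta M\phi^2$ coming from testing LLG against $-\phi^2 H_{\eff}$, while the latter is absorbed into the first error group on the right-hand side. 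Symmetrically, the stretching term $-\nabla\uu\,F$ in the $F$-equation, tested against $\chi F\phi^2$, produces $\chi\int F F^\top:\nabla\uu\,\phi^2$, which cancels the Piola--Kirchhoff contribution $-\int W'(F)F^\top:\nabla\uu\,\phi^2$ from the momentum equation up to the residual $\int(W'(F)-\chi F)F^\top:\nabla\uu\,\phi^2$. This residual is precisely what is controlled by the admissible error $\int |\nabla\uu||F||F-W'(F)|\phi^2$ on the right-hand side.

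To extract the dissipation of the magnetization in the stated form $|\Delta M+|\nabla M|^2 M|^2\phi^2$, I would use the constraint $|M|=1$, which gives pointwise $M\cdot\partial_t M = M\cdot(\uu\cdot\nabla M)=0$ and $M\cdot\Delta M=-|\nabla M|^2$. Hence $\Delta M + |\nabla M|^2 M$ is exactly the tangential projection of $\Delta M$ relative to $M$, and $|M\wedge H_{\eff}|^2 = |H_{\eff}|^2-(M\cdot H_{\eff})^2$ decomposes as
\begin{equation*}
|M\wedge H_{\eff}|^2 = \bigl|\Delta M + |\nabla M|^2 M + \mu_0(H_\ext-(M\cdot H_\ext)M)-(\psi'(M)-(M\cdot \psi'(M))M)\bigr|^2.
\end{equation*}
Cauchy--Schwarz then gives $|M\wedge H_{\eff}|^2 \geq \tfrac12|\Delta M+|\nabla M|^2 M|^2 - C(|H_\ext|^2+|\psi'(M)|^2)$, yielding exactly the targeted dissipation on the left and matching the last admissible error on the right.

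The remaining bookkeeping consists of estimating the commutators generated by the cutoff $\phi$: the convection $(\uu\cdot\nabla)\uu$ and the pressure term $\nabla \pre$ integrated against $\uu\phi^2$ produce, after integration by parts and $\Div\,\uu=0$, contributions of the form $(|\uu|^2+|\pre|)|\uu||\phi||\nabla\phi|$; the viscous and dissipative operators $-\nu\Delta\uu$, $-\kappa\Delta F$ and $-\Delta M$, after moving one derivative onto $\phi^2$, produce terms of type $(|\uu|^2+|F|^2+|\nabla M|^2)(|\nabla\phi|^2+|\phi||\nabla^2\phi|)$; the Zeeman-type forcing $\mu_0\nabla^\top H_\ext M\cdot \uu\phi^2$ is bounded by $|\uu|^2\phi^2 + |\nabla H_\ext|^2\phi^2$; and the transport terms in the $F$- and $M$-equations are handled analogously. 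The main obstacle is not any single estimate but the organization of the computation: every nonlinear interaction produced by the localization must be either absorbed into one of the three dissipative quadratic forms on the left or matched with one of the five admissible error structures on the right, and the lack of coercivity of $W$ in $\nabla F$ is precisely what forces the choice of $\chi F\phi^2$ instead of $W'(F)\phi^2$ as the test function for the $F$-equation.
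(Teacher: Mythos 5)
Your overall strategy --- localizing the energy law with the weight $\phi^2$, testing the $F$-equation against $\chi F\phi^2$ rather than $W'(F)\phi^2$, and exploiting the two cancellations (between $\Div(\nabla M\odot\nabla M)\cdot\uu\,\phi^2$ and the convection term of the LLG equation, and between the stretching term and the Piola--Kirchhoff stress up to the residual $|\nabla\uu||F||\chi F-W'(F)|\phi^2$) --- is exactly the paper's, and those parts are correct. The substantive deviation is the multiplier for the LLG equation: you use $-H_{\eff}\phi^2$, whereas the paper uses $-(\Delta M+|\nabla M|^2M)\phi^2$, and this difference is not cosmetic. Pairing the left-hand side $\partial_t M+\uu\cdot\nabla M$ with the lower-order part $\mu_0H_\ext-\psi'(M)$ of your multiplier generates the terms $-\int_0^t\int(\partial_t M+\uu\cdot\nabla M)\cdot(\mu_0H_\ext-\psi'(M))\phi^2$, which your plan never accounts for and which do not reduce to any of the five admissible error structures on the stated right-hand side. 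Indeed, $\int\partial_tM\cdot H_\ext\,\phi^2$ requires either a pointwise bound on $\partial_tM$ (which, via the equation, brings in the inadmissible quartic $|\uu|^2|\nabla M|^2\phi^2$) or an integration by parts in time, which introduces the localized Zeeman and anisotropy energies $-\mu_0\int(M\cdot H_\ext)(t)\phi^2+\int\psi(M(t))\phi^2$ at times $0$ and $t$ together with $\int_0^t\int M\cdot\partial_tH_\ext\,\phi^2$ --- none of which appear in the statement; similarly $|\uu||\nabla M||H_\ext|\phi^2$ leaves a residue $|\nabla M|^2\phi^2$ that is not among the admissible errors. The paper's multiplier is chosen precisely to avoid this: since $\partial_tM\cdot M=(\uu\cdot\nabla M)\cdot M=0$, the left-hand side of the LLG equation pairs with $-(\Delta M+|\nabla M|^2M)\phi^2$ to give only $\tfrac12\tfrac{\dd}{\dd t}\int|\nabla M|^2\phi^2$ plus a cutoff commutator, while $H_\ext$ and $\psi'(M)$ enter only through the right-hand side of the LLG equation, where they are quadratically controlled by $(|H_\ext|^2+|\psi'(M)|^2)\phi^2$ after absorbing a fraction of $|\Delta M+|\nabla M|^2M|^2\phi^2$.

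A second, smaller point: the commutator $2\int\partial_tM_j\,\partial_kM_j\,\phi\,\partial_k\phi$ produced by $\int\partial_tM\cdot(-\Delta M)\phi^2$ is the most delicate cutoff term and is not covered by your generic remark about moving one derivative onto $\phi^2$; it cannot be closed by Cauchy--Schwarz alone (again because $|\partial_tM|^2\phi^2$ is inadmissible) and must be handled, as in the paper, by re-substituting the LLG equation for $\partial_tM$, which yields $|\uu||\nabla M|^2|\phi||\nabla\phi|$, $|\nabla M|^2|\nabla\phi|^2$, a small multiple of the dissipation, and $(|H_\ext|^2+|\psi'(M)|^2)\phi^2$. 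Your computation, once these terms are tracked, would prove a perfectly usable variant of the lemma (the extra terms are of order $(1+t)R^2$ in the subsequent application), but not the inequality as stated.
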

\begin{proof}
Multiplying the momentum equation of \eqref{main_equation} by $\uu\phi^2$ and integration over $\To$, we have that 
\begin{align*}
\int_\To \uu_t \cdot \uu  \phi^2  &+  (\uu \cdot \nabla  ) \uu \cdot \uu \phi^2 - \nu \Delta \uu \cdot \uu \phi^2  + \nabla \pre \cdot \uu \phi^2  \\ &=  \int_\To \Big(- \Div (\nabla M \odot \nabla M) \cdot \uu  + \Div (W'(F)F^\top ) + \mu_0 \nabla^\top H_\ext M \Big) \cdot \uu \phi^2.
\end{align*}
We first proceed analyzing any term on the left-hand side, since the ones on the right-hand side will eventually be  simplified under suitable combination with terms of the other equations. Thus
\begin{align*}
& \bullet \int_\To \uu_t \cdot \uu \phi^2 = \frac12 \frac{\dd}{\dd t} \int_\To |\uu|^2 \phi^2, \\
& \bullet 
\int_\To (\uu \cdot \nabla  ) \uu \cdot \uu \phi^2 =
\int_\To \uu_j \partial_j \uu_i \cdot  \uu_i \phi^2 = - \int_\To \frac{|\uu|^2}{2} \uu_j 2 \phi \partial_j \phi \leq 
C\int_\To |\uu |^3 |\phi| |\nabla \phi|, \\
& \bullet - \nu \int_\To \Delta \uu \cdot \uu \phi^2 =\nu  \int_\To \partial_j \uu_i \partial_j \uu_i \phi^2 + \partial_j \uu_i \cdot \uu_i 
2 \phi \partial_j \phi  = \nu \int_\To |\nabla \uu|^2 \phi^2 - \nu \int_\To |\uu|^2 ( \partial_j \phi \partial_j \phi + \phi \partial_j^2 \phi)  \\ 
 &\hspace{2.75cm} = \nu \int_\To
 |\nabla \uu|^2 \phi^2 - |\uu|^2 (|\nabla \phi|^2 + \phi \Delta \phi) \geq  \nu \int_\To
 |\nabla \uu|^2 \phi^2 - |\uu|^2 (|\nabla \phi|^2 + |\phi||\nabla^2 \phi|) ,\\
& \bullet \int_\To \nabla \pre \cdot \uu \phi^2 = - \int_\To 2\pre \uu \cdot \phi \nabla \phi
 \leq\int_\To 2|\pre| |\uu ||\phi| |\nabla \phi |.
\end{align*}
Furthermore, the contribution of the external magnetic field to the momentum equation is dealt with by
\begin{equation*}
    \mu_0 \int_\To \nabla^\perp H_\ext M\cdot \uu \phi^2 \leq C\Big( \int_\To |\nabla H_\ext |^2\phi^2 +  \int_\To|\uu |^2\phi^2\Big).
\end{equation*}
Next, we multiply the equation for $F$ in \eqref{main_equation} by $\chi F\phi^2$, where we recall that the positive constant $\chi>0$ is such that $|W'(A)-\chi A| \leq C_2$, for any $A\in \mathbb{R}^{2\time 2}$. Integrating over $\To$, we gather
$$ \chi\int_\To\Big( F_t : F \phi^2 + (\uu\cdot  \nabla)F : F \phi^2 - \nabla \uu F :F \phi^2 - \kappa \Delta F : F  \phi^2\Big) =0,$$
where
\begin{align*}
& \bullet  \int_\To
 F_t : F\phi^2 = \frac12 \frac{\dd }{\dd t} \int_\To |F|^2 \phi^2 ,
\\
& \bullet \int_\To (\uu\cdot  \nabla)F : F \phi^2 = - \int_\To |F|^2 \uu_j  \phi \partial_j \phi
\leq \int_{\To} |F|^2|\uu||\phi||\nabla\phi| ,\\
&  \bullet -\kappa \int_\To \Delta F : F \phi^2 =\kappa \int_\To \Big(\nabla F : \nabla F \phi^2  + \partial_j F_{ik} 
F_{ik} 2 \phi \partial_j \phi \Big)\\
& \, \, \, \, = \kappa \int_\To \Big(|\nabla F|^2 \phi^2- |F|^2  \Big(|\nabla \phi|^2 + \phi \Delta \phi\Big)\Big) 
\geq \kappa \int_\To \Big(|\nabla F|^2 \phi^2- |F|^2  \Big(|\nabla \phi|^2 + |\phi|| \nabla^2 \phi|\Big)\Big). 
\end{align*}
Additionally we have that
\begin{align*}
& -\int_\To \chi\nabla \uu F: F \phi^2 + \Div (W'(F)F^\top )\cdot \uu \phi^2 = - \int_\To \partial_j \uu_i \cdot F_{jk} \chi F_{ik} \phi^2 + 
\partial_j ( W'(F)_{ik} F_{jk} ) \uu_i \phi^2 \\
& = \int_\To \uu_i  W'(F)_{ik} F_{jk} 2 \phi \partial_j \phi - \int_\To \nabla \uu F: (\chi F - W'(F) ) \phi^2
\\
& \leq C\left\{  \int_\To |\uu ||F|^2 |\phi||\nabla\phi|+ \int_{\To} |\uu|^2 \phi^2 + \int_\To |F|^2 |\nabla \phi|^2 \right\} + \int_\To |\nabla\uu||F||\chi F-W'(F)|\phi^2.
\end{align*}
Moreover, multiplying the LLG equation in \eqref{main_equation} by $-(\Delta M+|\nabla M|^2 M )\phi^2$ and integrating over $\To$, we deduce that
\begin{align*}
-\int_\To &\Big(M_t + (\uu \cdot \nabla ) M \Big) \cdot \Big(\Delta M+|\nabla M|^2 M \Big) \phi^2 \\
 =&\int_\To  M \wedge H_\eff  \cdot \Big(\Delta M+|\nabla M|^2 M \Big)\phi^2+ \int_\To M \wedge (M \wedge H_\eff) \cdot \Big(\Delta M+|\nabla M|^2 M \Big)\phi^2.
\end{align*}
Developing the first term on the right-hand side, we deduce that
\begin{align*}
	\int_\To  M \wedge H_\eff  \cdot \Big(\Delta M+|\nabla M|^2 M \Big)\phi^2 
	&=\int_\To  M \wedge \Big(  \mu_0 H_\ext -\psi'(M)\Big)  \cdot \Big(\Delta M+|\nabla M|^2 M \Big)\phi^2 \\
	&\leq  C\int_\To \Big(| H_\ext|^2 +|\psi'(M)|^2\Big)\phi^2 + 
	\frac{1}{4}\int_\To |\Delta M+|\nabla M|^2 M |^2\phi^2,
\end{align*}
while the second term yields
\begin{align*}
	\int_\To& M \wedge (M \wedge H_\eff) \cdot \Big(\Delta M+|\nabla M|^2 M \Big)\phi^2 \\
	&= 
	\int_\To M \wedge \Big(M \wedge (\mu_0 H_\ext - \psi'(M))\Big) \cdot (\Delta M+|\nabla M|^2 M )\phi^2 - 
	\int_\To |\Delta M+|\nabla M|^2M|^2\phi^2 \\ & \leq C\int_\To (| H_\ext|^2 +|\psi'(M)|^2)\phi^2 -
	\frac{3}{4} |\Delta M+|\nabla M|^2M|^2\phi^2.
\end{align*}
We hence remark that the following estimates holds true
\begin{align*}
\int_\To& (\uu \cdot \nabla ) M \cdot \Big(\Delta M +|\nabla M|^2M \Big) \phi^2 -\Div ( \nabla M \odot \nabla M) \cdot \uu \phi^2  \\
 =&  \int_\To  \uu_i \partial_i M_k \cdot \partial_j^2 M_k \phi^2 - \partial_j  (\partial_i M_k  \partial_j M_k) \uu_i\phi^2   =   \int_\To \frac{1}{2} |\nabla M|^2 \uu \cdot 2 \phi \nabla \phi 
\leq C \int_\To |\nabla M|^2 |\uu||\phi||\nabla \phi|.
\end{align*}
Next, we have $M_t\cdot |\nabla M|^2 M = |\nabla M|^2\partial_t (|M|^2/2) = 0$ and
\begin{align*}
		&  \int_\To M_t \cdot (-\Delta M) \phi^2  = \frac12 \frac{\dd}{\dd t}\int_\To |\nabla M|^2 \phi^2 + \int_\To \partial_t M_j \partial_k 
		M_j 2 \phi \partial_k \phi.
\end{align*}
Thanks to the LLG equation,
\begin{align*}
	\int_\To \partial_t M_j \partial_k& 
		M_j 2 \phi \partial_k \phi 
		= 
	-\int_\To (\uu\cdot \nabla M_j)\partial_k 
		M_j 2 \phi \partial_k \phi - 
		\int_\To \Big(M\wedge (\Delta M + \mu_0 H_\ext - \psi'(M) \Big)_j\partial_k 
		M_j 2 \phi \partial_k \phi  \\
		& - \int_\To \Big(M\wedge M\wedge \Big(\Delta M + \mu_0 H_\ext - \psi'(M) \Big)\Big)_j\partial_k 
		M_j 2 \phi \partial_k \phi \\
		= & 
	-\int_\To (\uu\cdot \nabla M_j)\partial_k 
		M_j 2 \phi \partial_k \phi  
	-	\int_\To (M\wedge (\Delta M +|\nabla M|^2 M + \mu_0 H_\ext - \psi'(M) )_j\partial_k 
		M_j 2 \phi \partial_k \phi \\ 
		& - \int_\To \Big(M\wedge M\wedge \Big(\Delta M +|\nabla M|^2 M + \mu_0 H_\ext - \psi'(M) \Big)\Big)_j\partial_k 
		M_j 2 \phi \partial_k \phi
		\\
		\leq &
		\,C\int_\To | \uu ||\nabla M|^2|\phi ||\nabla \phi| +
		\frac{1}{4}\int_\To  | \Delta M +|\nabla M|^2 M|^2\phi^2 \\ 
		& + C\int_\To |\nabla M|^2 |\nabla \phi|^2 +
		C\int_\To (|H_\ext |^2 + |\psi'(M)|^2)\phi^2
\end{align*}
Finally,  
integration over $[0,t]$ yields the assertion.
\end{proof}

\section{Fourier analysis toolbox} \label{app:LP}

\noindent
Before dealing with the uniqueness of Theorem~\ref{main-thm1}, we recall here the main ideas of the Littlewood-Paley theory along with the definition of Besov spaces, which we will exploit in the forthcoming analysis (cf.\ Proposition~\ref{prop:estEE}). We refer the interested reader to \cite{B-C-D,Gubinelli-Perkowski,ScTr}, for further details. 


\medbreak
\noindent
First of all, let us introduce the so called ``Littlewood-Paley decomposition'', based on a homogeneous dyadic partition of unity with
respect to the Fourier variable. 
%
We fix a smooth radial function $\widetilde \chi$ supported on the ball $B_2(0)\subset \RR^d $, equal to $1$ in a neighborhood of $B_1(0)$
and such that $r\mapsto \widetilde \chi(r\,e)$ is nonincreasing over $\R_+$ for all unitary vectors $e\in\R^d$. Set
$\varphi\left(\xi\right)=\widetilde \chi\left(\xi\right)-\widetilde \chi\left(2\xi\right)$ and
$\vphi_q(\xi):=\vphi(2^{-q}\xi)$ for all $q\geq0$.
%
The dyadic blocks $(\Dd_q)_{q\in\ZZ}$ are defined by
$$
	\Dd_q u(x) \,:=\,\sum_{n\in \ZZ^d} u_n\varphi_q\left(|n|\right)e^{i n\cdot x}\qquad 
	\text{with}\quad
	{u_n} = \frac{1}{(2\pi)^d}\int_{\TT^d}u(x)e^{-i n\cdot x}\dd x.
$$ 
Observe that there are only contributions to the sum for $|n|\in [2^{q-1},\,2^q]\cap \mathbb{N}$ . 
We  also introduce the following low frequency, homogeneous cut-off operator:
\begin{equation} \label{eq:S_j}
\Sd_{q} u\,:=\,\sum_{k\leq q-1}\Dd_{k}u,
\end{equation}
for $q\in \ZZ$.  Following Remark 2.11 of \cite{B-C-D}, we infer that the operators $\Sd_q$ and $\Dd_q$ continuously map $L^p$ to itself, for all $q\in \ZZ$ and all $p\in[1,+\infty]$, with norms independent of $q$ and $p$.

\noindent
The following classical property holds true: Let $u\in L^1(\TT^d)$ with null average $\int_{\TT^\dd}u = 0$; then $u=\sum_{q\in \ZZ}\Dd_q u$.
Next we recall also mention the so-called \textit{Bernstein inequalities}, which explain the way derivatives act on spectrally localized functions.
  \begin{lemma}[\!\!\cite{B-C-D}, Lemma 2.1]  \label{l:bern}
Let  $0<r<R$.   A constant $C$ exists so that, for any nonnegative integer $k$, any couple $(p,q)$ 
in $[1,+\infty]^2$, with  $p\leq q$,  and any function $u\in L^p$,  we  have, for all $\lambda>0$,
\begin{align}\label{Bernstein-ineq}
{\rm supp}\, \widehat u \subset   B(0,\lambda R)\cap \ZZ^\dd \quad
&\Longrightarrow\quad
\|\nabla^k u\|_{L^q}\, \leq\,
 C^{k+1}\,\lambda^{k+d\left(\frac{1}{p}-\frac{1}{q}\right)}\,\|u\|_{L^p}\;;\\
{\rm supp}\, \widehat u \subset \{\xi\in\R^d\,|\, r\lambda\leq|\xi|\leq R\lambda\}\cap \ZZ^\dd 
\quad & \Longrightarrow\quad C^{-k-1}\,\lambda^k\|u\|_{L^p}\,
\leq\,
\|\nabla^k u\|_{L^p}\,
\leq\,
C^{k+1} \, \lambda^k\|u\|_{L^p}\,,
\end{align}
where $\hat u$ denotes the Fourier transform of $\uu$.
\end{lemma}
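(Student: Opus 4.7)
The plan is to obtain both statements via the classical multiplier-convolution technique, treating $u$ as a Fourier series on $\TT^d$ but leveraging a smooth compactly supported multiplier defined on $\RR^d$. Concretely, for the first estimate I would pick once and for all a smooth nonnegative function $\theta \in C_c^\infty(\RR^d)$ with $\theta \equiv 1$ on $B(0,R)$ and $\mathrm{supp}\,\theta \subset B(0,2R)$, and for $\lambda>0$ set $\theta_\lambda(\xi) := \theta(\xi/\lambda)$. Since $\widehat u$ is supported in $B(0,\lambda R)\cap \ZZ^d$, I have $\theta_\lambda(n)\,\widehat u(n) = \widehat u(n)$ for every $n\in \ZZ^d$. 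Defining $K_\lambda := \mathcal{F}^{-1}_{\RR^d}(\theta_\lambda)$ and its periodization $\widetilde K_\lambda(x) := \sum_{n\in\ZZ^d} K_\lambda(x+2\pi n)$, the Poisson summation formula gives $u(x) = (\widetilde K_\lambda \ast u)(x)$ on $\TT^d$, hence $\nabla^k u = (\nabla^k \widetilde K_\lambda) \ast u$.

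Young's convolution inequality with the exponents $1+\tfrac{1}{q} = \tfrac{1}{s} + \tfrac{1}{p}$ (so that $s\geq 1$ because $p\leq q$) then yields $\|\nabla^k u\|_{L^q(\TT^d)} \leq \|\nabla^k \widetilde K_\lambda\|_{L^s(\TT^d)} \|u\|_{L^p(\TT^d)}$. A direct change of variables shows $\nabla^k K_\lambda(x) = \lambda^{d+k}(\nabla^k K_1)(\lambda x)$, which after an $L^s$-computation gives $\|\nabla^k K_\lambda\|_{L^s(\RR^d)} = \lambda^{k + d(1-1/s)} \|\nabla^k K_1\|_{L^s(\RR^d)} = \lambda^{k+d(1/p-1/q)} \|\nabla^k K_1\|_{L^s(\RR^d)}$. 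The passage from $\|\nabla^k K_\lambda\|_{L^s(\RR^d)}$ to $\|\nabla^k \widetilde K_\lambda\|_{L^s(\TT^d)}$ just costs a universal constant (Schwartz decay of $K_1$ and periodization on the fundamental domain). Absorbing all $k$-independent constants into a single $C$ yields a factor $C^{k+1}$, which proves the first inequality.

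For the second (annular) statement, the upper bound $\|\nabla^k u\|_{L^p} \leq C^{k+1}\lambda^k \|u\|_{L^p}$ is a direct consequence of the first inequality with $p=q$ (the annular support is contained in the ball $B(0,\lambda R)$). The nontrivial content is the lower bound $\|u\|_{L^p} \leq C^{k+1}\lambda^{-k}\|\nabla^k u\|_{L^p}$. For this I would take a smooth function $\eta \in C_c^\infty(\RR^d\setminus\{0\})$ with $\eta \equiv 1$ on the annulus $\{r\leq|\xi|\leq R\}$ and define the symbol $m(\xi) := \eta(\xi)\,|\xi|^{-2k}$, which is smooth and compactly supported in $\RR^d\setminus\{0\}$. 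Set $m_\lambda(\xi) := m(\xi/\lambda)$, so that on the spectral support of $\widehat u$ one has $(-\Delta)^k_\xi$-multiplier identity $\widehat u(n) = m_\lambda(n)\,(|n|^{2})^k \widehat u(n) = \lambda^{-2k} m(n/\lambda) \widehat{(-\Delta)^k u}(n)$ (up to a sign). Convolving with the periodization of $\mathcal{F}^{-1}_{\RR^d}(m)$ scaled to $\lambda$, Young's inequality in the form $\|f\ast g\|_{L^p} \leq \|f\|_{L^1}\|g\|_{L^p}$ yields $\|u\|_{L^p} \lesssim \lambda^{-2k}\|(-\Delta)^k u\|_{L^p}$. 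When $k$ is odd one replaces the symbol by $\eta(\xi)\,\xi_j/|\xi|^{2}$ and differentiates once, iterating appropriately; equivalently, one splits $k = 2\ell$ or $k=2\ell+1$ and chains the bounds. Finally, $\|(-\Delta)^k u\|_{L^p} \leq C^{2k}\|\nabla^{2k}u\|_{L^p}$ together with the upper bound of the annular case reduces this to $\|\nabla^k u\|_{L^p}$ (or one argues directly with the symbol $\eta(\xi)\,\xi^{\alpha}/|\xi|^{2|\alpha|}$ for any multi-index $|\alpha|=k$).

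The main technical point to handle carefully is the passage from the full-space multiplier calculus on $\RR^d$ to the periodic setting on $\TT^d$; everything else is just scaling and Young's inequality. The cleanest way is to invoke Poisson summation for rapidly decaying Schwartz kernels, which shows that $\|\widetilde K_\lambda\|_{L^1(\TT^d)}$ and its derivatives are controlled by the corresponding $L^1(\RR^d)$-norms of $K_\lambda$ uniformly in $\lambda \geq 1$ (and the small-$\lambda$ case is trivial because the spectral constraint forces $u$ to be constant or zero for $\lambda < 1$). Aside from this bookkeeping, both inequalities follow purely from scaling, so the resulting constants depend only on $r$, $R$, and the dimension $d$, and assemble into the claimed geometric factor $C^{k+1}$.
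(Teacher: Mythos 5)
Your argument is correct and is essentially the standard proof of this lemma, which the paper does not prove but simply imports from \cite{B-C-D} (Lemma 2.1): a smooth compactly supported multiplier equal to one on the relevant ball or annulus, periodization of its inverse Fourier transform via Poisson summation, Young's convolution inequality, and scaling, with the lower bound in the annular case obtained from the symbols $\eta(\xi)\,\xi^{\alpha}/|\xi|^{2|\alpha|}$, $|\alpha|=k$. The only bookkeeping worth flagging is that the geometric constant $C^{k+1}$ requires checking that $\|\nabla^{k}K_{1}\|_{L^{1}}$ (and the $L^{1}$ norms of the annular kernels) grow at most geometrically in $k$, and that the ``trivial'' small-$\lambda$ regime is $\lambda R<1$ rather than $\lambda<1$; neither point affects the validity of the proof.
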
   

\noindent
{Later we will choose $\lambda = \tfrac1{2^q}$.} By use of the Littlewood-Paley decomposition, we can define the class of homogeneous Besov spaces.
\begin{def} \label{d:B}
Let $s\in\R$ and $1\leq p,r\leq+\infty$. The homogeneous Besov space
$\BB^{s}_{p,r}=\BB^{s}_{p,r}(\TT^2)$ is defined as the space of any homogeneous distribution $u$ in $\TT^2$ for which
$$
\|u\|_{\BB^{s}_{p,r}}\,:=\,
\left\|\left(2^{qs}\,\|\Dd_q u\|_{L^p}\right)_{q\in\ZZ}\right\|_{\ell^r}\,<\,+\infty\,.
$$
\end{def}

\noindent
Besov spaces extend the well-known class of Sobolev and H\"older spaces.  Moreover, for all $s\in\R$ with $s<d/p$ we have the isomorphism of Banach spaces $\BB^s_{2,2}(\TT^2)\cong \Hh^s(\TT^2)$, with
\begin{equation} \label{eq:LP-Sob}
\|f\|_{\Hh^s}\,\sim\,\left(\sum_{q\in \ZZ}2^{2 q s}\,\|\Dd_q f\|^2_{L^2(\TT^2)}\right)^{1/2}\,.
\end{equation}
In fact, the previous isomorphism is an isomorphism of Hilbert spaces. Indeed, if we define the $\BB^s_{2,2}$ scalar product by the formula,
\begin{equation}\label{inner_product}
\langle f,\, g \rangle_{\BB_{2,2}^{\s}} := \sum_{q\in \ZZ} 2^{2\,q\,\s}\langle \Dd_q f\,,\, \Dd_q g\rangle_{L^2(\TT^2)}\,,
\end{equation}
by Proposition 2.10 of \cite{B-C-D} we get a scalar product which is equivalent to the classical one over $\Hh^s(\TT^2)$.
We remark that the above relation is well defined also when $f$ and $g$ are $L^2$ functions with no null average, i.e.\ they are not homogeneous. In that framework, the inner product generates only a semi norm.

\noindent
An immediate consequence of the first Bernstein inequality in \eqref{Bernstein-ineq} is the following embedding result.
\begin{prop}[\!\! \cite{B-C-D}, Proposition~2.20]\label{p:embed}
Let $1\leq p_1 \leq p_2 \leq \infty$ and $1\leq r_1\leq r_2 \leq \infty$. Then, for any real number $s$ the space $\BB^{s}_{p_1,r_1}$ is continuously embedded in the space $\BB^{s - d(1/p_1 -1/p_2)}_{p_2,r_2}$. 
\end{prop}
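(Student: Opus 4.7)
The plan is to reduce the embedding to two ingredients: the first Bernstein inequality of Lemma~\ref{l:bern} applied dyadic-blockwise, and the monotonicity $\ell^{r_1}(\ZZ) \hookrightarrow \ell^{r_2}(\ZZ)$ valid whenever $r_1 \leq r_2$ on the counting measure space $\ZZ$. No deep machinery is needed; the entire proof is a two-line chain of inequalities once the roles of localization and scaling are identified.

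First I would fix $u \in \BB^{s}_{p_1, r_1}$ and consider, for each $q \in \ZZ$, the dyadic block $\Dd_q u$, whose Fourier support is localized in the annulus $\{|n|\sim 2^q\}$ and in particular in a ball of radius $\sim 2^q$. Since $p_1 \leq p_2$, the first inequality of Lemma~\ref{l:bern} (with $k = 0$ and $\lambda = 2^q$) yields a constant $C > 0$ independent of $q$ such that
$$
\|\Dd_q u\|_{L^{p_2}} \,\leq\, C\, 2^{q d(1/p_1 - 1/p_2)}\, \|\Dd_q u\|_{L^{p_1}}.
$$
Multiplying both sides by $2^{q(s - d(1/p_1 - 1/p_2))}$ transforms this into the termwise estimate
$$
2^{q(s - d(1/p_1 - 1/p_2))}\, \|\Dd_q u\|_{L^{p_2}} \,\leq\, C\, 2^{qs}\, \|\Dd_q u\|_{L^{p_1}}.
$$

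Next I would take the $\ell^{r_1}(\ZZ)$ norm in $q$ on both sides, which by Definition~\ref{d:B} directly gives
$$
\|u\|_{\BB^{s - d(1/p_1 - 1/p_2)}_{p_2, r_1}} \,\leq\, C\, \|u\|_{\BB^{s}_{p_1, r_1}}.
$$
This already closes the case $r_1 = r_2$. For the case $r_1 < r_2$, I would finish by invoking the elementary continuous inclusion $\ell^{r_1}(\ZZ) \hookrightarrow \ell^{r_2}(\ZZ)$ (of norm at most one, since each atom of the counting measure has unit mass), applied to the sequence $\bigl(2^{q(s - d(1/p_1 - 1/p_2))} \|\Dd_q u\|_{L^{p_2}}\bigr)_{q \in \ZZ}$. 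Chaining the two estimates produces
$$
\|u\|_{\BB^{s - d(1/p_1 - 1/p_2)}_{p_2, r_2}} \,\leq\, \|u\|_{\BB^{s - d(1/p_1 - 1/p_2)}_{p_2, r_1}} \,\leq\, C\, \|u\|_{\BB^{s}_{p_1, r_1}},
$$
which is exactly the desired embedding.

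There is no serious obstacle in this strategy: the only substantive input is the Bernstein inequality already recorded in Lemma~\ref{l:bern}, while the rest is a blockwise scaling argument followed by the trivial monotonicity of discrete $\ell^r$ spaces. The one point worth attending to is that Bernstein provides a constant genuinely independent of $q$, which is what makes the homogeneous summation over all $q \in \ZZ$ innocuous.
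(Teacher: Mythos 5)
Your proof is correct and follows exactly the route the paper indicates (it introduces the proposition as ``an immediate consequence of the first Bernstein inequality''): blockwise Bernstein with $k=0$ to pass from $L^{p_1}$ to $L^{p_2}$ at the cost of $2^{qd(1/p_1-1/p_2)}$, followed by the monotone inclusion $\ell^{r_1}\hookrightarrow\ell^{r_2}$. Nothing to add.
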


\noindent In the case of a negative index of regularity, homogeneous Besov spaces may be characterized in terms of operators $\Sd_q$, as follows.
\begin{prop}[\!\!\cite{B-C-D},Proposition~2.33]\label{prop:s_negative}
	Let $s<0$ and $1\leq p,r\leq \infty$. Then $u$ belongs to $\BB_{p,r}^s$ if and only if 
	\begin{equation*}
		\left\|\left(2^{qs}\,\|\Sd_q u\|_{L^p}\right)_{q\in\ZZ}\right\|_{\ell^r}\,<\,+\infty.
	\end{equation*}
	The above norm is equivalent to the standard one given by the dyadic blocks $(\Dd_q)_{q\in \ZZ}$.
\end{prop}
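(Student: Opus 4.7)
The plan is to prove equivalence of the two sequence norms by exploiting the elementary algebraic relations
$\Sd_q u = \sum_{k\leq q-1}\Dd_k u$ and $\Dd_q u = \Sd_{q+1}u - \Sd_q u$,
together with the decisive assumption $s<0$, which turns the resulting sums into convergent geometric series. The strategy is to view both estimates as convolution bounds on $\ZZ$ and to apply Young's inequality.

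First I would show that any $u\in\BB^s_{p,r}$ satisfies the desired $\Sd_q$-bound. Setting $a_k:=2^{ks}\|\Dd_k u\|_{L^p}$ and $c_j:=2^{js}\mathbf{1}_{j\geq 1}$, the triangle inequality gives
\begin{equation*}
2^{qs}\|\Sd_q u\|_{L^p} \,\leq\, \sum_{k\leq q-1} 2^{(q-k)s}\,a_k \,=\, (c\ast a)_q.
\end{equation*}
Since $s<0$, one has $\|c\|_{\ell^1(\ZZ)}=2^s/(1-2^s)<+\infty$, so Young's convolution inequality yields
$\bigl\|\bigl(2^{qs}\|\Sd_q u\|_{L^p}\bigr)_q\bigr\|_{\ell^r}\leq \|c\|_{\ell^1}\,\|a\|_{\ell^r}$,
which is exactly the desired bound by the definition of $\|u\|_{\BB^s_{p,r}}$.

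For the converse, I would exploit the telescoping identity $\Dd_q u = \Sd_{q+1}u - \Sd_q u$ directly:
\begin{equation*}
2^{qs}\|\Dd_q u\|_{L^p}\,\leq\, 2^{-s}\cdot 2^{(q+1)s}\|\Sd_{q+1}u\|_{L^p}\,+\,2^{qs}\|\Sd_q u\|_{L^p},
\end{equation*}
and the triangle inequality in $\ell^r(\ZZ)$, combined with shift invariance of the $\ell^r$ norm, yields
$\|u\|_{\BB^s_{p,r}}\leq (1+2^{-s})\,\bigl\|\bigl(2^{qs}\|\Sd_q u\|_{L^p}\bigr)_q\bigr\|_{\ell^r}$.
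This direction uses no sign condition on $s$ and is essentially free.

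The only technical point to be careful with -- more a subtlety than a serious obstacle -- is the distributional meaning of the two norms. When the $\Sd_q$-norm is finite with $s<0$, the low-frequency tail satisfies $\|\Sd_q u\|_{L^p}\leq 2^{-qs}\,\bigl\|\bigl(2^{qs}\|\Sd_q u\|_{L^p}\bigr)_q\bigr\|_{\ell^\infty}\to 0$ as $q\to -\infty$ (since $-s>0$), so that $u=\sum_{q\in\ZZ}\Dd_q u$ is recovered without ambiguity by polynomials. In the periodic setting, where $u$ is identified with its zero-mean representative, this verification is automatic, and the proof closes.
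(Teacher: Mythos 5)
Your argument is correct and is essentially the standard proof of this result: the paper itself only cites \cite{B-C-D}, Proposition~2.33, and the proof given there is exactly your convolution/Young's inequality bound for the direction using $s<0$ together with the telescoping identity $\Dd_q u=\Sd_{q+1}u-\Sd_q u$ for the converse. Your closing remark on the vanishing of the low-frequency tail as $q\to-\infty$ is also the right observation to make the representation $u=\sum_q\Dd_q u$ unambiguous.
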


\noindent
Let us now introduce the dyadic decomposition (after J.-M.\ Bony, see \cite{Bony}). Constructing this decomposition relies on the observation that, 
formally, any product  of two suitable functions $u$ and $v$ with null averages may be decomposed into 
\begin{equation}\label{eq:bony}
u\cdot v\;=\; \sum_{q\in\ZZ} \Sd_{q-1} u \cdot  \Dd_q v + \sum_{q\in \ZZ} \Sd_{q+2}v\cdot \Dd_q u. 
\end{equation}
Taking into account that the Fourier coefficients of $\Sd_{q-1} u \cdot\Dd_q v$ are included into the dyadic annulus $2^q \mathcal{C}$ and that the Fourier coefficients of $\Sd_{q+2}v \cdot \Dd_q u$ are included in the ball $B(0, 2^q)$, we  have the following quasi-orthogonal properties:
\begin{equation*}
\begin{aligned}
	\Dd_j ( \Sd_{q-1} u \cdot \Dd_q v) &= 0\qquad \text{for}\quad |q-j|\geq 5,\\
	\Dd_j(\Sd_{q+2}v \cdot \Dd_q u) &= 0\qquad \text{for}\quad j> q+5.
\end{aligned}
\end{equation*}
We will hence often use the Bony decomposition
\begin{equation}\label{BONY}
\begin{aligned}
		\Dd_q (u\cdot v) &= \sum_{|q-j|\leq 5} \Dd_q(\Sd_{j-1} u \cdot \Dd_j v) + \sum_{j\geq q- 5} \Dd_q(\Sd_{j+2}v \cdot \Dd_j u) \\
		&=\sum_{|q-j|\leq 5} [\Dd_q,\Sd_{j-1} u]\cdot \Dd_j v +\sum_{|q-j|\leq 5}(\Sd_{j-1} u-\Sd_{q-1} u)\Dd_q \Dd_j  v + \Sd_{q-1}u \cdot \Dd_q v + \sum_{j\geq q- 5} \Dd_q(\Sd_{j+2}v \cdot \Dd_j u).
\end{aligned}
\end{equation}
We remark that usually it is the third term that is difficult to control. To control the first term of the above decomposition, the following classical commutator estimate can be applied.
\begin{lemma}[\!\!\cite{MR2038119}, Lemma~2.4]\label{l:commut}
 {Let $p\geq 1$, $r\geq 1$, $h\geq 1$ be such that} $1/p = 1/r + 1/h$. Then there exists a constant $C$ such that for any vector fields $u$ and $v$ with null average, 
	the following inequality holds true:
	\begin{equation*}
		\| [\Dd_q, u ]\cdot v \|_{L^p} \leq C2^{-q} \| \nabla u \|_{L^r} \| v \|_{L^h}.
	\end{equation*}
\end{lemma}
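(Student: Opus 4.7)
The standard route is to realize $\Dd_q$ as a convolution operator against an $L^1$-kernel whose first moment has the correct scaling in $q$, and then combine this with the fundamental theorem of calculus applied to the factor $u$. Concretely, let $h \in \mathcal{S}(\R^d)$ be chosen so that its Fourier transform equals $\varphi$, and set $h_q(x) = 2^{qd} h(2^q x)$. On $\TT^d$, the dyadic block $\Dd_q$ is convolution against the periodization $K_q$ of $h_q$, which satisfies the same scaling bounds: in particular $\|K_q\|_{L^1} \lesssim 1$ and, more importantly,
\begin{equation*}
   \int_{\TT^d} |z|\,|K_q(z)|\,\dd z \;\leq\; \int_{\R^d} |z|\,|h_q(z)|\,\dd z \;=\; 2^{-q} \int_{\R^d} |w|\,|h(w)|\,\dd w \;\lesssim\; 2^{-q},
\end{equation*}
by the substitution $w = 2^q z$, using that $h$ is Schwartz so that its first moment is finite.

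The plan is then to write, for any $x\in \TT^d$,
\begin{equation*}
  [\Dd_q,u]v(x) \;=\; \int_{\TT^d} K_q(x-y) \big( u(y) - u(x)\big) v(y)\,\dd y,
\end{equation*}
and use that for the straight-line path between $x$ and $y$ (working with periodic lifts) we have the identity $u(y)-u(x) = \int_0^1 \nabla u\big(x + \tau(y-x)\big)\cdot (y-x)\,\dd\tau$. Substituting $z = y-x$, this yields the representation
\begin{equation*}
   [\Dd_q,u]v(x) \;=\; \int_0^1 \!\!\int_{\TT^d} K_q(-z)\, z \cdot \nabla u(x+\tau z)\, v(x+z)\,\dd z\,\dd\tau.
\end{equation*}
The main step is then to bound the $L^p_x$-norm by Minkowski's inequality in the $(\tau,z)$ variables and Hölder's inequality in $x$ with the exponents $1/p = 1/r + 1/h$, which, together with translation invariance of the Lebesgue norms, gives
\begin{equation*}
   \big\| [\Dd_q,u]v \big\|_{L^p} \;\leq\; \Big(\int_{\TT^d} |z|\,|K_q(-z)|\,\dd z \Big)\, \|\nabla u\|_{L^r}\, \|v\|_{L^h}.
\end{equation*}
The scaling estimate on $K_q$ recalled above then yields the factor $C\,2^{-q}$.

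The only subtle point is the justification of the Taylor-like identity on $\TT^d$: one must lift $u$ to a periodic function on $\R^d$ and choose for each pair $(x,y)$ a representative $z = y - x$ in a fixed fundamental domain so that $|z| \lesssim 1$; since $K_q$ is a Schwartz kernel on $\R^d$ (up to the harmless periodization tail, which obeys the same moment bound), the whole computation reduces to the Euclidean case and no further loss is incurred. The null-average assumption on $u$ and $v$ is only used so that every block $\Dd_q$ acts non-trivially (so that the estimate is meaningful) and so that the characterization via the $\Dd_q$'s together with~\eqref{eq:LP-Sob} is indeed an isomorphism of semi-normed into normed spaces; the commutator identity and the integration-by-parts-free argument above do not require it directly.
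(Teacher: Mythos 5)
The paper does not prove this lemma at all: it is quoted directly from the cited reference (Paicu, Lemma~2.4; see also Lemma~2.97 in \cite{B-C-D}), so there is no in-paper argument to compare against. Your proof is the standard one for this commutator estimate and it is correct: writing $[\Dd_q,u]v(x)=\int K_q(x-y)(u(y)-u(x))v(y)\,\dd y$, inserting the first-order Taylor formula for $u$, and applying Minkowski and H\"older with $1/p=1/r+1/h$ reduces everything to the first-moment bound $\int |z|\,|K_q(z)|\,\dd z\lesssim 2^{-q}$, which you justify correctly on the torus via periodization of the Euclidean kernel. You also correctly observe that the null-average hypothesis is not really needed for the inequality itself. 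The only blemish is notational: you use $h$ both for the Lebesgue exponent in the statement and for the Schwartz function defining the kernel; rename one of them.
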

\noindent We conclude this section with the statement of a product law between homogeneous Sobolev space.
\begin{lemma}[\!\!\cite{MR3576270}, Theorem~A.1]\label{lemma_product_homogeneous_sobolev_spaces}
	Let $s$ and $t$ be two real numbers such that $ |s|$ and $|t|$ belong to $[0,d/2)$. Let us assume that $s+t$ is positive, then there exists a positive 
	constant $C$ such that
	\begin{equation*}
		\|a \,b\|_{\Hh^{s+t-d/2}(\TT^\dd)}\leq C\|a\|_{\Hh^{s}(\TT^d)}\|b\|_{\Hh^t(\TT^\dd)}
	\end{equation*}
	for every  $a\in \Hh^s(\TT^d)$ and for every $b\in \Hh^t(\TT^d)$.
\end{lemma}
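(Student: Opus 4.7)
The plan is to invoke Bony's paraproduct decomposition \eqref{eq:bony}, rewritten in the standard three-piece form
\begin{equation*}
a\cdot b\;=\;T_a b \,+\, T_b a \,+\, R(a,b),
\end{equation*}
with paraproducts $T_a b := \sum_q \Sd_{q-1}a\cdot \Dd_q b$, $T_b a:=\sum_q \Sd_{q-1} b\cdot \Dd_q a$ and remainder $R(a,b) := \sum_{|q-q'|\le 1}\Dd_q a\cdot \Dd_{q'}b$. I would then estimate each of the three pieces separately in $\Hh^{s+t-d/2}(\TT^d)$, relying on the dyadic characterization \eqref{eq:LP-Sob} of the norm.

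First I would treat the paraproduct $T_a b$. Each summand $\Sd_{q-1}a\cdot \Dd_q b$ is spectrally localized in a dyadic annulus of size $\sim 2^q$, so only $O(1)$ values of $q$ contribute to $\Dd_j(T_a b)$. Exploiting Bernstein \eqref{Bernstein-ineq} together with the hypothesis $s<d/2$, one bounds
\begin{equation*}
\|\Sd_{q-1} a\|_{L^\infty}\;\lesssim\;\sum_{k\le q-2}2^{kd/2}\|\Dd_k a\|_{L^2}\;\lesssim\; 2^{q(d/2-s)}\|a\|_{\Hh^s}.
\end{equation*}
Combined with H\"older's inequality, this produces the dyadic estimate
\begin{equation*}
2^{j(s+t-d/2)}\|\Dd_j(T_a b)\|_{L^2}\;\lesssim\;\|a\|_{\Hh^s}\sum_{|j-q|\le 5}2^{(j-q)t}\bigl(2^{qt}\|\Dd_q b\|_{L^2}\bigr),
\end{equation*}
and Young's convolution inequality on $\ell^2$-sequences closes the bound for $T_a b$. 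The symmetric term $T_b a$ is handled identically, now using $t<d/2$.

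The remainder $R(a,b)$ is the step I expect to be the main obstacle. The summands $\Dd_q a\cdot \Dd_{q'}b$ with $|q-q'|\le 1$ are spectrally supported in balls of radius $\sim 2^q$ rather than annuli, so high-frequency interactions can feed into low-frequency dyadic blocks $\Dd_j$ with $j\le q$. Applying Bernstein's embedding from $L^1$ into $L^2$ at frequency $2^j$, together with H\"older, one arrives at
\begin{equation*}
\|\Dd_j R(a,b)\|_{L^2}\;\lesssim\; 2^{jd/2}\sum_{q\ge j-5}\|\Dd_q a\|_{L^2}\|\Dd_{q'}b\|_{L^2}.
\end{equation*}
Multiplying by the weight $2^{j(s+t-d/2)}$ recasts the right-hand side as a discrete convolution with kernel $2^{(j-q)(s+t)}\mathbf{1}_{\{q\ge j-5\}}$. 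This kernel lies in $\ell^1(\ZZ)$ precisely because $s+t>0$: without this positivity the series diverges as $q\to+\infty$. A final application of Young's inequality, together with a Cauchy--Schwarz split between the dyadic sequences $(2^{qs}\|\Dd_q a\|_{L^2})_q$ and $(2^{qt}\|\Dd_{q'}b\|_{L^2})_q$, yields the desired bound $\|R(a,b)\|_{\Hh^{s+t-d/2}}\lesssim \|a\|_{\Hh^s}\|b\|_{\Hh^t}$.

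Thus the main subtlety sits in the remainder: the shift from annular to ball-type frequency localization requires a resummation that is integrable only through the positivity $s+t>0$. The hypotheses $|s|,|t|<d/2$ are used exclusively in the paraproduct estimates, where they guarantee that the Bernstein embedding $\Hh^\sigma\hookrightarrow L^\infty$ applied to the low-frequency cut-offs $\Sd_{q-1}$ produces a summable dyadic series.
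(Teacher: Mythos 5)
The paper does not prove this lemma at all: it is imported verbatim from \cite{MR3576270}, Theorem~A.1, whose proof is exactly the Bony paraproduct argument you outline, so your proposal is correct and follows the same route as the cited source. Your accounting of where each hypothesis enters is also accurate --- $s<d/2$ (resp.\ $t<d/2$) makes the Bernstein bound $\|\Sd_{q-1}a\|_{L^\infty}\lesssim 2^{q(d/2-s)}\|a\|_{\Hh^s}$ summable for the paraproducts, while $s+t>0$ is needed only for the $\ell^1$-summability of the remainder kernel $2^{(j-q)(s+t)}\mathbf{1}_{\{q\ge j-5\}}$ (the exact exponent in your paraproduct kernel should read $2^{(j-q)(s+t-d/2)}$ rather than $2^{(j-q)t}$, but this is immaterial since that sum ranges over $|j-q|\le 5$; note also that the lower bounds $s,t>-d/2$ are never used directly, being implied by $s+t>0$ together with the upper bounds).
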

\begin{remark}
    Lemma~\ref{lemma_product_homogeneous_sobolev_spaces} also holds true  when the product $a b$ is not homogeneous, namely it is not average free. Nevertheless, Lemma~\ref{lemma_product_homogeneous_sobolev_spaces} states that the homogeneous Sobolev seminorm $\Hh^{s+t-\dd/2}$ can still be bounded by the correponding Sobolev norms of the homogeneous functions $a$ and $b$.
\end{remark}

\section{Uniqueness of weak solutions}\label{sec:uniqueness}
\noindent
In this section we provide the proof of the uniqueness result for the weak solutions of system \eqref{main_equation}, that we state in the following theorem:
\begin{theorem}\label{thm:uniqueness}
    Let $(\uu_0,\,F_0,,M_0)\in L^2(\TT^2)\times L^2(\TT^2)\times H^1(\TT^2)$, then the solution determined in Theorem~\ref{struweSolutions} is unique within of the 
    functional space introduced in Definition~\ref{def:weak-sol}. 
\end{theorem}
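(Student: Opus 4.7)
The plan is to estimate the difference of two Struwe-like solutions $(\uu_i,F_i,M_i)$, $i=1,2$, sharing the same initial datum, by means of an Osgood-type energy inequality set below the natural regularity of the solutions. Denote by $\delta\uu,\delta F,\delta M$ the pairwise differences; they satisfy a linearised version of \eqref{main_equation} with vanishing initial data. Since the set of singular times $\{T_1,\ldots,T_N\}$ is finite and the $(\uu_i,F_i,M_i)$ lie in $\Cc_w([0,T];L^2\times L^2\times H^1)$, it is enough to prove uniqueness on each sub-interval $(T_i,T_{i+1})$ and patch the conclusions at the endpoints, where all three components coincide by the continuity in $L^2\times L^2\times H^1$. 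Therefore, I fix one such sub-interval $(a,b)$ on which $M_1,M_2\in V(a+\eps,b-\eps)$ for every $\eps>0$ and work locally.

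On $(a,b)$ I introduce the scale-adapted functional
\begin{equation*}
\delta\EE(t)\;:=\;\tfrac12\no{\delta\uu(t)}{\Hh^{-1/2}}^2\;+\;\tfrac12\no{\delta F(t)}{\Hh^{-1/2}}^2\;+\;\tfrac12\no{\delta M(t)}{\Hh^{1/2}}^2,
\end{equation*}
where the shift of one derivative on the magnetisation side reflects the parabolic scaling of the coupled system (cf.\ \eqref{energyInequality}). The reason for the choice $s=-1/2$ is purely technical: it is strictly below the $L^2$ level, which forces the difference to zero (rather than merely giving weak--strong uniqueness), while still lying in the range $s\in(-1,0)$ where the product law of Lemma~\ref{lemma_product_homogeneous_sobolev_spaces} applies and where the norm can be read off via the low-frequency cut-offs $\Sd_q$ of Proposition~\ref{prop:s_negative}.

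The bulk of the argument consists of applying a dyadic block $\Dd_q$ to each of the three difference equations, testing against $2^{-q}\Dd_q\delta\uu$, $2^{-q}\Dd_q\delta F$ and $2^{q}\Dd_q\delta M$ respectively, integrating in space, and summing in $q\in\ZZ$ using the equivalence \eqref{eq:LP-Sob} and the scalar product \eqref{inner_product}. The transport contributions $\uu_i\cdot\nabla$ are handled via the commutator Lemma~\ref{l:commut} combined with the Bony decomposition \eqref{BONY}, which transfers derivatives from $\delta\uu$ to the reference velocity. Crucially, I exploit the same structural cancellations that underlie the existence theory: pairing the $\delta\uu$-equation with the $\delta M$-equation makes the top-order coupling $\Div(\nabla M\odot\nabla M)$ and $(\uu\cdot\nabla)M$ collapse after integration by parts; pairing the $\delta\uu$-equation with the $\delta F$-equation cancels $\nabla\uu\,F$ against $\Div(W'(F)F^\top)$ modulo a lower-order remainder, controlled using $|W''|\leq C_3$; finally the Zeeman-type force $\mu_0\nabla^\top H_\ext M$ and the $M\wedge H_\ext$ in the LLG equation cancel analogously. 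What survives after these cancellations are only commutators and remainders which can be absorbed by the dissipative terms $\nu\no{\nabla\delta\uu}{\Hh^{-1/2}}^2$, $\kappa\no{\nabla\delta F}{\Hh^{-1/2}}^2$ and $\no{\nabla\delta M}{\Hh^{1/2}}^2$ produced by the Laplacians.

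The hardest contribution to estimate is the constraint-induced nonlinearity $|\nabla M|^2 M$ in the LLG equation, because neither factor of $\nabla M$ lies, a priori, in an $L^\infty_x$ class. My plan is to treat it paradifferentially: decompose the difference $|\nabla M_1|^2M_1-|\nabla M_2|^2M_2$ via \eqref{BONY} into three para-pieces and apply Bernstein's inequalities (Lemma~\ref{l:bern}) block by block, using the unitary bound $|M_i|=1$ for the $L^\infty$ factor, the control of $\no{\nabla^2 M_i}{L^2_tL^2_x}$ from the Struwe class, and the product rule Lemma~\ref{lemma_product_homogeneous_sobolev_spaces} with indices $s_1=1/2$, $s_2=1/2$ landing at the right regularity $\Hh^0$. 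The logarithmic loss that one typically pays at the $L^\infty$ endpoint is absorbed into an Osgood modulus of continuity $\mu(r)=r\log(e+1/r)$, yielding the final estimate
\begin{equation*}
\delta\EE(t)\;\leq\;\delta\EE(a)\;+\;\int_a^t f(s)\,\mu\bigl(\delta\EE(s)\bigr)\,\dd s,\qquad t\in(a,b),
\end{equation*}
with $f\in L^1_{\rm loc}(a,b)$ depending only on the a-priori norms of $(\uu_i,F_i,M_i)$. Since $\delta\EE(a)=0$ and $\int_0 \frac{\dd r}{\mu(r)}=+\infty$, the Osgood lemma forces $\delta\EE\equiv 0$ on $(a,b)$, hence $\delta\uu=\delta F=\delta M=0$, closing the uniqueness argument interval by interval. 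The main obstacle is precisely the paradifferential estimate of $|\nabla M|^2 M$ at the critical regularity $\Hh^{1/2}$: all dyadic pieces must be collected with an exact logarithmic gain, since the system is genuinely critical and no polynomial power of $\delta\EE$ would suffice to close a Gronwall inequality.
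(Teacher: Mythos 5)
Your overall architecture (measuring the difference in $\Hh^{-1/2}\times\Hh^{-1/2}\times H^{1/2}$, closing an Osgood inequality, and continuing interval by interval across the singular times) is exactly the paper's, but two steps of your execution plan would not go through as stated. First, the structural cancellations you invoke do not survive the passage to the $\Hh^{-1/2}$ inner product. The cancellation of $\Div(\nabla M\odot\nabla M)$ against $(\uu\cdot\nabla)M$, and of $\nabla\uu\,F$ against $\Div(W'(F)F^\top)$, relies on testing the three equations with the \emph{matched} multipliers $\uu$, $W'(F)$, $-H_\eff$ and on exact integration by parts in $L^2$. For the difference system one tests the $\delta F$-equation with $\delta F$ (not with $\delta W'(F)$), the multipliers carry different dyadic weights ($2^{-q}$ versus $2^{+q}$ in your setup), and the localisation $\Dd_q$ does not commute with products, so $\int \Dd_q(fg)\cdot\Dd_q h\neq\int\Dd_q(fh)\cdot\Dd_q g$; moreover the difference system is not symmetric in $(1,2)$ (one gets $\nabla M_1\odot\nabla\delta M+\nabla\delta M\odot\nabla M_2$ on one side but $\uu_1\cdot\nabla\delta M+\delta\uu\cdot\nabla M_2$ on the other). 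The paper therefore does \emph{not} use these cancellations in the uniqueness proof: every coupling term is estimated directly via the product law of Lemma~\ref{lemma_product_homogeneous_sobolev_spaces} and interpolation, and the only antisymmetry exploited is the one for the pure transport terms $\langle \uu^h\cdot\nabla B,B\rangle_{\Hh^{-1/2}}$, which requires the dedicated commutator estimate of Lemma~\ref{comm-est} (Bony decomposition plus Lemma~\ref{l:commut}), not a plain integration by parts. Without either the cancellation or such commutator lemmas, the top-order terms cannot be absorbed by the dissipation.

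Second, you place the logarithmic (Osgood) loss in the constraint nonlinearity $|\nabla M|^2M$, but in the paper the terms $(\nabla\delta M:\nabla M_1)M_1$ and their analogues close with an ordinary Gronwall factor $g(s)\,\delta\EE(s)$, using $|M_i|=1$, Ladyzhenskaya and the product law; no logarithm is needed there. The genuine source of the modulus $\mu(r)=r(1+\ln(1+1/r))$ is the elastic stress difference $\delta W'(F)F_1^\top$: since $W'$ is only Lipschitz, $\delta W'(F)$ can merely be bounded by $|\delta F|$ in $L^2$-based spaces, and one must split $F_1=\Sd_NF_1+(\Id-\Sd_N)F_1$ with the truncation level $N\sim\ln\bigl(1+1/\delta\EE(t)\bigr)$ chosen in terms of the unknown itself (Lemma~\ref{lemma:log-ineq}). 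This step is absent from your plan. Finally, a smaller but real point: $\delta\uu$ is not average-free (the Zeeman force $\mu_0\nabla^\top H_\ext M$ injects a nonzero mean), so $\no{\delta\uu}{\Hh^{-1/2}}$ is only a seminorm; the paper must add $|\overline{\delta\uu}|^2$ to $\delta\EE$ and propagate it through \eqref{average-ineq}, and it repeatedly splits $\uu_i$, $M_i$ into mean plus homogeneous part before applying the homogeneous product laws. Your functional omits this component, so even a successful Osgood argument would only give $\delta\uu=\overline{\delta\uu}(t)$ rather than $\delta\uu=0$.
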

\noindent 
The main idea is to evaluate the difference between two Struwe-like solutions with the same initial data in a function space which is less regular than $L^2(\TT^2)$, such as $\Hh^{-\frac{1}{2}}(\TT^2)$. For convenience, we introduce the following notation:
\begin{equation*}
	(\delta \uu,\, \delta F,\,\delta M) = (\uu_1 - \uu_2,\,F_1 -F_2,\, M_1 - M_2),
\end{equation*}
where $(\uu_1,\, F_1,\,M_1)$ and $(\uu_2,\,F_2,\,M_2)$ stand for the first and second solution, respectively. 
We aim to estimate the two differences within the function space given by
\begin{equation}\label{space-where-deltauanddeltad-are-defined}
	\begin{aligned}
		(\delta \uu^h,\,\delta F)\,\in\,L^\infty(0,T;\,\Hh^{-\frac{1}{2}}(\TT^2))\cap L^2(0,T;\Hh^{\frac{1}{2}}(\TT^2)),\quad
		\delta M\,\in\, L^\infty(0,T;\,H^{\frac{1}{2}}(\TT^2))\cap  L^2(0,T;\,H^{\frac{3}{2}}(\TT^2)),
	\end{aligned}
\end{equation} 
where $\delta \uu^h$ stands for the homogeneous component of the velocity field $\delta \uu$, which will be explicitly defined below in this section. Hence, under the assumption that the initial data of both solutions coincide, we claim that
\begin{equation*}
	(\delta \uu(t),\, \delta F(t),\,\delta M(t))  = (0, 0, 0)\qquad \text{for any time }t>0.
\end{equation*}
To this end we introduce the $\Hh^{-1/2}$-energy of the system and its related dissipation:
\begin{equation}    \label{energyDifference}
	\begin{aligned}
		\delta \EE (t) &:= \frac{1}{2}|\overline{\delta \uu(t)}|^2+ \frac{1}{2}\| \delta \uu^h(t) \|_{\Hh^{-\frac{1}{2}}(\TT^2)}^2 +\frac{1}{2} \| \delta F(t) \|_{\Hh^{-\frac{1}{2}}(\TT^2)}^2+ \frac{1}{2}\| \delta M(t) \|_{H^{\frac{1}{2}}(\TT^2)}^2,\\
		\delta \DD(t) &:=\nu \| \nabla \delta \uu(t) \|_{\Hh^{-\frac{1}{2}}(\TT^2)}^2+\kappa \| \nabla \delta F(t) \|_{\Hh^{-\frac{1}{2}}(\TT^2)}^2  + \| \Delta \delta M (t) \|_{\Hh^{-\frac{1}{2}}(\TT^2)}^2,
	\end{aligned}
\end{equation}
where $\overline{\delta \uu(t)}$ stands for the average of the velocity field in $\mathbb{T}^2$, as described below.

\noindent 
The functional setting of $\delta F$ is that of homogeneous Sobolev space $\Hh^{-\frac{1}{2}}(\TT^2)$, which is well motivated by the formal identity
\begin{equation}
	\int_{\TT^2} \delta F(t) \,=\, 0,
\end{equation} 
for any time $t\in\RR_+$. This relation can be obtained integrating in $[0, t)\times \TT^2$ the equations for $\delta F$ of \eqref{main_equation}, recalling that $\Div \,\delta F(t)^\top = 0$ for any time $t\in \RR_+$. We remark however that this null-average property is not satisfied by the velocity $\delta \uu$ and by the magnetization $\delta M$, hence we are forced to estimate these terms in a non-homogeneous environment. For this reason, we will repeatedly split these terms into their homogeneous and non-homogeneous components:
\begin{equation*}
\begin{alignedat}{8}
    \delta M(t) &= \overline{\delta M(t)} + \delta M^h(t),\qquad &&\text{with}\quad
    \overline{\delta M(t)} = \int_{\TT^2}\delta M(t,x)\dd x\quad&&&&\text{and}\quad 
    \delta M^h(t) = \sum_{n\in \ZZ^2 \setminus \{0\}} \delta M_n(t) e^{i n\cdot x},\\
    \delta \uu(t) &= \overline{\delta \uu(t)} + \delta \uu^h(t),\qquad &&\text{with}\quad
    \overline{\delta \uu(t)} = \int_{\TT^2}\delta \uu(t,x)\dd x\quad&&&&\text{and}\quad 
    \delta \uu^h(t) = \sum_{n\in \ZZ^2 \setminus \{0\}} \delta \uu_n(t) e^{i n\cdot x}.
\end{alignedat}
\end{equation*}

\noindent Furthermore, $\delta \uu$, $\delta F$ and $\delta M$ keep a higher regularity than the one presented in \eqref{space-where-deltauanddeltad-are-defined}. Indeed, since $\delta \uu$, $\delta F$  and $\delta M$ belong to the energy space given by \eqref{def:weak-sol-energy-space}, this automatically implies that the following time regularity holds true:
\begin{equation*}
	\overline{\delta\uu}\in  \mathcal{C}([T_i, \widetilde T]),\; 	\delta \uu^h \in \mathcal{C}([T_i, \widetilde T],\, \dot H^{-\frac{1}{2}}(\TT^2)),\;\delta F \in \mathcal{C}([T_i,\widetilde  T],\, \dot H^{-\frac{1}{2}}(\TT^2))\;\text{and}\;\delta M \in \mathcal{C}([T_i, \widetilde T],\,H^\frac{1}{2}(\TT^2)),
\end{equation*}
for any $\widetilde T\in [T_i, T_{i+1})$, where the family $(T_i)_{i\in\NN}$ stands for the discrete set given by any time in which a singularity for $(\uu_1,\,F_1,\, M_1)$ or for $(\uu_2,\,F_2,\,M_2)$ occurs (with the exception of $T_0$ which is defined by $T_0 = 0$). Furthermore, we have the following explicit formulations of the distributional time derivatives of the Sobolev norms:
\begin{equation}\label{distr-time-der-of-the-norm}
\begin{alignedat}{4}
		\frac{\dd}{\dd t}& \|\,\delta \uu^h(t)\|_{\Hh^{-\frac{1}{2}}}^2 \,&&=\, 2\big\langle \partial_t \delta \uu^h,\,\delta \uu^h\big\rangle_{\Hh^{-1}\times L^2 }
		= 2\sum_{q\in\ZZ} 2^{-q} \langle \partial_t \Dd_q \delta \uu,\, \Dd_q \delta \uu\rangle_{L^2(\TT^2)}\in L^1_{\rm loc}(T_i,T),
		\\
		\frac{\dd}{\dd t}& \|\,\delta M(t)\|_{H^{\frac{1}{2}}}^2
		\,&&=\,
		\frac{\dd}{\dd t} \|\,\delta M(t)\|_{L^2(\TT^2)}^2
		\,+\,
		\frac{\dd}{\dd t} \|\,\delta M(t)\|_{\Hh^{\frac{1}{2}}}^2
		\,\\ 
		&\, 
		&&=
		2\big\langle \partial_t \delta M,\,\delta M\big\rangle_{L^2\times L^2}
		\,+\,
		2\big\langle \partial_t \delta M,\,\delta M\big\rangle_{L^2\times\Hh^{1} }
		\in L^1_{\rm loc}(T_i,T),
\end{alignedat}
\end{equation}
where the above identities are satisfied in a distributional sense on $\mathcal{D}'((T_i,T_{i+1}))$. 

\noindent
The main step of the uniqueness proof is a standard Osgood type inequality which involves the above function spaces. We formulate this result in the following proposition. 
\begin{prop}\label{prop:estEE}
There exists a time-dependent function $f = f(t)$ which belongs to $L^1(T_i,\,\widetilde T)$, for any $i \in \NN$ and for any $T_i<\widetilde T<T_{i+1}$, such that
\begin{equation*}
	\delta \EE(t) + \int_{T_i}^t \delta \DD(s)\dd s \leq \delta \EE(T_i) + \int_{T_i}^t f(s) \mu(\delta \EE(s) )\dd s,
\end{equation*}
for any time $t\in [T_{i+1}, \widetilde T)$, where $\mu(\cdot)$ is the Osgood modulus of continuity given by
\begin{equation}\label{modulus_of_continuity}
    \mu(r) = r \Big(1+\ln \Big(1+ \frac{1}{r}\Big)\Big).
\end{equation}
\end{prop}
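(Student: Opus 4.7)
The plan is to derive evolution equations for the triple $(\delta \uu,\delta F,\delta M)$ by subtracting the two sets of equations satisfied by the two Struwe-like solutions, apply the dyadic blocks $\Dd_q$, take weighted $L^2$ inner products in the spirit of \eqref{inner_product}, and sum after weighting by $2^{-q}$ (for $\delta\uu^h$ and $\delta F$) or $2^{q}$ (for $\delta M$). The equation for $\delta \uu$ reads schematically
\begin{equation*}
    \partial_t \delta \uu + \uu_1\cdot \nabla \delta\uu + \delta \uu\cdot \nabla \uu_2 -\nu\Delta \delta \uu + \nabla \delta \pre = -\Div(\nabla M_1\odot \nabla\delta M + \nabla \delta M\odot \nabla M_2) + \Div \bigl( \delta W'(F)\,F_1^\top + W'(F_2)\,\delta F^\top\bigr)+\mu_0 \nabla^\top H_{\rm ext}\delta M,
\end{equation*}
and analogously for $\delta F$ and $\delta M$. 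The zero-frequency mode of $\delta\uu$ is handled separately, producing the term $|\overline{\delta \uu}|^2$ in $\delta \EE$; since $\delta F$ has zero spatial average by the divergence-free structure, it lives naturally in the homogeneous space $\Hh^{-1/2}$.

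The key structural step is to exploit the cancellation built into the magnetoviscoelastic coupling, already observed in Section~\ref{sec:existence} for \eqref{energyInequality}: testing the momentum equation against $\delta \uu$ and the LLG equation against $-\Delta \delta M$ (at the dyadic level) leads to compensation between the contribution of $\Div(\nabla M\odot \nabla M)$ in the momentum equation and the convective term $\uu\cdot\nabla M$ in the LLG equation, and likewise between $\Div(W'(F)F^\top)$ in the momentum equation and $\nabla \uu\,F$ in the deformation tensor equation. This cancellation must be reproduced at the $\Hh^{-1/2}$-level via Bony's decomposition \eqref{BONY}; after introducing commutators $[\Dd_q,\Sd_{j-1}\uu_1]\cdot \Dd_j\delta M$ and similar objects controlled by Lemma~\ref{l:commut}, the worst top-order cross terms disappear, leaving only contributions that can be absorbed into the dissipation $\delta \DD$ or estimated by lower-order norms.

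The main obstacle will be producing the Osgood modulus $\mu(r)=r(1+\ln(1+1/r))$ for the remaining nonlinear terms. The hardest candidates are the bilinear products
\begin{equation*}
    \delta \uu\cdot \nabla \uu_2,\qquad \delta \uu\cdot \nabla M_2,\qquad \nabla M_1\odot \nabla \delta M,\qquad |\nabla M|^2\,M\text{-type contributions,}
\end{equation*}
each of which lies at the borderline regularity dictated by \eqref{space-where-deltauanddeltad-are-defined}. The plan is to invoke Lemma~\ref{lemma_product_homogeneous_sobolev_spaces} to place, for example, $\delta \uu\otimes \uu_2$ into $\Hh^{1/2}$ via $\delta \uu\in \Hh^{1/2}$ and $\uu_2\in \Hh^{1}$, and then to gain the logarithmic factor by splitting the dyadic sum at a critical frequency $N = N(t)$ chosen so that $\log(1/\delta\EE)$ appears naturally; this is the standard trick for recovering a log-Lipschitz estimate on rough coefficients. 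The $H^1(L^\infty)$-deficit in two dimensions for $\nabla M$ is exactly what forces $\mu$ rather than a linear modulus, and it is compensated by the additional $L^2(T_i,\widetilde T;H^2)$ regularity of $M$ provided by Definition~\ref{def:weak-sol} away from singular times $T_j$.

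The unit constraint term $|\nabla M|^2 M$ and the quadratic contributions from $\psi'$ are handled by writing $|\nabla M_1|^2 M_1 - |\nabla M_2|^2 M_2$ as a sum of tame paraproducts and applying Lemma~\ref{lemma_product_homogeneous_sobolev_spaces} together with Proposition~\ref{p:embed} to land in $\Hh^{-1/2}$; the quadratic loss in $\nabla M$ is reabsorbed using the smallness of the local energy granted by Lemma~\ref{localEnergyEstimate} on each interval $[T_i,\widetilde T]$. The external force $\mu_0\nabla^\top H_{\rm ext}\delta M$ produces a lower-order contribution bounded by $\|H_{\rm ext}\|_{W^{1,\infty}}\,\delta\EE$. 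Gathering everything, the distributional time derivatives in \eqref{distr-time-der-of-the-norm} legitimate the passage from a dyadic identity to the integral inequality
\begin{equation*}
\delta \EE(t) + \int_{T_i}^t \delta \DD(s)\dd s \leq \delta \EE(T_i) + \int_{T_i}^t f(s)\,\mu(\delta \EE(s))\dd s,
\end{equation*}
with $f$ a sum of squared norms of $(\uu_j,F_j,\nabla M_j)$ in their energy spaces, hence in $L^1(T_i,\widetilde T)$ by \eqref{def:weak-sol-energy-space}.
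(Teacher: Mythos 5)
Your overall architecture (difference system, dyadic $\Hh^{-1/2}$ energy estimate with the zero mode of $\delta\uu$ split off, a frequency cutoff at $N\sim\ln(1+1/\delta\EE)$ to produce the logarithm, Osgood) matches the paper's, but two of your central steps do not survive scrutiny.

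First, the ``key structural step'' you propose --- reproducing at the $\Hh^{-1/2}$ level the cancellations between $\Div(\nabla M\odot\nabla M)$ and $\uu\cdot\nabla M$, and between $\Div(W'(F)F^\top)$ and $\nabla\uu\,F$ --- is not available for the \emph{difference} system. Those cancellations in \eqref{energyInequality} rely on testing the three equations by the matched multipliers $(\uu,\,W'(F),\,-H_\eff)$ of a \emph{single} solution; for the differences the relevant bilinear terms are $\nabla M_1\odot\nabla\delta M+\nabla\delta M\odot\nabla M_2$ against $\nabla\delta\uu$ versus $\uu_1\cdot\nabla\delta M+\delta\uu\cdot\nabla M_2$ against $\Delta\delta M$, which are built from \emph{different} pairings of the two solutions and do not combine into a total divergence; moreover $\delta W'(F)\ne\chi\,\delta F$ for a non-quadratic $W$, so no analogue of the $F$-cancellation exists either, and the weighted inner products \eqref{inner_product} would destroy any exact cancellation in any case. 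The paper does not use such cancellations: every cross term is estimated individually via Lemma~\ref{lemma_product_homogeneous_sobolev_spaces}, interpolation and Young's inequality, with a small fraction of $\delta\DD$ absorbed; the only cancellations exploited are the antisymmetric self-interactions $\sum_q 2^{-q}\int\Sd_{q-1}\vv\cdot\Dd_q\nabla B\cdot\Dd_q B=0$ (Lemma~\ref{comm-est}) and $\langle\overline M_2\wedge\Delta\delta M,\Delta\delta M\rangle=0$.

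Second, you attribute the Osgood modulus to the borderline terms $\delta\uu\cdot\nabla\uu_2$, $\nabla M_1\odot\nabla\delta M$, etc.; in fact all of these close with a \emph{linear} modulus $g(s)\,\delta\EE(s)$ by interpolation, because the rough coefficients $\nabla\uu_2$, $\Delta M_j$ enter only through their $L^2_tL^2_x$ norms. The term that genuinely forces the logarithm --- and which you omit from your list --- is the elastic stress difference $\langle\delta W'(F)\,F_1^\top,\nabla\delta\uu\rangle_{\Hh^{-1/2}}$: the Lipschitz bound on $W'$ only controls $\delta W'(F)$ in $L^2$ by $\|\delta F\|_{L^2}$, which then must be multiplied by $F_1\in H^1\not\hookrightarrow L^\infty$; the high/low splitting $F_1^\top=\Sd_NF_1^\top+(\Id-\Sd_N)F_1^\top$ with $\|\Sd_NF_1\|_{L^\infty}\lesssim(1+\sqrt N)\|F_1\|_{H^1}$ and $N\sim\ln(1+1/\delta\EE)$ is what yields $\mu$ (Lemma~\ref{lemma:log-ineq}). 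Without identifying this term your estimate of the elastic contribution has no closing mechanism. A minor further point: the integrability of $f$ comes from $\Delta M_j\in L^2(T_i,\widetilde T;L^2)$ away from singular times, not from the local energy smallness of Lemma~\ref{localEnergyEstimate}, which plays no role in the uniqueness argument.
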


\noindent The uniqueness of our weak solutions are a straightforward application of the above proposition. Indeed, under the assumption that the mentioned statement holds true, the uniqueness follows by a standard continuation argument, that can be summarized by the following induction process:
\begin{enumerate}[1.]
	\item Prove that the two solutions are unique in the first interval $[0, T_1)$, where $T_1$ is the first value of time in which the singularity of one of the solutions occurs. 
	\item Show that if the two solutions coincide on an interval $[T_i,T_{i+1})$ then the same solutions coincide also at the time $T_{i+1}$. 
	\item Show that if two solutions coincide on the time interval $[T_i,T_{i+1}]$ then the same solutions coincide also on the interval $[T_{i+1}, T_{i+2})$.
\end{enumerate}
The first result turns out to be a direct application of Proposition~\ref{prop:estEE}. Indeed, if $f(t)$ is a locally integrable function in $[0, \widetilde T]$ for any $\widetilde T<T_1$, the Osgood inequality together with the initial condition $\delta \EE(0)=0$ imply that
\begin{equation*}
			\delta \EE(t) = 0 \quad\text{for any}\quad t\in [0, T_1) \quad \Rightarrow (\uu_1(t),\,F_1(t),M_1(t)) = (\uu_2(t),\,F_2(t),\,M_2(t)).
\end{equation*}
The second point of the above induction process is a straightforward consequence of the following weak continuity properties (which hold true by the energy estimate \eqref{energyInequality} and standard duality estimates for the time-derivatives $\partial_t (u,F,M)$): 
\begin{equation*}
	(\uu_1,\,\uu_2) \in \mathcal{C}_w([0,T],\, L^2(\TT^2)), \qquad (F_1,\,F_2) \in \mathcal{C}_w([0,T],\,L^2(\RR^2)), \qquad (M_1,\,M_2)\in \mathcal{C}_w([0,T], H^1(\TT^2)).
\end{equation*}
We hence deduce that $\uu_1(T_{i+1}) = \uu_2(T_{i+1})$, $F_1(T_{i+1}) = F_2(T_{i+1})$ and $M_1(T_{i+1}) = M_2(T_{i+1})$, whenever $(\delta \uu,\delta F,\delta M)$ is null in $[T_i,T_{i+1})$. 

\noindent
Finally, the last induction step  is achieved by remarking that $\delta \EE(T_{i+1}) = 0$. Indeed, we reapply the Osgood lemma together with Proposition~\ref{prop:estEE} on the interval $[T_{i+1}, \widetilde T]$ for $ \widetilde T<T_{i+2}$, to deduce that our solutions coincide also in $[T_{i+1}, T_{i+2})$. 

\noindent In summary, the uniqueness asserted in Theorem~\ref{thm:uniqueness} is satisfied thanks to Proposition~\ref{prop:estEE}, which we prove next.
\begin{proof}[Proof of Proposition~\ref{prop:estEE}]
We begin with considering the difference between the momentum equations of the two solutions, driving the evolution of $\delta \uu$:
\begin{align}\label{deltau-eq}
\begin{split}
	\partial_t \delta \uu + \uu_1\cdot \nabla \delta \uu + \delta \uu\cdot  \nabla \uu_2 - \nu \Delta \delta \uu + \nabla \delta \pre 
	= &\,
	\Div( \nabla M_1 \odot \nabla \delta M) + \Div(\nabla \delta M\odot \nabla M_2) \\
	& +\Div( \delta  W'(F) F_1^\top+ W'(F_2) \delta F^\top )+ \mu_0 \nabla H_{\rm ext} \delta M,
	\end{split}
\end{align}
where $\delta W'(F)= W'(F_1)-W'(F_2)$. Integrating over $(T_i,t)\times \mathbb{T}^2$, we have that
\begin{equation*}
	\overline{\delta \uu (t)} =\overline{\delta \uu (T_i)}+ \mu_0 \int_{T_i}^t \int_{\mathbb{T}^2} \nabla H_{\rm ext} \delta M
	\quad \Rightarrow\quad 
	|\overline{\delta \uu (t)}| 
	\leq 
	|\overline{\delta \uu (T_i)}| +
	|\mu_0| \int_{T_i}^t \| \nabla H_{\rm ext} \|_{L^2} \| \delta M \|_{L^2},
\end{equation*}
which implies 
\begin{equation}\label{average-ineq}
	\frac{1}{2}
	|\overline{\delta \uu(t)}|^2 
	\leq 
	|\overline{\delta \uu(T_i)}|^2 +
	\mu_0^2 \| \nabla H_{\rm ext} \|_{L^2(T_i,t; L^2)}^2\int_{T_i}^t \| \delta M(s) \|_{L^2}^2\dd s.
\end{equation}
The function $\delta \uu^h(t,x)$ belongs to the function space defined in \eqref{def:weak-sol-energy-space}, i.e., 
\begin{equation*}
	\delta \uu^h \in L^\infty(0,T;\,L^2(\TT^2))\cap L^2(0,T;\Hh^1(\TT^2)),\qquad   \partial_t \delta \uu^h \in L^2(T_i,\widetilde T; \Hh^{-1}(\TT^2)),
\end{equation*}
for any $\widetilde T\in [T_i, T_{i+1})$. Recalling that $\delta \uu^h$ is homogeneous, namely $\int_{\TT^2}\delta \uu^h=0$, we apply the operator $\sqrt{-\Delta}^{-1}$ which maps $\Hh^{-1}(\TT^2)$ into $L^2(\TT^2)$, by means of
\begin{equation*}
	\sqrt{-\Delta}^{-1}\delta \uu^h(t,x)  = \sum_{k\in \ZZ^2} \frac{\delta \uu_k^h(t)}{|k|}e^{ ik\cdot x}\in  L^\infty(0,T;\,\Hh^1(\TT^2))\cap L^2(0,\,T;\Hh^2(\TT^2)).
\end{equation*}
Additionally, $\partial_t \sqrt{-\Delta}^{-1}\delta \uu^h = 
 \sqrt{-\Delta}^{-1}\partial_t\delta \uu^h\in L^2(T_i, \widetilde T; L^2(\TT^2))$.
We hence consider a sequence of smooth functions $(\varphi^n)_{n\in\NN} \in \mathfrak{D}([T_i, t)\times\TT^2)$, with $t\in [T_i, T_{i+1})$ such that the following properties hold true
\begin{align}\label{test-fcts-deltau}
	\int_{\TT^2} \varphi^n (s) = 0\quad &\text{for any } s\in [T_i,T_{i+1}),\\
	\varphi^n \rightarrow \sqrt{-\Delta}^{-1}\delta \uu^h(t,x) \quad&\text{in } L^2(T_i,T_{i+1};H^1(\TT^2)),\\
	\partial_t\varphi^n \rightarrow \partial_t \sqrt{-\Delta}^{-1}\delta \uu^h(t,x) \quad&\text{in }L^2(T_i,\, t;L^2(\TT^2)).
\end{align}
Applying identity \eqref{deltau-eq} together with Definition~\ref{def:weak-sol} to the test function $\varphi^n$ and passing to the limit in $n$, we eventually gather that
\begin{align*}
	\int_{T_i}^t \langle \partial_t \delta \uu^h ,\, \delta \uu^h\rangle_{\Hh^{-1}\times L^2}& + \int_{T_i}^t \nu \|\,\nabla \delta \uu\,\|_{\Hh^{-\frac{1}{2}}}^2
	= 
	\int_{T_i}^t
	\Big\{-\langle	\,	\uu_1\cdot\nabla \delta \uu		,	\delta \uu^h	\,	\rangle_{\Hh^{-\frac{1}{2}}}-
		\langle	\,	\delta \uu\cdot\nabla  \uu_2 		,	\delta \uu^h	\,	\rangle_{\Hh^{-\frac{1}{2}}} \\ &-
		\langle	\,	 \nabla M_1 \odot \nabla \delta M		,	\nabla \delta \uu	\,	\rangle_{\Hh^{-\frac{1}{2}}}  -
		\langle	\,	 \nabla \delta M\odot \nabla M_2		,	\nabla \delta \uu	\,	\rangle_{\Hh^{-\frac{1}{2}}} \\ &-
		\langle	\,	  \delta  W'(F) F_1^\top+ W'(F_2) \delta F^\top	,	\nabla \delta \uu	\,	\rangle_{\Hh^{-\frac{1}{2}}}+
		\mu_0\langle	\,	\nabla H_{\rm ext} \delta M	,	\delta \uu^h	\,	\rangle_{\Hh^{-\frac{1}{2}}}
	\Big\},
\end{align*}
where we remark that $\nabla \delta \uu = \nabla \delta \uu^h$. We emphasize that the above identity is also consequence of the following cancellations:
\begin{align*}
	\int_{T_i}^t \int_{\mathbb{T}^2} \partial_t \overline{\delta \uu}\cdot  \varphi^n   = 0\qquad 
	\text{for any } n\in \mathbb{N},
\end{align*}
since $ \partial_t \overline{\delta \uu}$ and $\delta \uu^h$ are orthogonal. Indeed, one can easily remark that $\overline{\delta\uu}\in W^{1,2}(0,T)\hookrightarrow W^{1,2}(0,T; L^2(\mathbb{T}^2))$ from Definition~\ref{def:weak-sol}. Additionally, we recall that $\varphi^n(t,\cdot)$ has null average for any time $t\in [T_i,T)$.

\noindent
We can thus apply the identity \eqref{distr-time-der-of-the-norm} stating that $\langle \delta \uu^h ,\, \partial_t \delta \uu^h\rangle_{\Hh^{-1}\times L^2} =\frac12 \frac{ \dd}{\dd t }\| \delta \uu^h \|_{\Hh^{-1/2}}^2$ in distributional sense. We hence deduce that
\begin{align}\label{uniq-momeq}
\begin{split}
	\frac 12\| \delta \uu^h(t) \|_{\Hh^{-\frac{1}{2}}}^2 + \int_{T_i}^t \nu \|\,\nabla \delta \uu\,\|_{\Hh^{-\frac{1}{2}}}^2
	= &
	\frac 12\| \delta \uu^h(T_i) \|_{\Hh^{-\frac{1}{2}}}^2 +
	\int_{T_i}^t
	\Big\{-\langle	\uu_1\cdot\nabla \delta \uu		,	\delta \uu^h		\rangle_{\Hh^{-\frac{1}{2}}}-
		\langle	\delta \uu\cdot\nabla  \uu_2 		,	\delta \uu^h		\rangle_{\Hh^{-\frac{1}{2}}} \\ &-
		\langle		 \nabla M_1 \odot \nabla \delta M		,	\nabla \delta \uu		\rangle_{\Hh^{-\frac{1}{2}}}-
		\langle		 \nabla \delta M\odot \nabla M_2		,	\nabla \delta \uu	\rangle_{\Hh^{-\frac{1}{2}}}\\ &-
		\langle		  \delta W'( F )F_1^\top+ W'(F_2) \delta F^\top	,	\nabla \delta \uu		\rangle_{\Hh^{-\frac{1}{2}}}+
		\mu_0\langle	\,	\nabla H_{\rm ext} \delta M	,	\delta \uu^h	\rangle_{\Hh^{-\frac{1}{2}}}
	\Big\}.
	\end{split}
\end{align}

\noindent
	We then proceed to estimate each term on the right-hand side. To this end, we will repeatedly make use of the following product law between Sobolev spaces (cf.\ Lemma~\ref{lemma_product_homogeneous_sobolev_spaces}): 
	\begin{alignat*}{8}
		\| u\cdot v \|_{\Hh^{-\frac{1}{2}}(\TT^2)} &\leq C \| u \|_{\Hh^{\frac{1}{2}}(\TT^2)}\| v \|_{L^2(\TT^2)},\qquad&&\text{for any}\quad u\in \Hh^{-\frac{1}{2}}(\TT^2)\quad &&&&\text{and}\quad v\in L^2(\TT^2)\quad\text{with}\quad \int_{\TT^2}v = 0, \\
		\| u\cdot v \|_{\Hh^{-\frac{1}{2}}(\TT^2)} &\leq C \| u \|_{\Hh^{\frac{3}{4}}(\TT^2)}\| v \|_{\Hh^{-\frac{1}{4}}(\TT^2)},\qquad &&\text{for any}\quad u\in \Hh^{\frac{3}{4}}(\TT^2)\quad &&&&\text{and}\quad v\in \Hh^{-\frac{1}{4}}(\TT^2)
	\end{alignat*}
	where the constant $C$ does not depend on $v$ and $u$. Furthermore, in order to implement the estimates of the main convective terms, we need to introduce the following inequality  at the level of homogeneous Sobolev space $\Hh^{-\frac{1}{2}}(\TT^2)$:
	\begin{lemma}\label{comm-est} For any divergence-free vector field $\rm v$ in $\Hh^{1}(\TT^2)$ and any vector field $B$ in $\Hh^{\frac{1}{2}}(\TT^2)$
		\begin{equation*}
			\langle\,  {\rm v}\cdot \nabla B,\, B\,\rangle_{\Hh^{-\frac{1}{2}}(\TT^2)}\,\leq\, C \|\,\nabla {\rm v}\,\|_{L^2(\TT^2)}\|\,\nabla B\,\|_{\Hh^{-\frac{1}{2}}(\TT^2)}\|\,B\,\|_{\Hh^{-\frac{1}{2}}(\TT^2)},
		\end{equation*}
		for a suitable positive constant $C$.
	\end{lemma}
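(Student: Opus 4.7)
The plan is to exploit the Littlewood-Paley representation \eqref{inner_product} of the $\Hh^{-1/2}$ inner product together with the divergence-free structure of ${\rm v}$, so that the estimate, which is strictly sharper than the product law of Lemma~\ref{lemma_product_homogeneous_sobolev_spaces} (the latter would suffer from an $\epsilon$-loss), is obtained via a commutator cancellation. First I would expand
\begin{equation*}
\langle {\rm v}\cdot\nabla B,\,B\rangle_{\Hh^{-\frac{1}{2}}}\,=\,\sum_{q\in\ZZ} 2^{-q}\int_{\TT^2}\Dd_q({\rm v}\cdot\nabla B)\cdot\Dd_q B\,dx,
\end{equation*}
then insert and subtract ${\rm v}\cdot\nabla\Dd_q B$. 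Since $\Div\,{\rm v} = 0$ implies
$\int_{\TT^2}{\rm v}\cdot\nabla\Dd_q B\cdot\Dd_q B\,dx = -\tfrac{1}{2}\int_{\TT^2}(\Div\,{\rm v})|\Dd_q B|^2\,dx = 0$,
the diagonal piece vanishes and the inner product reduces to the pure commutator sum $\sum_q 2^{-q}\int_{\TT^2}[\Dd_q,{\rm v}]\cdot\nabla B\cdot\Dd_q B\,dx$.

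Next, I would apply Bony's decomposition \eqref{BONY} to the product ${\rm v}\cdot\nabla B$ at each block $\Dd_q$, splitting it into the low-high paraproduct, the high-low paraproduct, and the remainder. For the low-high part, the commutator estimate in Lemma~\ref{l:commut} (with $p=2,\ r=2,\ h=\infty$) combined with the two-dimensional Bernstein inequality of Lemma~\ref{l:bern} gives, for $|j-q|\leq 5$,
\begin{equation*}
\|[\Dd_q,\Sd_{j-1}{\rm v}]\Dd_j\nabla B\|_{L^2}\,\leq\, C\, 2^{-q}\|\nabla{\rm v}\|_{L^2}\|\Dd_j\nabla B\|_{L^\infty}\,\leq\, C\, 2^{-q+2j}\|\nabla{\rm v}\|_{L^2}\|\Dd_j B\|_{L^2}.
\end{equation*}
The remaining ``diagonal'' contribution $\Sd_{q-1}{\rm v}\cdot\Dd_q\nabla B$ of the low-high paraproduct vanishes once more against $\Dd_q B$ by applying $\Div\,{\rm v}=0$ to the divergence-free truncation $\Sd_{q-1}{\rm v}$. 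The high-low paraproduct and the remainder are treated in a parallel way using Bernstein's and H\"older's inequalities to redistribute the gradient on $B$.

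Inserting these bounds, multiplying by $2^{-q}\|\Dd_q B\|_{L^2}$ and summing over the admissible dyadic pairs yields
\begin{equation*}
|\langle {\rm v}\cdot\nabla B,\,B\rangle_{\Hh^{-\frac{1}{2}}}|\,\leq\, C\|\nabla{\rm v}\|_{L^2}\sum_{q\in\ZZ}\|\Dd_q B\|_{L^2}^2,
\end{equation*}
and a direct Cauchy-Schwarz on the dyadic index,
\begin{equation*}
\sum_{q\in\ZZ}\|\Dd_q B\|_{L^2}^2\,\leq\,\Big(\sum_{q\in\ZZ} 2^{q}\|\Dd_q B\|_{L^2}^2\Big)^{\!\frac12}\Big(\sum_{q\in\ZZ} 2^{-q}\|\Dd_q B\|_{L^2}^2\Big)^{\!\frac12}\,\sim\,\|\nabla B\|_{\Hh^{-\frac{1}{2}}}\|B\|_{\Hh^{-\frac{1}{2}}},
\end{equation*}
yields the claimed inequality. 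The main obstacle is the treatment of the off-diagonal pieces of Bony's decomposition, where no commutator cancellation is directly available: one must carefully balance the $2^{-q}$ gained from Lemma~\ref{l:commut} against the $2^q$ cost of transferring the derivative from $B$ onto ${\rm v}$, and re-invoke the divergence-free property at the level of the low-frequency factor via integration by parts, so as to avoid the $\epsilon$-loss of a plain product law.
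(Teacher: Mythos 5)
Your proposal follows essentially the same route as the paper's proof: expand the $\Hh^{-1/2}$ pairing via \eqref{inner_product}, apply Bony's decomposition \eqref{BONY}, kill the diagonal term $\Sd_{q-1}{\rm v}\cdot\Dd_q\nabla B$ using the divergence-free condition, control the commutator piece with Lemma~\ref{l:commut} combined with Bernstein's inequality, and close by Cauchy--Schwarz over the dyadic index. One small remark: for the remainder term $\sum_{j\ge q-5}\Dd_q(\Dd_j{\rm v}\cdot\Sd_{j+2}\nabla B)$ no further integration by parts or divergence-free cancellation is needed (contrary to what your last paragraph suggests) --- the paper closes it purely with the Bernstein inequality $L^1\to L^2$, the characterization of negative-regularity norms through $\Sd_q$ (Proposition~\ref{prop:s_negative}), and Young's inequality for the resulting convolution in $q-j$.
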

	\begin{proof}
	    This follows from Lemma~\ref{lemma6.2} in Section~\ref{sec5}.
	\end{proof}
	
	\noindent
	Coming back to our problem, we remark that $\Div\,\delta \uu\,=\,0$. 
	We first split the following term of \eqref{uniq-momeq} into its homogeneous and non-homogeneous components:
	\begin{equation*}
	    \langle	\,	\uu_1\cdot\nabla \delta \uu^h		,	\delta \uu^h	\,	\rangle_{\Hh^{-\frac{1}{2}}}
	    =
	    \langle	\,	\overline{\uu}_1\cdot\nabla \delta \uu^h		,	\delta \uu^h	\,	\rangle_{\Hh^{-\frac{1}{2}}}
	    +\langle	\,	\uu_1^h\cdot\nabla \delta \uu^h		,	\delta \uu^h	\,	\rangle_{\Hh^{-\frac{1}{2}}}.
	\end{equation*}
	The first is dealt with by
	\begin{equation*}
	    \big|
	    \langle	\,	\overline{\uu}_1\cdot\nabla \delta \uu^h		,	\delta \uu^h	\,	\rangle_{\Hh^{-\frac{1}{2}}}
	    \big|
	    \leq 
	    |\overline{\uu}_1|\| \nabla \delta \uu^h \|_{\Hh^{-\frac{1}{2}}}
	    \| \delta \uu^h \|_{\Hh^{-\frac{1}{2}}}
	\lesssim 
	   \| \uu_1 \|_{L^2}^2\| \delta \uu^h \|_{\Hh^{-\frac{1}{2}}}^2
	   +
	   \ee
	   \|\nabla \delta \uu \|_{\Hh^{-\frac{1}{2}}}^2.
	\end{equation*}
	Thus, in accordance with Lemma~\ref{comm-est} together with the free divergence condition on $\uu_1$, we gather that
	\begin{align*}	
	    \big|
		\langle	\,	\uu_1^h\cdot\nabla \delta \uu^h		,	\delta \uu^h	\,	\rangle_{\Hh^{-\frac{1}{2}}}
		\big|
		\,
		\lesssim
		\,
		\|\,\nabla			\uu_1^h 	\,\|_{L^2	}
		\|\,			\delta 	\uu^h		\,\|_{\Hh^{-\frac{1}{2}}	}
		\|\,	\nabla 	\delta 	\uu^h		\,\|_{\Hh^{-\frac{1}{2}}	}
		\,
		\lesssim	
		\,
		\|\,\nabla			\uu_1 	\,\|_{L^2	}^2
		\|\,			\delta 	\uu^h		\,\|_{\Hh^{-\frac{1}{2}}	}^2\,+\,
		\ee
		\|\,	\nabla	\delta 	\uu^h		\,\|_{\Hh^{-\frac{1}{2}}	}^2,
	\end{align*}
	for a suitable small constant $\ee$ to be fixed later.  Next, the second term on the right-hand side of \eqref{uniq-momeq} is estimated as follows: we first split it into
	\begin{equation*}
		\langle	\,	\delta \uu\cdot\nabla  \uu_2 		,\,	\delta \uu^h	\,	\rangle_{\Hh^{-\frac{1}{2}}}
		= 
		\langle	\,	\overline{\delta \uu}\cdot\nabla  \uu_2 		,\,	
				\delta \uu^h	\,	\rangle_{\Hh^{-\frac{1}{2}}}+
		\langle	\,	\delta \uu^h\cdot\nabla  \uu_2 		,\,	
				\delta \uu^h	\,	\rangle_{\Hh^{-\frac{1}{2}}},
	\end{equation*}
	hence we have first that
	\begin{align*}
		|\langle	\,	\overline{\delta \uu}\cdot\nabla  \uu_2 		,\,	
								&\delta \uu^h	\,	\rangle_{\Hh^{-\frac{1}{2}}}|
		\leq 
		|\overline{\delta \uu}| 
		\| \nabla  \uu_2 \|_{\Hh^{-\frac{1}{2}}}
		\| \delta \uu^h \|_{\Hh^{-\frac{1}{2}}}
		\lesssim
		\| \uu_2 \|_{L^2}^\frac{1}{2}\| \nabla \uu_2 \|_{L^2}^\frac{1}{2}
		\Big(
		 |\overline{\delta \uu}|^2+ \| \delta \uu^h \|_{\Hh^{-\frac{1}{2}}}^2
		\Big),
	\end{align*}
	while
	\begin{align*}	
		|\langle	\,	\delta \uu^h\cdot\nabla  \uu_2 		,\,	\delta \uu^h	\,	
															\rangle_{\Hh^{-\frac{1}{2}}}|
		=& \,
			|\langle	\,	\delta \uu^h\cdot\nabla  \uu_2^h 		,\,	\delta \uu^h	\,	
															\rangle_{\Hh^{-\frac{1}{2}}}|
		=
		|\langle	\,	\delta \uu^h\otimes  \uu_2^h 		,\,	\nabla\delta \uu	\,	
															\rangle_{\Hh^{-\frac{1}{2}}}|
		\,\lesssim\,
		\|\,			\delta 	\uu^h	\otimes \uu_2^h 	\,\|_{\Hh^{-\frac{1}{2}}	}
		\|\,			\nabla \delta 	\uu						\,\|_{\Hh^{-\frac{1}{2}}	}\\
		\lesssim &
		\|\,			\delta 	\uu^h		\,\|_{\Hh^{-\frac{1}{4}}}
		\|\, 		\uu_2^h 	\,\|_{\Hh^\frac{3}{4}}
		\|\,		\nabla	\delta 	\uu		\,\|_{\Hh^{-\frac{1}{2}}	}
		\,\lesssim\, 
		\|\,			\delta 	\uu^h		\,\|_{\Hh^{-\frac{1}{2}}}^\frac{3}{4}	
		\|\, 		\uu_2^h 	\,\|_{L^2}^\frac{1}{4}
		\|\, \nabla	\uu_2^h 	\,\|_{L^2}^\frac{3}{4}
		\|\,		\nabla	\delta 	\uu		\,\|_{\Hh^{-\frac{1}{2}}	}^\frac{5}{4}\\
		\lesssim &	
		\| \uu_2 \|_{L^2}^\frac{2}{3}
		\|\,\nabla			\uu_2 	\,\|_{L^2	}^2
		\|\,			\delta 	\uu^h		\,\|_{\Hh^{-\frac{1}{2}}	}^2\,+\,
		\ee
		\|\,	\nabla	\delta 	\uu		\,\|_{\Hh^{-\frac{1}{2}}	}^2.
	\end{align*}
	Now, we consider the contribution of the magnetization in \eqref{uniq-momeq} given by 
	\begin{equation*}
	\begin{aligned}
	    \big|
		\langle	\,	 \nabla M_1 \odot \nabla \delta M&	+\nabla \delta M\odot \nabla M_2	,	\nabla \delta \uu	\,	\rangle_{\Hh^{-\frac{1}{2}}}
		\big|
		\lesssim
		\|\,			\nabla M_1 \odot \nabla \delta M +\nabla \delta M\odot \nabla M_2 	\,\|_{\Hh^{-\frac{1}{2}}	}
		\|\,			\nabla \delta 	\uu						\,\|_{\Hh^{-\frac{1}{2}}	}\\
		&\lesssim
		\|\,			\nabla \delta M	\,\|_{\Hh^{-\frac{1}{4}}}
		\|\, 		\nabla (M_1,\,M_2) 	\,\|_{\Hh^\frac{3}{4}}
		\|\,		\nabla	\delta 	\uu		\,\|_{\Hh^{-\frac{1}{2}}	}\\
		&\lesssim\, 
		\|\,			\nabla \delta M		\,\|_{\Hh^{-\frac{1}{2}}}^\frac{3}{4}	
		\|\, 		\nabla (M_1,\,M_2)	\,\|_{L^2}^\frac{1}{4}
		\|\, \Delta(M_1,\,M_2)	\,\|_{L^2}^\frac{3}{4}
		\|\,		\nabla	\delta 	\uu		\,\|_{\Hh^{-\frac{1}{2}}	}
		\| \Delta \delta M \|_{\Hh^{-\frac{1}{2}}	}^\frac{1}{4}\\
		&\lesssim	
		\|\nabla (M_1,\,M_2) \|_{L^2}^\frac{2}{3}
		\|\,\Delta(M_1,\,M_2) 	\,\|_{L^2	}^2
		\|\,		\nabla	\delta 	M	\,\|_{\Hh^{-\frac{1}{2}}	}^2\,+\,
		\ee
		\|\,	\nabla	\delta 	\uu		\,\|_{\Hh^{-\frac{1}{2}}	}^2+
		\ee
		\|\,	\Delta	\delta 	M		\,\|_{\Hh^{-\frac{1}{2}}	}^2.
	\end{aligned}
	\end{equation*}
	The following notations have been introduced:
	$$
	    \|\, 		\nabla (M_1,\,M_2) 	\,\|_{\Hh^\frac{3}{4}} = 
	    \|\, 		\nabla M_1 	\,\|_{\Hh^\frac{3}{4}}+
	    \|\, 		\nabla M_2 	\,\|_{\Hh^\frac{3}{4}}
	,\quad  
	     \|\, 		\nabla (M_1,\,M_2) 	\,\|_{L^2} = 
	    \|\, 		\nabla M_1 	\,\|_{L^2}+
	    \|\, 		\nabla M_2 	\,\|_{L^2},
	$$
	with further $ \|\, 		\Delta (M_1,\,M_2) 	\,\|_{L^2} = 
	    \|\, 		\Delta M_1 	\,\|_{L^2}+
	    \|\, 		\Delta M_2 	\,\|_{L^2}$.
	We now state that the following inequality, whose proof is postponed to Lemma~\ref{lemma:log-ineq}:
	\begin{equation}\label{log-ineq}
	\begin{aligned}
	    	\Big|
		\langle	\,	 &\delta W'(F) F_1^\top+ W'(F_2) \delta F^\top	,	\nabla \delta \uu	\,	\rangle_{\Hh^{-\frac{1}{2}}}
		\Big|
		\lesssim
		(1+\| F_2 \|_{L^2})^6\| (F_1,\,F_2) \|_{H^1}^2
		\delta \EE(t)
		\ln\Big(1+\frac{1}{\delta \EE(t)}\Big)
		\\&+\,
		C
		\Big(
		\| (F_1,\,F_2,\,\uu_1,\,\uu_2)\,\|_{H^1}^\frac{3}{2}
		+
		(1+\| F_2 \|_{L^2})^6\| (F_1,\,F_2) \|_{H^1}^2
		\Big)
		\delta \EE(t)
		\,
		+\,
		\ee
		\|\,	\nabla	\delta 	\uu		\,\|_{H^{-\frac{1}{2}}	}^2+
		\ee
		\|\,	\nabla	\delta 	F		\,\|_{H^{-\frac{1}{2}}	}^2.
	\end{aligned}
	\end{equation}
	Finally, we consider the contribution of the external force, which is estimated as follows:
	\begin{equation*}
	\begin{aligned}
		\big|
		\mu_0\langle	\,	\nabla H_{\rm ext} \delta M	,	\delta \uu^h	\,	\rangle_{\Hh^{-\frac{1}{2}}}
		\big|
		&=
		\big|\mu_0\langle	\,	\nabla H_{\rm ext} \overline{\delta M}	,	\delta \uu^h	\,	\rangle_{\Hh^{-\frac{1}{2}}}
		+\mu_0\langle	\,	\nabla H_{\rm ext} \delta M^h	,	\delta \uu^h	\,	\rangle_{\Hh^{-\frac{1}{2}}}
		\big|\\
		&\lesssim
		\| \nabla H_{\rm ext} \|_{\Hh^{-\frac{1}{2}}}
		| \overline{\delta M}|
		\| 	\delta \uu^h \|_{\Hh^{-\frac{1}{2}}}
		+\| \nabla H_{\rm ext}\delta M^h \|_{\Hh^{-\frac{1}{2}}}
		\| 	\delta \uu^h \|_{\Hh^{-\frac{1}{2}}}
		\\
		&\lesssim
		\| H_{\rm ext} \|_{L^2}^\frac{1}{2}
		\|\nabla H_{\rm ext} \|_{L^2}^\frac{1}{2}
		\| \delta M\|_{L^2}
		\| 	\delta \uu^h \|_{\Hh^{-\frac{1}{2}}}
		+\| \nabla H_{\rm ext} \|_{L^2} \|\delta M^h \|_{\Hh^{\frac{1}{2}}}
		\| 	\delta \uu^h \|_{\Hh^{-\frac{1}{2}}}
		\\
		&\lesssim
		\big(
		\| H_{\rm ext} \|_{L^2}^\frac{1}{2}
		\|\nabla H_{\rm ext} \|_{L^2}^\frac{1}{2}
		+\| \nabla H_{\rm ext} \|_{L^2}
		\big)
		\big(\| \delta M\|_{H^\frac{1}{2}}^2+\| 	\delta \uu^h \|_{\Hh^{-\frac{1}{2}}}^2\big).
	\end{aligned}
	\end{equation*}
	Coupling the previous inequalities together with \eqref{average-ineq} and identity \eqref{uniq-momeq}, we deduce that there exist a constant $C_1>0$ and a function $g_1 \in L^1(T_i,\,\widetilde T)$, for any $i =1, \dots N$ and $\widetilde T<T_{i+1}$, such that the following inequality holds true
	\begin{equation}\label{ineq1}
	\begin{aligned}
		\frac 12 |\overline{\delta \uu}(t)|^2ds 
		&+ 
		\frac 12\|\,\delta \uu^h(t)\,\|_{\Hh^{-\frac{1}{2}}}^2+
		\nu \int_{T_i}^t \|\,\nabla \delta \uu(s)\,\|_{\Hh^{-\frac{1}{2}}}^2ds\leq 
		\\
		&\leq 
		\frac 12\|\,\delta \uu(T_i)\,\|_{\Hh^{-\frac{1}{2}}}^2 
		\,+\,
		\int_{T_i}^t
		g_1(s)
		\mu(\delta \EE(s))
		d s
		+C_1\ee \int_{T_i}\delta \DD(s)d s,
	\end{aligned}
	\end{equation}
	where the inequality is well defined  for any $i \in \NN$ and for any  $t \in (T_i,\,T_{i+1})$.
	
	\noindent
	We now take into account the difference $\delta F=F_1-F_2$. We begin with formulating the evolutionary equation satisfied by $\delta F$:
	\begin{equation*}
	\begin{aligned}
		\partial_t \delta F  + \delta \uu \cdot \nabla  F_1+  \uu_2 \cdot \nabla  \delta  F - \nabla \delta \uu F_1  -  \nabla \uu_2 \delta F= \kappa \Delta \delta F
	\end{aligned}
	\end{equation*}
	We then proceed similarly as for proving identity \eqref{uniq-momeq}, obtaining:
	\begin{align}\label{inertia}
	\begin{split}
		\frac 12\| \delta F(t)			\|_{\Hh^{-\frac{1}{2}}}^2+
		\kappa
		\int_{T_i}^t
		\|\nabla \delta F(s)\|_{\Hh^{-\frac{1}{2}}}^2d s
		=&
		\frac 12\| \delta F(T_i)			\|_{\Hh^{-\frac{1}{2}}}^2
		-
		\int_{T_i}^t
		\Big\{
		\langle	\delta	\uu\cdot \nabla F_1	,	\delta F		\,\rangle_{\Hh^{-\frac{1}{2}}}-
		\langle		\uu_2 \cdot \nabla	\delta F	,	\delta F		\rangle_{\Hh^{-\frac{1}{2}}}\\ &+
		\langle		\nabla \delta \uu F_1	,	\delta F		\rangle_{\Hh^{-\frac{1}{2}}}+
		\langle	\nabla \uu_2 \delta F	,	\delta F		\rangle_{\Hh^{-\frac{1}{2}}}
		\Big\}(s)\dd s.
	\end{split}
	\end{align}
	We then proceed by estimating each term on the right-hand side. First we have
	\begin{equation*}
		\langle	\delta	\uu\cdot \nabla F_1	,	\delta F		\,\rangle_{\Hh^{-\frac{1}{2}}} = 
		\langle	\overline{\delta	\uu}\cdot \nabla F_1	,	\delta F		\,\rangle_{\Hh^{-\frac{1}{2}}} + 
		\langle	\delta	\uu^h\cdot \nabla F_1	,	\delta F		\,\rangle_{\Hh^{-\frac{1}{2}}},
	\end{equation*}		
	thus
	\begin{equation*}
	\begin{aligned}
		|\langle	\overline{\delta	\uu}\cdot \nabla F_1	,	\delta F		\,\rangle_{\Hh^{-\frac{1}{2}}}|
		&\leq 
		| \overline{\delta	\uu} |
		\| \nabla F_1 \|_{\Hh^{-\frac{1}{2}}}
		\| \delta F \|_{\Hh^{-\frac{1}{2}}}\\
		&\lesssim 
		|\overline{\delta	\uu}|
		\| F_1 \|_{L^2}^\frac{1}{2}\| \nabla F_1 \|_{L^2}^\frac{1}{2}
		\| \delta F \|_{\Hh^{-\frac{1}{2}}}
		\lesssim 
		\| F_1 \|_{L^2}^\frac{1}{2}\| \nabla F_1 \|_{L^2}^\frac{1}{2}
		\Big(
		|\overline{\delta	\uu}|^2+
		\| \delta F \|_{\Hh^{-\frac{1}{2}}}^2
		\Big),
	\end{aligned}
	\end{equation*}
	as well as
	\begin{equation*}
	\begin{aligned}
	|\langle	\delta	\uu^h\cdot \nabla F_1	,	\delta F		\,\rangle_{\Hh^{-\frac{1}{2}}}|
	&=
	|\langle	\delta	\uu^h\otimes F_1	,	 \nabla\delta F		\,\rangle_{\Hh^{-\frac{1}{2}}}|
	\leq
	\| \delta	\uu^h\otimes F_1	 \|_{\Hh^{-\frac 12}} 
	\| \nabla \delta F \|_{\Hh^{-\frac{1}{2} }}\\
	&\lesssim
	\| \delta	\uu^h \|_{\Hh^{-\frac 14}}
	\| F_1	 \|_{\Hh^{\frac 34}} 
	\| \nabla \delta F \|_{\Hh^{-\frac{1}{2} }}
	\lesssim
	\| \delta	\uu^h \|_{\Hh^{-\frac 12}}^\frac{3}{4}
	\| F_1	 \|_{L^2}^\frac{1}{4} 
	\|\nabla F_1	 \|_{L^2}^\frac{3}{4}
	\| \nabla \delta F \|_{\Hh^{-\frac{1}{2} }}
	\|\nabla  \delta	\uu \|_{\Hh^{-\frac 12}}^\frac{1}{4}\\
	&\lesssim
	\| F_1	 \|_{L^2}^\frac{2}{3}
	\|\nabla F_1	 \|_{L^2}^2
	\| \delta	\uu^h \|_{\Hh^{-\frac 12}}^2
	+
	\ee
	\| \nabla \delta F \|_{\Hh^{-\frac{1}{2} }}^2
	+
	\ee
	\| \nabla \delta	\uu \|_{\Hh^{-\frac 12}}^2.
	\end{aligned}
	\end{equation*}
	Next, we split the second term of \eqref{inertia} into 
	$ \langle		\uu_2\cdot \nabla \delta  F	,	\delta F		 \rangle_{\Hh^{-\frac{1}{2}}} = 
	\langle		\overline{\uu}_2\cdot \nabla \delta  F	,	\delta F		 \rangle_{\Hh^{-\frac{1}{2}}} + 
	\langle		\uu_2^h\cdot \nabla \delta  F	,	\delta F		 \rangle_{\Hh^{-\frac{1}{2}}}$. Therefore
	\begin{equation*}
	    \big|\langle	\overline{\uu}_2\cdot \nabla \delta  F	,	\delta F		 \rangle_{\Hh^{-\frac{1}{2}}}\big|
	    \lesssim
	    \big| \overline{\uu}_2 \big| 
	    \| \nabla \delta  F \|_{\Hh^{-\frac{1}{2}}}
	    \| \delta F \|_{\Hh^{-\frac{1}{2}}}
	    \lesssim
	    \| \uu_2 \|_{L^2}^2\| \delta F \|_{\Hh^{-\frac{1}{2}}}^2 + \varepsilon 
	    \| \nabla \delta F \|_{\Hh^{-\frac{1}{2}}}^2,
	\end{equation*}
	as well as
	\begin{equation*}
	\begin{aligned}
	|\langle		\uu_2^h\cdot \nabla \delta  F	,	\delta F		\,\rangle_{\Hh^{-\frac{1}{2}}}|
	&=
	|\langle		\uu_2^h\otimes \delta F	,	 \nabla\delta F		\,\rangle_{\Hh^{-\frac{1}{2}}}|
	\leq
	\| \uu_2^h\otimes\delta F	 \|_{\Hh^{-\frac 12}} 
	\| \nabla \delta F \|_{\Hh^{-\frac{1}{2} }}\\
	&\lesssim
	\| \delta F \|_{\Hh^{-\frac 14}}
	\| \uu_2^h	 \|_{\Hh^{\frac 34}} 
	\| \nabla \delta F \|_{\Hh^{-\frac{1}{2} }}
	\lesssim
	\| \delta	F \|_{\Hh^{-\frac 12}}^\frac{3}{4}
	\| \uu_2^h	 \|_{L^2}^\frac{1}{4} 
	\|\nabla \uu_2^h	 \|_{L^2}^\frac{3}{4}
	\| \nabla \delta	F \|_{\Hh^{-\frac 12}}^\frac{5}{4}\\
	&\lesssim
	\| \uu_2	 \|_{L^2}^\frac{2}{3}
	\|\nabla \uu_2	 \|_{L^2}^2
	\| \delta	F \|_{\Hh^{-\frac 12}}^2
	+
	\ee
	\| \nabla \delta F \|_{\Hh^{-\frac{1}{2} }}^2,
	\end{aligned}
	\end{equation*}
	where we recall that $\| \uu_2^h \|_{L^2}\leq 	\| \uu_2	 \|_{L^2}$ and 
	$\nabla \uu_2^h = \nabla \uu_2	$. 
	Similarly, using the free divergence condition $\Div F_i^\top = 0$, for $i=1,2$, we remark that
	\begin{equation*}
	\begin{aligned}
	    \big|\langle		\nabla \delta \uu F_1	,	\delta F		\rangle_{\Hh^{-\frac{1}{2}}}\big| 
	    &= 
	    \big|\langle  \delta \uu^h \otimes F_1	, \nabla \delta F		\rangle_{\Hh^{-\frac{1}{2}}}\big|
	    \lesssim 
	    \| \delta \uu^h \otimes F_1 \|_{\Hh^{-\frac{1}{2}}}
	    \|  \nabla \delta F \|_{\Hh^{-\frac{1}{2}}}\\
	    &\lesssim 
	    \| \delta \uu^h \|_{\Hh^{-\frac{1}{4}}}
	    \| F_1 \|_{\Hh^{\frac{3}{4}}}
	     \|  \nabla \delta F \|_{\Hh^{-\frac{1}{2}}}
	    \lesssim 
	    \| F_1 \|_{L^2}^\frac{1}{4}
	    \| \nabla F_1 \|_{L^2}^\frac{3}{4}
	    \| \delta \uu^h \|_{\Hh^{-\frac{1}{2}}}^\frac{3}{4}
	    \|\nabla \delta \uu^h \|_{\Hh^{-\frac{1}{2}}}^\frac{1}{4}
	    \|  \nabla \delta F \|_{\Hh^{-\frac{1}{2}}}\\
	    &\lesssim 
	    \| F_1 \|_{L^2}^\frac{2}{3}
	    \| \nabla F_1 \|_{L^2}^2
	    \| \delta \uu^h \|_{\Hh^{-\frac{1}{2}}}^2+
	    \varepsilon
	    \Big(
	    \|\nabla \delta \uu \|_{\Hh^{-\frac{1}{2}}}^2+
	    \|  \nabla \delta F \|_{\Hh^{-\frac{1}{2}}}^2
	    \Big),
	\end{aligned}
	\end{equation*}
	with moreover
	\begin{equation*}
	\begin{aligned}
	|\langle	\nabla \uu_2 \delta F	,	\delta F		\rangle_{\Hh^{-\frac{1}{2}}}|
	&\lesssim 
	\| \nabla \uu_2 \delta F \|_{\Hh^{-\frac{1}{2}}}
	\| \delta F \|_{\Hh^{-\frac{1}{2}}}
	\lesssim 
	\| \nabla \uu_2 \|_{L^2}
	\| \delta F \|_{\Hh^{\frac{1}{2}}}
	\| \delta F \|_{\Hh^{-\frac{1}{2}}}
	\\
    &\lesssim
	\|\nabla \uu_2	 \|_{L^2}^2
	\| \delta	F \|_{\Hh^{-\frac 12}}^2
	+
	\ee
	\| \nabla \delta F \|_{\Hh^{-\frac{1}{2} }}^2.
	\end{aligned}
	\end{equation*}
	Summarizing again the previous inequalities together with \eqref{inertia}, we deduce that there exist a constant $C_2>0$ and a function $g_2 \in L^1(T_i,\,T_{i+1})$, for any $i =1, \dots N$, such that the following inequality holds true
	\begin{equation}\label{ineq2}
		\frac 12 \|\,\delta F(t)\,\|_{\Hh^{-\frac{1}{2}}}^2+
		\kappa \int_{T_i}^t\|\,\nabla \delta F(s)\,\|_{\Hh^{-\frac{1}{2}}}^2\dd s
		\leq 
		\frac 12 \|\,\delta F(T_i)\,\|_{\Hh^{-\frac{1}{2}}}^2+
		\int_{T_i}^t
		g_2(s)
		\delta \EE(s)
		\dd s
		+C_2\ee \int_{T_i}^t\delta \DD(s)\dd s,
	\end{equation}
	where the inequality is well defined  for any $i \in \NN$ and for any  $t \in (T_i,\,T_{i+1})$.
	
	\noindent	We now take into account the difference $\delta M$ between the two magnetization fields in \eqref{main_equation}. Denoting by $\delta \psi'(M) := \psi'(M_1)-\psi'(M_2)$, we first formulate the differential equation satisfied by $\delta M$:
	\begin{align}\label{eq:deltaM}
	\begin{split}
		\partial_t \delta M + &\uu_1 \cdot \nabla \delta M
		\!+\! \delta \uu \cdot \nabla M_2 \!-\! \Delta \delta M  = - \delta M \wedge \Delta M_1 - M_2 \wedge \Delta \delta M - \mu_0 \delta M \wedge H_{\rm ext} 
		 + (\nabla \delta M: \nabla M_1)M_1\\ &+
		(\nabla M_2 :\nabla \delta M)M_1 + |\nabla M_2|^2\delta M 
		-\mu_0(\delta M\cdot H_{\rm ext})M_1 -\mu_0(M_2\cdot H_{\rm ext})\delta M
		+ \delta M\wedge \psi'(M_1)\\
		&+ M_2\wedge \delta \psi'(M) +\delta M\wedge M_1 \wedge \psi'(M_1)
		+ M_2\wedge \delta M \wedge \psi'(M_1)
		+ M_2\wedge  M_2 \wedge \delta \psi'(M).
	\end{split}
	\end{align}
	As a-priori estimate, we take the $L^2$ scalar product of this equation with $\delta M$. Thus, recalling that $\uu_1$ has null divergence, we eventually gather that
	\begin{align}\label{deltaM-L2}
		\begin{split}
			\frac 12 \| \delta M&	(t)		\|_{L^2}^2 +
			\int_{T_i}^t\|\nabla \delta M(s)\|_{L^2}^2
			\dd s
			=
			\frac 12 \| \delta M	(T_i)		\|_{L^2}^2
			-
			\int_{[T_i,t)\times\TT^2}\hspace{-0.8cm}
			\delta \uu \cdot \nabla M_2\cdot \delta M
		-
		\int_{[T_i,t)\times \TT^2}\hspace{-0.8cm}
		M_2 \wedge \Delta \delta M\cdot \delta M \\
		&+
		\int_{[T_i,t)\times\TT^2}\hspace{-0.8cm}
		 (\nabla \delta M: \nabla M_1)M_1\cdot \delta M
		 +
		 \int_{[T_i,t)\times\TT^2}\hspace{-0.8cm}
		 (\nabla  M_2: \nabla \delta  M)M_1\cdot \delta M 
		 +
		 \int_{[T_i,t)\times\TT^2}\hspace{-0.8cm}
		 |\nabla M_2|^2|\delta M |^2\\
		 &- \mu_0\int_{[T_i,t)\times\TT^2}\hspace{-0.8cm}
		(\delta M\cdot H_{\rm ext})M_1\cdot \delta M 
		 - \mu_0\int_{[T_i,t)\times\TT^2}\hspace{-0.8cm}
		(M_2\cdot H_{\rm ext})|\delta M |^2
		 +\int_{[T_i,t)\times\TT^2}\hspace{-0.8cm}
		(M_2\wedge\delta \psi'(M))\cdot \delta M \\
		 &+\int_{[T_i,t)\times\TT^2}\hspace{-0.8cm}
		(M_2\wedge \delta M \wedge \psi'(M_1)) \cdot \delta M)
		 +\int_{[T_i,t)\times\TT^2}\hspace{-0.8cm}
		(M_2\wedge  M_2 \wedge \delta \psi'(M))  \cdot \delta M).
		\end{split}
	\end{align}
	We proceed estimating each term on the right-hand side of the previous inequality. The first inner product we deal with is
\begin{equation*}
	\int_{\TT^2}
				\delta \uu \cdot \nabla M_2\cdot \delta M
	= 
	\int_{\TT^2}
			\overline{\delta \uu} \cdot \nabla M_2\cdot \delta M +
			\int_{\TT^2}
				\delta \uu^h \cdot \nabla M_2\cdot \delta M,
\end{equation*}
so that
\begin{equation*}
	\Big|
	\int_{\TT^2}
			\overline{\delta \uu} \cdot \nabla M_2\cdot \delta M
    \Big|
	\leq 
	| \overline{\delta \uu} | \| \nabla M_2 \|_{L^2} \| \delta M \|_{L^2}
	\leq 
	 \| \nabla M_2 \|_{L^2} 
	 \Big( | \overline{\delta \uu} |^2 + \| \delta M \|_{L^2}^2
	 \Big)
\end{equation*}
and
\begin{equation*}
		\begin{aligned}
			\Big|\int_{\TT^2}
				\delta \uu^h \cdot \nabla M_2\cdot \delta M
			\Big|
			&\leq
			\| \delta \uu^h \|_{L^2}
			\| \nabla M_2 \|_{L^4}
			\| \delta M \|_{L^4}
			\lesssim
			\| \delta \uu^h \|_{\Hh^{-\frac{1}{2}}}^\frac{1}{2}
			\|\nabla \delta \uu \|_{\Hh^{-\frac{1}{2}}}^\frac{1}{2}
			\| \nabla M_2 \|_{L^2}^\frac{1}{2}
			\| \Delta M_2 \|_{L^2}^\frac{1}{2}
			\| \delta M \|_{L^2}^\frac{1}{2}
			\| \nabla \delta M \|_{L^2}^\frac{1}{2}\\
			&\lesssim
			\| \nabla M_2 \|_{L^2}^2
			\| \Delta M_2 \|_{L^2}^2
			\| \delta \uu^h \|_{\Hh^{-\frac{1}{2}}}^2
			+
			\| \delta M \|_{L^2}^2
			+
			\ee
			\|\nabla \delta \uu \|_{\Hh^{-\frac{1}{2}}}^2
			+
			\ee
			\| \nabla \delta M \|_{L^2}^2
			.
		\end{aligned}
	\end{equation*}
	Secondly, we take into account the precession contribution of the LLG equation, that is
	\begin{equation*}
		\begin{aligned}
			\Big|\int_{\TT^2}M_2 \wedge \Delta \delta M\cdot \delta M\Big|
			& = \Big| \sum_{i=1}^2 \int_{\TT^2}M_2 \wedge \partial_{ii}^2 \delta M\cdot \delta M\Big| \\
			&= \Big| -\sum_{i=1}^2 \int_{\TT^2}\partial_i M_2 \wedge \partial_{i} \delta M\cdot \delta M-\sum_{i=1}^2 \int_{\TT^2} \underbrace{M_2 \wedge \partial_{i} \delta M\cdot \partial_{i} \delta M}_{=0}\Big|\\
			&\lesssim
			\| \nabla M_2 \|_{L^4}\| \nabla \delta M \|_{L^2} \| \delta M \|_{L^4}
			\lesssim
			\| \nabla M_2 \|_{L^2}^\frac{1}{2}
			\| \Delta M_2 \|_{L^2}^\frac{1}{2}
			\| \nabla \delta M \|_{L^2}^\frac{3}{2}
			\| \delta M \|_{L^2}^\frac{1}{2}\\
			&\lesssim 
			\| \nabla M_2 \|_{L^2}^2
			\| \Delta M_2 \|_{L^2}^2
			\| \delta M   \|_{L^2}^2 +
			\varepsilon
			\| \nabla \delta M \|_{L^2}^2,
		\end{aligned}
	\end{equation*}
	while, taking into account the Lagrangian multiplier for the unitary constraint of $M$,
	\begin{equation*}
		\begin{aligned}
		\Big|
			\int_{\TT^2}
		 (\nabla \delta M: \nabla M_1)M_1\cdot \delta M 
		\Big|
		 &\leq 
		 \| \nabla \delta M \|_{L^2} \| \nabla M_1 \|_{L^4}  \| \delta M \|_{L^4}
		 \lesssim
		 \| \nabla M_1 \|_{L^2}^\frac{1}{2}\| \Delta M_1 \|_{L^2}^\frac{1}{2}  \| \delta M \|_{L^2}^\frac{1}{2}\| \nabla \delta M \|_{L^2} ^\frac{3}{2}\\
		 &\lesssim
		  \| \nabla M_1 \|_{L^2}^2\| \Delta M_1 \|_{L^2}^2  \| \delta M \|_{L^2}^2 +\ee \| \nabla \delta M \|_{L^2} ^2
		\end{aligned}
	\end{equation*}
	with
	\begin{equation*}
		\begin{aligned}
		\Big|
			\int_{\TT^2}
		 (\nabla  M_2: \nabla \delta M)M_1\cdot \delta M 
		 \Big|
		 \lesssim
		  \| \nabla M_2 \|_{L^2}^2\| \Delta M_2 \|_{L^2}^2  \| \delta M \|_{L^2}^2 +\ee \| \nabla \delta M \|_{L^2} ^2,
		\end{aligned}
	\end{equation*}
	and moreover
	\begin{equation*}
	\begin{aligned}
		  \int_{\TT^2}
		 |\nabla M_2|^2|\delta M |^2
		 &\leq 
		 \| \nabla M_2 \|_{L^4}^2
		 \| \delta M \|_{L^4}^2
		 \lesssim
		 \| \nabla M_2 \|_{L^2}
		 \| \Delta M_2 \|_{L^2}
		 \| \delta M \|_{L^2}
		 \| \nabla \delta M \|_{L^2}\\
		 &\lesssim
		  \| \nabla M_2 \|_{L^2}^2
		 \| \Delta M_2 \|_{L^2}^2
		 \| \delta M \|_{L^2}^2
		 +
		 \ee
		 \| \nabla \delta M \|_{L^2}^2.
	\end{aligned}
	\end{equation*}
	Next, we address the following nonlinear terms:
	\begin{equation*}
		\begin{aligned}
			 \Big| \int_{\TT^2}
		(\delta M\cdot H_{\rm ext})M_1\cdot \delta M 
		 + \int_{\TT^2}
		(M_2\cdot H_{\rm ext})\delta M \cdot \delta M
		\Big|
		\leq 
		\| \delta M \|_{L^4}^2 \| H_{\rm ext} \|_{L^2}
		\lesssim
		\| H_{\rm ext} \|_{L^2}^2 \|\delta M \|_{L^2}^2 + \ee \| \nabla \delta M \|_{L^2}^2.
		\end{aligned}
	\end{equation*}
	Furthermore, recalling that $\psi(M)$ is a $\mathcal{C}^2$ function in $\mathbb{R}^3$, then $\psi'(M)$ is Lipschitz in $\mathbb{R}^3$, i.e.\ there exists a constant $C>0$ such that 
	$|\psi'(M_1) - \psi'(M_2)|\leq C |M_1-M_2|$. Thus
	\begin{equation*}
	    \begin{aligned}
	        \Big|\int_{\TT^2} (M_2\cdot H_{\rm ext})|\delta M |^2\Big|
	        &\lesssim
	        \| H_{\rm ext} \|_{L^2}\| \delta M \|_{L^4}^2
	        \lesssim 
	        \| H_{\rm ext} \|_{L^2}^2 \|\delta M \|_{L^2}^2 + \ee \| \nabla \delta M \|_{L^2}^2,\\
	        \Big|\int_{\TT^2} (M_2\wedge\delta \psi'(M))\cdot \delta M \Big|
	        &\leq 
	        \| \delta \psi'(M) \|_{L^2}\| \delta M \|_{L^2}
	        \lesssim
	        \| \delta M \|_{L^2}^2,\\
	       \Big| \int_{\TT^2}
		    (M_2\wedge \delta M \wedge \psi'(M_1)) \cdot \delta M)\Big|
		    &
		    \lesssim \| \delta M \|_{L^2}^2\\
		   \Big| \int_{\TT^2} (M_2\wedge  M_2 \wedge \delta \psi'(M))  \cdot \delta M)\Big|
		    &\leq 
		    \| \delta \psi'(M) \|_{L^2}\| \delta M \|_{L^2}
		    \lesssim
		    \| \delta M \|_{L^2}^2.
	    \end{aligned}
	\end{equation*}
	We hence couple the previous inequalities together with identity \eqref{deltaM-L2}. We deduce that there exist a constant $C_3>0$ and a function $g_3 \in L^1(T_i,\,
	\widetilde T)$, for any $\widetilde T\in [T_i,T_{i+1})$, such that the following inequality holds true
	\begin{equation}\label{ineq3}
		\frac{1}{2}\|\,\delta M(t)\,\|_{L^2}^2+
		\int_{T_i}^t\|\,\nabla \delta M(s)\,\|_{L^2}^2d s
		\leq 
		\frac{1}{2}\|\,\delta M(T_i)\,\|_{L^2}^2
		+
		\int_{T_i}^t
		g_3(s)
		\delta \EE(s)
		d s
		+C_3\ee \int_{T_i}^t \delta \DD(s) d s,
	\end{equation}
	where the inequality is well defined for any singular time $T_i$ and for any  $t \in (T_i,\,T_{i+1})$.
	
	\noindent It remains to control the $\Hh^{-1/2}(\TT^2)$-norm of $\delta M$ to conclude our estimates. We take the inner product in such a space between $-\Delta \delta M$ and equation \eqref{eq:deltaM}, to gather that
	\begin{align}\label{H12norm_of_delta_M}
		\begin{split}
			\frac 12
			\frac{\dd}{\dd t}
			\| & \nabla \delta M			\|_{\Hh^{-\frac{1}{2}}}^2+
			\|\Delta \delta M\|_{\Hh^{-\frac{1}{2}}}^2
		=
		\langle	   \uu_1 \cdot \nabla \delta M	,		\Delta \delta M		\rangle_{\Hh^{-\frac{1}{2}}}
		+
		\langle		\delta  \uu    \cdot \nabla M_2			,		\Delta \delta M	\rangle_{\Hh^{-\frac{1}{2}}}
		+
		\langle		\delta M \wedge \Delta M_1			,		\Delta \delta M		\rangle_{\Hh^{-\frac{1}{2}}}\\
		&+
		\langle		M_2 \wedge\Delta \delta M			,		\Delta \delta M		\,\rangle_{\Hh^{-\frac{1}{2}}}
		+
		\mu_0\langle		\delta M \wedge H_{\rm ext}		,		\Delta \delta M		\,\rangle_{\Hh^{-\frac{1}{2}}}
		-
		\langle		 (\nabla \delta M: \nabla M_1)M_1	,		\Delta \delta M		\,\rangle_{\Hh^{-\frac{1}{2}}}\\
		&-
		\langle		 (\nabla M_2 :\nabla \delta M)M_1	,		\Delta \delta M		\,\rangle_{\Hh^{-\frac{1}{2}}}
		-
		\langle		|\nabla M_2|^2\delta M		,		\Delta \delta M		\,\rangle_{\Hh^{-\frac{1}{2}}}
		+\mu_0\langle		(\delta M \cdot H_{\rm ext})M_1		,		\Delta \delta M		\,\rangle_{\Hh^{-\frac{1}{2}}}\\
		&+\mu_0\langle		(M_2 \cdot H_{\rm ext})\delta M		,		\Delta \delta M		\,\rangle_{\Hh^{-\frac{1}{2}}}
		- \langle		\delta M\wedge \psi'(M_1)		,		\Delta \delta M		\,\rangle_{\Hh^{-\frac{1}{2}}}
		-\langle		M_2\wedge \delta \psi'(M)		,		\Delta \delta M		\,\rangle_{\Hh^{-\frac{1}{2}}}\\
		&-\langle		\delta M\wedge M_1 \wedge \psi'(M_1)		,		\Delta \delta M		\,\rangle_{\Hh^{-\frac{1}{2}}}
		-\langle		M_2\wedge \delta M \wedge \psi'(M_1)		,		\Delta \delta M		\,\rangle_{\Hh^{-\frac{1}{2}}}
		-\langle		M_2\wedge M_2 \wedge \delta \psi'(M)		,		\Delta \delta M		\,\rangle_{\Hh^{-\frac{1}{2}}}.
		\end{split}
	\end{align}	 
	The first term on the right-hand side is controlled by
	\begin{equation*}
		\langle	   \uu_1 \cdot \nabla \delta M	,		\Delta \delta M		\,\rangle_{\Hh^{-\frac{1}{2}}}
		= 
		\langle	   \overline{\uu}_1 \cdot \nabla \delta M	,		
						\Delta \delta M		\,\rangle_{\Hh^{-\frac{1}{2}}} + 
		\langle	   \uu_1^h \cdot \nabla \delta M	,		
						\Delta \delta M		\,\rangle_{\Hh^{-\frac{1}{2}}},			
	\end{equation*}
	thus
	\begin{align*}
		\big|\langle	   \overline{\uu}_1 \cdot \nabla \delta M	,		
						\Delta \delta M		\,\rangle_{\Hh^{-\frac{1}{2}}}\big|
		\lesssim
		|  \overline{\uu}_1 |\| \nabla \delta M \|_{\Hh^{-\frac{1}{2}}}
		\|\Delta \delta M	 \|_{\Hh^{-\frac{1}{2}}}
		\lesssim 
		\| \uu_1 \|_{L^2}^2 \| \nabla \delta M \|_{\Hh^{-\frac{1}{2}}}^2 +
		\varepsilon 
		\| \Delta \delta M \|_{\Hh^{-\frac{1}{2}}}^2
	\end{align*}
	and
	\begin{align*}
			\big|\langle	   \uu_1^h \cdot \nabla \delta M	,		\Delta \delta M		\,\rangle_{\Hh^{-\frac{1}{2}}}\big|
			\leq&
			\| \uu_1^h\cdot \nabla \delta M \|_{\Hh^{-\frac{1}{2}}}
			\| \Delta \delta M \|_{\Hh^{-\frac{1}{2}}}
			\lesssim
			\| \uu_1^h \|_{\Hh^\frac{3}{4}} \| \nabla \delta M \|_{\Hh^{-\frac{1}{4}}}
			\| \Delta \delta M \|_{\Hh^{-\frac{1}{2}}} \\
			\lesssim&
			\| \uu_1^h \|_{L^2}^\frac{1}{4} \| \nabla \uu_1 \|_{L^2}^\frac{3}{4} \| \nabla \delta M \|_{\Hh^{-\frac{1}{2}}}^\frac{3}{4}
			\| \Delta \delta M \|_{\Hh^{-\frac{1}{2}}}^\frac{5}{4}
			\lesssim
			\| \uu_1 \|_{L^2}^\frac{2}{3} \| \nabla \uu_1 \|_{L^2}^2
			 \| \nabla \delta M \|_{\Hh^{-\frac{1}{2}}}^2+
			 \ee
			\| \Delta \delta M \|_{\Hh^{-\frac{1}{2}}}^2.
	\end{align*}
	The second term is similarly handled through
	$ \langle		\delta  \uu    \cdot \nabla M_2			,		\Delta \delta M		\,\rangle_{\Hh^{-\frac{1}{2}}} = \langle	\overline{\delta  \uu}    \cdot \nabla M_2			,		\Delta \delta M		\,\rangle_{\Hh^{-\frac{1}{2}}} + 
	\langle		\delta  \uu^h    \cdot \nabla M_2			,		\Delta \delta M		\,\rangle_{\Hh^{-\frac{1}{2}}}$ and
	\begin{equation*}
		\begin{aligned}	
		\big|\langle	\overline{\delta  \uu}    \cdot \nabla M_2			
		    ,		\Delta \delta M		\,\rangle_{\Hh^{-\frac{1}{2}}} \big|
		&\leq
		| \overline{\delta \uu}| \| \nabla M_2 \|_{\Hh^{-\frac{1}{2}}} 
		 \| \Delta \delta M \|_{\Hh^{-\frac{1}{2}}}
		 \lesssim
		 | \overline{\delta \uu}| \| M_2 \|_{L^2}^\frac{1}{2}\| \nabla M_2 \|_{L^2}^\frac{1}{2} 
		 \| \Delta \delta M \|_{\Hh^{-\frac{1}{2}}}
		 \\
		  &\lesssim
		\| \nabla M_2 \|_{L^2}
		 | \overline{\delta \uu}|^2+
		 \varepsilon
		 \| \Delta \delta M \|_{\Hh^{-\frac{1}{2}}}^2
		\\
				\big|\langle		\delta  \uu^h    \cdot \nabla M_2			,		\Delta \delta M		\,\rangle_{\Hh^{-\frac{1}{2}}}\big|
		&\lesssim
		\| 		\delta  \uu^h \|_{\Hh^{-\frac{1}{4}}}
		\| \nabla M_2 \|_{\Hh^\frac{3}{4}}
		\| \Delta \delta M \|_{\Hh^{-\frac{1}{2}}}
		\lesssim
			\| 		\delta  \uu^h \|_{\Hh^{-\frac{1}{2}}}^\frac{3}{4}
			\| \nabla	\delta  \uu^h \|_{\Hh^{-\frac{1}{2}}}^\frac{1}{4}
			\| \nabla M_2 \|_{L^2}^\frac{1}{4}
			\| \Delta M_2 \|_{L^2}^\frac{3}{4}
			\| \Delta \delta M \|_{\Hh^{-\frac{1}{2}}}
		\\
		&
		\lesssim
			\| \nabla M_2 \|_{L^2}^\frac{2}{3} \| \Delta M_2 \|_{L^2}^2
			 \| \delta \uu^h \|_{\Hh^{-\frac{1}{2}}}^2+
			 \ee
			\| \Delta \delta M \|_{\Hh^{-\frac{1}{2}}}^2
			+\ee \|\nabla \delta \uu \|_{\Hh^{-\frac{1}{2}}}^2.
		\end{aligned}
	\end{equation*}
	Coming back to \eqref{H12norm_of_delta_M}, we now address $ \langle		M_2 \wedge \Delta \delta M			,		\Delta \delta M		\,\rangle_{\Hh^{-\frac{1}{2}}}$. We first split the magnetization into its homogeneous and non-homogeneous components $ M_2 = \overline{M}_2 + M_2^h$. Then 
	\begin{equation*}
			\langle		M_2 \wedge \Delta \delta M			,		\Delta \delta M		\,\rangle_{\Hh^{-\frac{1}{2}}}
			=
			\underbrace{\langle		\overline M_2 \wedge \Delta \delta M			,		\Delta \delta M		\,\rangle_{\Hh^{-\frac{1}{2}}}}_{=0}+
			\langle		M_2^h \wedge \Delta \delta M			,		\Delta \delta M		\,\rangle_{\Hh^{-\frac{1}{2}}}.
	\end{equation*}
	We hence state the following inequality, whose proof is postponed to Section~\ref{sec5} (see Proposition~\ref{prop:blabla}):
	\begin{equation}\label{ineq:toprove1}
		\big|\langle		M_2^h \wedge \Delta \delta M			,		\Delta \delta M		\,\rangle_{\Hh^{-\frac{1}{2}}}\big|
		\lesssim
    	(1+\|\nabla M_2 \|_{L^2}^2)\|  \Delta M_2 \|_{L^2}^2 \|\nabla \delta M\|_{\Hh^{-\frac{1}{2}}}^2
		+ \ee\| \Delta \delta M \|_{\Hh^{-\frac{1}{2}}}^2.
	\end{equation}
	Furthermore, we obtain that
	\begin{equation*}
		\begin{aligned}
			\big|\langle		\delta M \wedge H_{\rm ext}		,		\Delta \delta M		\,\rangle_{\Hh^{-\frac{1}{2}}}\big|
		&\leq
		\| \delta M \wedge H_{\rm ext} \|_{L^2} 
		\|  \Delta \delta M \|_{\Hh^{-1}} 
		\lesssim
		\| \delta M \|_{L^4} 
		\| H_{\rm \ext} \|_{L^4}
		\| \nabla \delta M \|_{L^2}\\
		&\lesssim 
		\| \delta M \|_{L^2}^\frac{1}{2}
		\| H_{\rm \ext} \|_{L^2}^\frac{1}{2}
		\| \nabla H_{\rm \ext} \|_{L^2}^\frac{1}{2}
		\| \nabla \delta M \|_{L^2}^\frac{3}{2}
		\lesssim 
		\| H_{\rm \ext} \|_{L^2}^2
		\| \nabla H_{\rm \ext} \|_{L^2}^2
		\| \delta M \|_{L^2}^2
		+\varepsilon
		\| \nabla \delta M \|_{L^2}^2.
		\end{aligned}
	\end{equation*}
	
	\noindent We now  focus on the term $\langle		 (\nabla \delta M: \nabla M_1)M_1	,		\Delta \delta M		\,\rangle_{\Hh^{-\frac{1}{2}}}$ and we claim that a similar inequality holds also for 
	$\langle		 (\nabla  M_2: \nabla \delta M)M_1	,		\Delta \delta M		\,\rangle_{\Hh^{-\frac{1}{2}}}$. We first separate the homogeneous component of $M_1$ in the inner product by:
	\begin{equation*}
		\langle		 (\nabla \delta M: \nabla M_1)M_1	,		\Delta \delta M		\,\rangle_{\Hh^{-\frac{1}{2}}} = 
		\langle		 (\nabla \delta M: \nabla M_1)\overline{M}_1	,		\Delta \delta M		\,\rangle_{\Hh^{-\frac{1}{2}}}
		+
		\langle		 (\nabla \delta M: \nabla M_1)M_1^h	,		\Delta \delta M		\,\rangle_{\Hh^{-\frac{1}{2}}}.
	\end{equation*}
	We hence remark that
	\begin{equation*}
	\begin{aligned}
		\big|\langle		 (\nabla \delta M: \nabla M_1)\overline{M}_1	,		\Delta \delta M		\,\rangle_{\Hh^{-\frac{1}{2}}}\big|
		&\leq 
		| \overline{M}_1 | \| \nabla \delta M: \nabla M_1 \|_{\Hh^{-\frac{1}{2}}} \| \Delta \delta M \|_{\Hh^{-\frac{1}{2}}}\\
		&\lesssim
	 \|\nabla \delta M \|_{\Hh^{-\frac{1}{4}}}\|\nabla  M_1\|_{\Hh^{\frac{3}{4}}}\| \Delta \delta M \|_{\Hh^{-\frac{1}{2}}}\\
		&\lesssim
		 \|\nabla  M_1\|_{L^2}^\frac{1}{4}\|\Delta  M_1\|_{L^2}^\frac{3}{4}\|\nabla \delta M \|_{\Hh^{-\frac{1}{2}}}^\frac{3}{4}\| \Delta \delta M \|_{\Hh^{-\frac{1}{2}}}^\frac{5}{4}\\
		&\lesssim
		 \|\nabla  M_1\|_{L^2}^\frac{2}{3}\|\Delta  M_1\|_{L^2}^2\| \nabla \delta M \|_{\Hh^{-\frac{1}{2}}}^2 +\ee\| \Delta \delta M \|_{\Hh^{-\frac{1}{2}}}^2.
	\end{aligned}	
	\end{equation*}
	Furthermore,
	\begin{equation*}
	\begin{aligned}
		\big|\langle		 (\nabla \delta M: \nabla M_1)M_1^h	,		\Delta \delta M		\,\rangle_{\Hh^{-\frac{1}{2}}}\big|
		&\leq 
		 \| (\nabla \delta M: \nabla M_1)M_1^h \|_{\Hh^{-\frac{1}{2}}} \| \Delta \delta M \|_{\Hh^{-\frac{1}{2}}}\\
		&\lesssim
		 \|\nabla \delta M \|_{\Hh^{-\frac{1}{4}}}\|\nabla  M_1\otimes M_1^h\|_{\Hh^{\frac{3}{4}}}\| \Delta \delta M \|_{\Hh^{-\frac{1}{2}}}\\
		&\lesssim
		 \|\nabla  M_1\otimes M_1^h\|_{L^2}^\frac{1}{4}\|\nabla (\nabla  M_1\otimes M_1^h)\|_{L^2}^\frac{3}{4}\|\nabla \delta M \|_{\Hh^{-\frac{1}{2}}}^\frac{3}{4}\| \Delta \delta M \|_{\Hh^{-\frac{1}{2}}}^\frac{5}{4}\\
		&\lesssim
		 \|\nabla  M_1\|_{L^2}^\frac{1}{4}\| M_1^h \|_{L^\infty}^\frac{1}{4}\Big( \|\Delta  M_1 \|_{L^2} \| M_1^h\|_{L^\infty} + \| \nabla M_1 \|_{L^4}^2\Big)^\frac{3}{4}\|\nabla \delta M \|_{\Hh^{-\frac{1}{2}}}^\frac{3}{4}\| \Delta \delta M \|_{\Hh^{-\frac{1}{2}}}^\frac{5}{4}\\
		 &\lesssim
		 \|\nabla  M_1\|_{L^2}^\frac{1}{4}\Big( \|\Delta  M_1 \|_{L^2}  + \| \nabla M_1 \|_{L^2}\| \Delta M_1 \|_{L^2}\Big)^\frac{3}{4}\|\nabla \delta M \|_{\Hh^{-\frac{1}{2}}}^\frac{3}{4}\| \Delta \delta M \|_{\Hh^{-\frac{1}{2}}}^\frac{5}{4}\\
		 &\lesssim
		 \|\nabla  M_1\|_{L^2}^\frac{2}{3}\Big(1  + \| \nabla M_1 \|_{L^2}\Big)^2  \|\Delta  M_1 \|_{L^2}^2\|\nabla \delta M \|_{\Hh^{-\frac{1}{2}}}^2 +\ee \| \Delta \delta M \|_{\Hh^{-\frac{1}{2}}}^2.
	\end{aligned}	
	\end{equation*}
	
	\noindent 
	It hence remains to bound the last terms of identity \eqref{H12norm_of_delta_M}. First, we remark that
	\begin{equation*}
	\begin{aligned}
			\big|\langle		|\nabla M_2|^2\delta M		,		\Delta \delta M		\,\rangle_{\Hh^{-\frac{1}{2}}}\big|
			&=
			\big|\langle		|\nabla M_2|^2\overline{\delta M}		,		\Delta \delta M		\,\rangle_{\Hh^{-\frac{1}{2}}}
			+
			\langle		|\nabla M_2|^2\delta M^h		,		\Delta \delta M		\,\rangle_{\Hh^{-\frac{1}{2}}}\big|\\
			&\leq 
			| \overline{\delta M}|
			\| |\nabla M_2|^2 \|_{L^2}\| \Delta \delta  M \|_{\Hh^{-1}} +
			\| |\nabla M_2|^2 \|_{L^2}\| \delta M^h \|_{\Hh^{\frac{1}{2}}}\| \Delta \delta M \|_{\Hh^{-\frac{1}{2}}}\\
			&\lesssim 
			\| \delta M \|_{L^2}
			\| \nabla M_2 \|_{L^4}^2\| \nabla \delta  M \|_{L^2} +
			\| \nabla M_2 \|_{L^4}^2\|\nabla \delta M^h \|_{\Hh^{-\frac{1}{2}}}\| \Delta \delta M \|_{\Hh^{-\frac{1}{2}}}\\
			&\lesssim 
			\| \nabla M_2 \|_{L^2}^2
			\| \Delta M_2 \|_{L^2}^2
			\Big(
			\| \delta M \|_{L^2}^2
			+
			\|\nabla \delta M^h \|_{\Hh^{-\frac{1}{2}}}^2
			\Big)
			+
			\ee\| \nabla \delta  M \|_{L^2}^2 +
			\ee\| \Delta \delta M \|_{\Hh^{-\frac{1}{2}}}^2.
	\end{aligned}
	\end{equation*}
	Furthermore, we remark that
	\begin{equation*}
		\begin{aligned}
			\big|\langle		(\delta M  \cdot H_{\rm ext})M_1		,		\Delta \delta M		\,\rangle_{\Hh^{-\frac{1}{2}}}\big|
			&\lesssim
		    \|	(\delta M  \cdot H_{\rm ext})M_1 \|_{L^2} \| \Delta\delta M \|_{\Hh^{-1}}\\
			&\lesssim
			\|  \delta M \|_{L^4} \| H_{\rm ext} \|_{L^4} \| \nabla\delta M \|_{L^2}\\
			&\lesssim
			\|       H_{\rm ext}  \|_{L^2}^\frac{1}{2}
			\|\nabla H_{\rm ext}  \|_{L^2}^\frac{1}{2}
			\|        \delta M    \|_{L^2}^\frac{1}{2} 
			\| \nabla \delta M    \|_{L^2}^\frac{3}{2}\\
			&\lesssim
			\|       H_{\rm ext}  \|_{L^2}^2
			\|\nabla H_{\rm ext}  \|_{L^2}^2
			\|        \delta M    \|_{L^2}^2
			+\varepsilon
			\| \nabla \delta M    \|_{L^2}^2.
		\end{aligned}
	\end{equation*}
	Similarly,
	\begin{equation*}
			\big|\langle		( M_2 \cdot H_{\rm ext})\delta M		,		\Delta \delta M		\,\rangle_{\Hh^{-\frac{1}{2}}}\big|
			\lesssim
			\|       H_{\rm ext}  \|_{L^2}^2
			\|\nabla H_{\rm ext}  \|_{L^2}^2
			\|        \delta M    \|_{L^2}^2
			+\varepsilon
			\| \nabla \delta M    \|_{L^2}^2.
	\end{equation*}
	Moreover,	
	\begin{equation*}
		\begin{aligned}
			\big|\langle		\delta M\wedge \psi'(M_1)		,		\Delta \delta M		\,\rangle_{\Hh^{-\frac{1}{2}}}\big|
			&\leq 
			\| \delta M \wedge \psi'(M_1) \|_{L^2     }
			\| \Delta \delta M            \|_{\Hh^{-1}}\\
			&\lesssim 
			\| \delta M \|_{L^2}\| \nabla \delta M \|_{L^2}
			\lesssim
			\| \delta M \|_{L^2}^2
			+\varepsilon
			\| \nabla \delta M \|_{L^2}^2.
		\end{aligned}	
	\end{equation*}
	With similar procedures, one can easily control the remaining terms of \eqref{H12norm_of_delta_M}, i.e.
	\begin{align*}
	    \big|\langle		\delta M\wedge M_1 \wedge \psi'(M_1)	
		+		M_2\wedge \delta M \wedge \psi'(M_1)	+		M_2\wedge M_2 \wedge \delta \psi'(M)		,		\Delta \delta M		\,\rangle_{\Hh^{-\frac{1}{2}}}
		\big|
		\lesssim
		\| \delta M \|_{L^2}^2 + \varepsilon \|\nabla \delta M \|_{L^2}^2.
	\end{align*}
	Summarizing the previous inequalities and recalling the identity \eqref{H12norm_of_delta_M}, we hence deduce that there exist a constant $C_4>0$ and a function $g_4 \in L^1(T_i,\,\widetilde T)$, for any $i =1, \dots N$ and $\widetilde T<T_{i+1}$, such that the following inequality holds true
	\begin{equation}\label{ineq4}
		\frac 12
		\| \nabla \delta M(t)			\|_{\Hh^{-\frac{1}{2}}}^2+
		\int_{T_i}^t
		\|\Delta \delta M(s)\|_{\Hh^{-\frac{1}{2}}}^2\dd s
		\leq 
		\int_{T_i}^t
		g_4(s)
		\delta \EE(s)
		\dd s
		+C_4\ee 
		\int_{T_i}^t\delta \DD(s)\dd s,
	\end{equation}
	where the inequality is well defined for any $i \in \NN$ and for any $t \in (T_i,\,T_{i+1})$.
	
	\noindent The proof of Proposition~\ref{prop:estEE} is then accomplished by coupling the inequalities \eqref{ineq1}, \eqref{ineq2}, \eqref{ineq3} and \eqref{ineq4}, and assuming the parameter $\ee$ small enough to absorb the dissipative term on the right-hand side to the one of the left-hand side.
	\end{proof}

\section{Technical estimates} \label{sec5}
\noindent We are now in the position to prove inequality \eqref{ineq:toprove1}, which we recall in the following proposition.
\begin{prop}\label{prop:blabla}
	Let $M_1,M_2, \delta M$ and $M_2^h$ be as in Section~\ref{sec:uniqueness}. Then for any $\ee>0$, one has the following inequality
		\begin{equation*}
		\begin{aligned}
			\big|\langle		M_2^h \wedge \Delta \delta M			,		\Delta \delta M		\,\rangle_{\Hh^{-\frac{1}{2}}}\big|
		\lesssim
		(1+\|\nabla M_2 \|_{L^2}^2)\|  \Delta M_2 \|_{L^2}^2 \|\nabla \delta M\|_{\Hh^{-\frac{1}{2}}}^2
		+ \ee\| \Delta \delta M \|_{\Hh^{-\frac{1}{2}}}^2.
		\end{aligned}
		\end{equation*}
	\end{prop}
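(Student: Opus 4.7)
My plan is to exploit the pointwise orthogonality $(M_2^h \wedge v)\cdot v = 0$ at each dyadic scale, which reduces the estimate to a commutator bound, and then combine a Bony-type decomposition with an interpolation between homogeneous Sobolev norms.

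By the definition \eqref{inner_product} of the $\Hh^{-\frac{1}{2}}$ inner product, one has
$$ I := \langle M_2^h \wedge \Delta \delta M, \Delta \delta M\rangle_{\Hh^{-\frac{1}{2}}} = \sum_{q\in \ZZ} 2^{-q} \int_{\TT^2}\Dd_q(M_2^h \wedge \Delta \delta M)\cdot \Dd_q \Delta \delta M\, \dd x.$$
Decomposing $\Dd_q(M_2^h \wedge \Delta \delta M) = M_2^h \wedge \Dd_q \Delta \delta M + [\Dd_q, M_2^h\wedge]\Delta \delta M$ and observing that $(M_2^h \wedge \Dd_q \Delta \delta M)\cdot \Dd_q \Delta \delta M$ vanishes pointwise by antisymmetry of the cross product, the main contribution cancels and we are left with the commutator representation
$$ I = \sum_{q\in\ZZ} 2^{-q}\int_{\TT^2}[\Dd_q, M_2^h\wedge]\Delta\delta M \cdot \Dd_q\Delta\delta M\, \dd x.$$

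To estimate this commutator, I apply Bony's paraproduct decomposition \eqref{BONY} to $M_2^h \wedge \Delta\delta M$; the dominant low-high paraproduct contributes terms of the form $[\Dd_q, \Sd_{j-1}M_2^h\wedge]\Dd_j\Delta\delta M$ with $|j-q|\leq 5$, which I bound via Lemma~\ref{l:commut} with $p=2,\,r=h=4$ followed by the 2D Bernstein inequality:
$$\|[\Dd_q, \Sd_{j-1}M_2^h\wedge]\Dd_j\Delta\delta M\|_{L^2} \lesssim 2^{-q}\|\nabla M_2\|_{L^4}\|\Dd_j\Delta\delta M\|_{L^4} \lesssim 2^{-q/2}\|\nabla M_2\|_{L^4}\|\Dd_j\Delta\delta M\|_{L^2}.$$
The difference term $(\Sd_{j-1}-\Sd_{q-1})M_2^h\wedge \Dd_q\Dd_j\Delta\delta M$ and the high-high remainder $\Dd_q(\Sd_{j+2}\Delta\delta M\wedge \Dd_j M_2^h)$ for $j\geq q-5$ admit analogous bounds; for the latter we additionally invoke $\|\Dd_j M_2^h\|_{L^\infty}\lesssim 2^{-j}\|\Dd_j\Delta M_2\|_{L^2}$, which follows from Bernstein together with $|M_2|\equiv 1$, providing sufficient decay in $j$ to close the argument via Cauchy--Schwarz in the dyadic indices.

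Assembling the estimates yields
$$|I| \lesssim \|\nabla M_2\|_{L^4}\sum_{q\in\ZZ} 2^{-3q/2}\|\Dd_q \Delta\delta M\|_{L^2}^2 \sim \|\nabla M_2\|_{L^4}\|\delta M\|_{\Hh^{5/4}}^2,$$
using $\|\Dd_q\Delta\delta M\|_{L^2}\sim 2^{2q}\|\Dd_q\delta M\|_{L^2}$ and the Littlewood--Paley characterization \eqref{eq:LP-Sob}. The Sobolev interpolation $\|\delta M\|_{\Hh^{5/4}}^2\lesssim \|\delta M\|_{\Hh^{1/2}}^{1/2}\|\delta M\|_{\Hh^{3/2}}^{3/2}$, combined with $\|\delta M\|_{\Hh^{1/2}}\sim\|\nabla \delta M\|_{\Hh^{-\frac{1}{2}}}$ and $\|\delta M\|_{\Hh^{3/2}}\sim\|\Delta\delta M\|_{\Hh^{-\frac{1}{2}}}$, and Young's inequality with exponents $4$ and $4/3$, produces
$$|I| \lesssim \ee\,\|\Delta\delta M\|_{\Hh^{-\frac{1}{2}}}^2 + C_\ee\|\nabla M_2\|_{L^4}^4\|\nabla\delta M\|_{\Hh^{-\frac{1}{2}}}^2.$$
The 2D Ladyzhenskaya inequality $\|\nabla M_2\|_{L^4}^4\lesssim \|\nabla M_2\|_{L^2}^2\|\Delta M_2\|_{L^2}^2$ then yields the announced bound. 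The main obstacle is the careful bookkeeping of the Bony decomposition so that the antisymmetric cancellation is extracted at precisely the frequency scale where it annihilates the dangerous ``diagonal'' contribution; all other terms are of lower order and admit the same (or better) estimates.
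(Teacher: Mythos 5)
Your proof is correct and follows essentially the same route as the paper: both exploit the pointwise antisymmetry $(M_2^h\wedge v)\cdot v=0$ to kill the diagonal paraproduct term, reduce to the commutator, the $(\Sd_{j-1}-\Sd_{q-1})$ difference and the high--high remainder via Bony's decomposition \eqref{BONY}, and close with Lemma~\ref{l:commut}, Bernstein, the Ladyzhenskaya bound $\|\nabla M_2\|_{L^4}^2\lesssim\|\nabla M_2\|_{L^2}\|\Delta M_2\|_{L^2}$ and Young's inequality. The only differences are cosmetic (extracting the commutator before rather than after applying Bony, and a different but equivalent choice of H\"older exponents in the commutator estimate), so no further comparison is needed.
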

\begin{proof}
	We first recast the inner product with the formula given by \eqref{inner_product}, hence we apply the Bony decomposition \eqref{BONY}:
	\begin{align*}
		\big|\langle		M_2^h \wedge \Delta \delta M			,		\Delta \delta M		\,\rangle_{\Hh^{-\frac{1}{2}}}\big|
		\leq &
		\bigg|\sum_{q\in\ZZ}
		\sum_{|j-q|\leq 5}
		2^{-q}\int_{\TT^2} [\Dd_q,\Sd_{j-1} M_2^h]\wedge\Dd_j \Delta  \delta M \cdot \Dd_q \Delta \delta M
		\bigg|\\
		&+\bigg|
		\sum_{q\in\ZZ}
		\sum_{|j-q|\leq 5}
		2^{-q}\int_{\TT^2} (\Sd_{j-1} M_2^h-\Sd_{q-1} M_2^h)\wedge\Dd_q\Dd_j \Delta  \delta M \cdot \Dd_q \Delta \delta M
		\bigg|\\
		&+\bigg|
		\sum_{q\in\ZZ}
		\sum_{j\geq q-5}2^{-q}\int_{\TT^2}\Dd_q (\Dd_j M_2^h\wedge \Sd_{j+2}\Delta  \delta M) \cdot \Dd_q \Delta \delta M\bigg| =: \I + \I\I + \I\I\I,
	\end{align*}
	where we have also used the fact that
	\begin{equation*}
		\sum_{q\in\ZZ} 2^{-q}\int_{\TT^2} (\Sd_{q-1}M_2^h\wedge \Dd_q \Delta  \delta M) \cdot \Dd_q \Delta \delta M = 0.
	\end{equation*}
	We begin with the estimate of $\I$ and we remark that by Lemma~\ref{l:commut}
	\begin{equation*}
	\begin{aligned}
		\big|\sum_{q\in\ZZ}
		&\sum_{|j-q|\leq 5}
		2^{-q}\int_{\TT^2} [\Dd_q,\Sd_{j-1} M_2^h]\wedge\Dd_j \Delta  \delta M \cdot \Dd_q \Delta \delta M
		\big|\\
		&\lesssim
		\sum_{q\in\ZZ}
		\sum_{|j-q|\leq 5}
		2^{-q}
		\| \Sd_{j-1} \nabla M_2 \|_{L^4} \| \Dd_j \Delta \delta M \|_{L^2}
		2^{-q}\| \Dd_q \Delta \delta M \|_{L^4}\\
		&\lesssim
		\sum_{q\in\ZZ}
		\sum_{|j-q|\leq 5}
		2^{-q}
		\| \nabla M_2 \|_{L^4} \| \Dd_j \Delta \delta M \|_{L^2}
		\| \Dd_q \nabla \delta M \|_{L^2}^\frac{1}{2}
		\| \Dd_q \Delta \delta M \|_{L^2}^\frac{1}{2}\\
		&\lesssim
		\| \nabla M_2 \|_{L^2}^\frac{1}{2}
		\| \Delta M_2 \|_{L^2}^\frac{1}{2}
		\| \nabla \delta M \|_{\Hh^{-\frac{1}{2}}}^\frac{1}{2}
		\| \Delta \delta M \|_{\Hh^{-\frac{1}{2}}}^\frac{3}{2}.
	\end{aligned}
	\end{equation*}
	The above estimate holds true, since $\| \Dd_q \Delta \delta M \|_{L^4} \lesssim 
	2^q\| \Dd_q \nabla \delta M \|_{L^4}\lesssim 
	2^q\| \Dd_q \nabla \delta M \|_{L^2}^\frac{1}{2}\| \Dd_q \Delta \delta M \|_{L^2}^\frac{1}{2}$, thanks to Lemma~\ref{l:bern}. Hence, the following estimate holds true
	\begin{equation*}
		\I \lesssim  \| \nabla M_2 \|_{L^2}^2
		\| \Delta M_2 \|_{L^2}^2
		\| \nabla \delta M \|_{\Hh^{-\frac{1}{2}}}^2
		+\ee
		\| \Delta \delta M \|_{\Hh^{-\frac{1}{2}}}^2.
	\end{equation*}
	With a similar procedure, one can show that the above inequality is true also when replacing $\I$ by $\I \I$. Hence, it just remains to estimate $\I\I\I$. Using Lemma~\ref{l:bern}, we obtain that
	\begin{equation*}
	\begin{aligned}
		\I\I\I
		&\lesssim 
		\sum_{q\in\ZZ}
			\sum_{j\geq q-5}2^{-q}\|\Dd_q (\Dd_j M_2^h\wedge \Sd_{j+2}\Delta  \delta M)\|_{L^2} \|\Dd_q \Delta \delta M \|_{L^2} \\
			&\lesssim
			\sum_{q\in\ZZ}
			\sum_{j\geq q-5}2^{-q}2^q\|\Dd_q (\Dd_j M_2^h\wedge\Sd_{j+2}\Delta  \delta M)\|_{L^1} \|\Dd_q \Delta \delta M \|_{L^2} 
			\\
			&\lesssim
			\sum_{q\in\ZZ}
			\sum_{j\geq q-5}\|\Dd_j M_2^h \|_{L^2} \|\Sd_{j+2}\Delta  \delta M\|_{L^2} \|\Dd_q \Delta \delta M \|_{L^2} 
			\\
			&\lesssim
			\sum_{q\in\ZZ}
			\sum_{j\geq q-5}2^{-2j}\|\Dd_j \Delta M_2^h \|_{L^2}2^{j+2} \|\Sd_{j+2}\nabla  \delta M\|_{L^2} \|\Dd_q \Delta \delta M \|_{L^2} 
			\\
			&\lesssim
			\sum_{q\in\ZZ}
			\sum_{j\in\ZZ}{\bf 1}_{(-\infty, 5]}(q-j)2^{\frac{q-j}{2}}\|\Dd_j \Delta M_2^h \|_{L^2} 2^{-\frac{j+2}{2}}\|\Sd_{j+2}\nabla  \delta M\|_{L^2} 2^{-\frac{q}{2}}\|\Dd_q \Delta \delta M \|_{L^2}\\
			&\lesssim
			\|\Delta M_2 \|_{L^2}
			\left(
			\sum_{q\in\ZZ}
			\Big|
			\sum_{j\in\ZZ}{\bf 1}_{(-\infty, 5]}(q-j)2^{\frac{q-j}{2}} 2^{-\frac{j+2}{2}}\|\Sd_{j+2}\nabla  \delta M\|_{L^2}\Big|^2
			\right)^\frac{1}{2}
			\left(
			\sum_{q\in\ZZ}2^{-q}\|\Dd_q \Delta \delta M \|_{L^2}^2
			\right)^\frac{1}{2}.
	\end{aligned}
	\end{equation*}
	Applying the Young inequality to the convolution in $(q,j)\in \mathbb{Z}^2$ and combining the result with Proposition~\ref{prop:s_negative}, we finally deduce that
	\begin{equation*}
			\I\I\I 
			\lesssim
			\|\Delta M_2 \|_{L^2}
			\| \nabla \delta M \|_{\Hh^{-\frac{1}{2}}}
			\|  \Delta \delta M \|_{\Hh^{-\frac{1}{2}}}
			\lesssim
			\|\Delta M_2 \|_{L^2}^2
			\| \nabla \delta M \|_{\Hh^{-\frac{1}{2}}}^2
			+\ee
			\|  \Delta \delta M \|_{\Hh^{-\frac{1}{2}}}^2
	\end{equation*}
	and this concludes the proof of the proposition.
\end{proof}
\noindent Next, we address the proof of Lemma~\ref{comm-est}, 
which we recall in the following statement.
	\begin{lemma}\label{lemma6.2} There exists a $C>0$ such that for any divergence-free vector field $\rm v$ in $\Hh^{1}(\TT^2)$ and any vector field $B$ in $\Hh^{\frac{1}{2}}(\TT^2)$, it holds
		\begin{equation*}
			\left|\langle\,  {\rm v}\cdot \nabla B,\, B\,\rangle_{\Hh^{-\frac{1}{2}}(\TT^2)}\right|\,\leq\, C \|\,\nabla {\rm v}\,\|_{L^2}\|\,\nabla B\,\|_{\Hh^{-\frac{1}{2}}}\|\,B\,\|_{\Hh^{-\frac{1}{2}}}.
		\end{equation*}
	\end{lemma}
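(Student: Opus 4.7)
The plan is to expand $\langle \mathrm{v}\cdot\nabla B, B\rangle_{\Hh^{-1/2}}$ via the dyadic formula \eqref{inner_product} and then apply the Bony decomposition \eqref{BONY} to every block $\Dd_q(\mathrm{v}\cdot\nabla B)$. Four contributions appear: (i) a commutator term $\sum_{|q-j|\leq 5}[\Dd_q,\Sd_{j-1}\mathrm{v}]\cdot \Dd_j\nabla B$; (ii) a low-frequency difference $\sum_{|q-j|\leq 5}(\Sd_{j-1}\mathrm{v}-\Sd_{q-1}\mathrm{v})\Dd_q\Dd_j\nabla B$; (iii) the diagonal paraproduct $\Sd_{q-1}\mathrm{v}\cdot \Dd_q\nabla B$; and (iv) the remainder $\sum_{j\geq q-5}\Dd_q(\Sd_{j+2}\nabla B\cdot \Dd_j \mathrm{v})$.

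The key observation is that (iii) vanishes identically, thanks to the divergence-free condition:
\[
\int_{\TT^2}\Sd_{q-1}\mathrm{v}\cdot \Dd_q\nabla B\cdot \Dd_q B \,=\, \tfrac12\int_{\TT^2}\Sd_{q-1}\mathrm{v}\cdot \nabla|\Dd_q B|^2 \,=\, 0,
\]
because $\mathrm{div}\,\Sd_{q-1}\mathrm{v}=0$. This cancellation is indispensable, since the naive product law of Lemma~\ref{lemma_product_homogeneous_sobolev_spaces} is not available at the pair $(s_1,s_2)=(1,-1/2)$ in dimension $d=2$: the index $s_1=1$ sits exactly at the critical threshold $d/2$.

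For (i) I apply Lemma~\ref{l:commut} with $p=1$, $r=h=2$ to get $\|[\Dd_q,\Sd_{j-1}\mathrm{v}]\cdot \Dd_j\nabla B\|_{L^1}\lesssim 2^{-q}\|\nabla \mathrm{v}\|_{L^2}\|\Dd_j\nabla B\|_{L^2}$, and I combine this with Bernstein's inequality $\|\Dd_q B\|_{L^\infty}\lesssim 2^q\|\Dd_q B\|_{L^2}$ from Lemma~\ref{l:bern} to turn the duality pairing into an $L^2\times L^2$ product. For (ii) I use that $\Sd_{j-1}\mathrm{v}-\Sd_{q-1}\mathrm{v}$ is a sum of $O(1)$ blocks $\Dd_k\mathrm{v}$ with $k\sim q$, so the same Bernstein bound yields $\|(\Sd_{j-1}-\Sd_{q-1})\mathrm{v}\|_{L^\infty}\lesssim \|\nabla \mathrm{v}\|_{L^2}$. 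In both cases the constraint $|q-j|\leq 5$ confines the double summation to a thin diagonal, and a finite Cauchy--Schwarz in the $\ell^2$ sequences $2^{-q/2}\|\Dd_q B\|_{L^2}$ and $2^{-j/2}\|\Dd_j\nabla B\|_{L^2}$ delivers the target bound $\|\nabla \mathrm{v}\|_{L^2}\|\nabla B\|_{\Hh^{-1/2}}\|B\|_{\Hh^{-1/2}}$.

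The remainder (iv) is the only genuinely off-diagonal piece and is the technical heart of the argument. Going from $L^1$ to $L^2$ by Bernstein at frequency $2^q$, together with $\|\Dd_j\mathrm{v}\|_{L^2}\leq 2^{-j}\|\Dd_j\nabla\mathrm{v}\|_{L^2}$, one obtains
\[
\|\Dd_q(\Sd_{j+2}\nabla B\cdot \Dd_j\mathrm{v})\|_{L^2}\,\lesssim\, 2^q\,2^{-j}\,\|\Sd_{j+2}\nabla B\|_{L^2}\,\|\Dd_j\nabla\mathrm{v}\|_{L^2}.
\]
Once this is weighted by $2^{-q}$ from \eqref{inner_product} and paired with $\|\Dd_q B\|_{L^2}$, the $(q,j)$-sum collapses to a discrete convolution with kernel $2^{(q-j)/2}\mathbf{1}_{j\geq q-5}$, which lies in $\ell^1(\ZZ)$. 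Young's inequality together with Cauchy--Schwarz over the $\ell^2$ sequences encoding the $\Hh^{-1/2}$ norms of $B$ and $\nabla B$ (using the embedding $\ell^2\hookrightarrow\ell^\infty$ on one factor, together with the bound $\|\Sd_{j+2}\nabla B\|_{L^2}\lesssim 2^{j/2}\|\nabla B\|_{\Hh^{-1/2}}$ from Proposition~\ref{prop:s_negative}, to avoid losing any power of $\|\nabla \mathrm{v}\|_{L^2}$) closes the estimate. The main obstacle is exactly this bookkeeping in (iv): arranging the dyadic weights so that the final bound is trilinear, with one factor of $\|\nabla \mathrm{v}\|_{L^2}$ and one copy each of $\|B\|_{\Hh^{-1/2}}$ and $\|\nabla B\|_{\Hh^{-1/2}}$, relies critically on the cancellation of (iii), which relaxes the summability requirements.
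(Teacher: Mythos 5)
Your proposal follows essentially the same route as the paper's own proof: the identical Bony splitting of $\Dd_q(\mathrm{v}\cdot\nabla B)$ into commutator, low-frequency difference, diagonal paraproduct (which vanishes thanks to $\Div\,\mathrm{v}=0$) and remainder, with Lemma~\ref{l:commut} handling the first piece and the Bernstein--Young convolution argument together with Proposition~\ref{prop:s_negative} handling the remainder. The only deviations are cosmetic (e.g.\ which factor you place in $L^\infty$ versus $L^2$ when estimating the commutator term), so the argument is correct.
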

\begin{proof}
	The proof is similar to the one of Proposition~\ref{prop:blabla}, we first recast the inner product with the formula given by \eqref{inner_product}, then we apply the Bony decomposition \eqref{BONY}:
	\begin{align*}
		\langle\,  {\rm v}\cdot \nabla B,\, B\,\rangle_{\Hh^{-\frac{1}{2}}(\TT^2)} =&
		\sum_{q\in \ZZ}2^{-q} \int_{\TT^2} \Dd_q  ({\rm v}\cdot \nabla B) \cdot \Dd_q B
		=
		\sum_{q\in \ZZ}\sum_{|j-q|\leq 5}2^{-q} \int_{\TT^2} [\Dd_q, \Sd_{j-1} {\rm v}] \Dd_j\nabla B \cdot \Dd_q B\\ &+
		\sum_{q\in \ZZ}\sum_{|j-q|\leq 5}2^{-q} \int_{\TT^2} (\Sd_{j-1} {\rm v}-\Sd_{q-1} {\rm v})\cdot \Dd_q\Dd_j\nabla B \cdot \Dd_q B+
		\sum_{q\in \ZZ}2^{-q} \int_{\TT^2} \Sd_{q-1} {\rm v}\cdot \Dd_q\nabla B \cdot \Dd_q B\\ &+
		\sum_{q\in \ZZ}\sum_{j\geq q- 5}2^{-q} \int_{\TT^2} \Dd_q(\Dd_j {\rm v}\cdot \Sd_{j+2}\nabla B) \cdot \Dd_q B =: \I_1+\I_2+\I_3+\I_4.
	\end{align*}
	Using Lemma~\ref{l:commut} and recalling that the gradient $\nabla$ and the operator $\Sd_{j-1}$ commute, the first term $\I_1$ is bounded by
	\begin{equation*}
		\begin{aligned}
			\big|\I_1\big|
			&\lesssim 
			\sum_{q\in \ZZ}2^{-q}  \sum_{|j-q|\leq 5}2^{-q} \| \Sd_{j-1} \nabla {\rm v} \|_{L^\infty} \| \Dd_j \nabla B \|_{L^2} \| \Dd_q B \|_{L^2}\\
			&\lesssim 
			\sum_{q\in \ZZ}2^{-q}  \sum_{|j-q|\leq 5} \| \Sd_{j-1} \nabla {\rm v} \|_{L^2} \| \Dd_j \nabla B \|_{L^2} \| \Dd_q B \|_{L^2}\\
			&\lesssim 
			\|\,\nabla {\rm v}\,\|_{L^2}\|\,\nabla B\,\|_{\Hh^{-\frac{1}{2}}}\|\,B\,\|_{\Hh^{-\frac{1}{2}}}.
		\end{aligned}	
	\end{equation*}
	Similarly, one can show that
	\begin{equation*}
		\big|\I_2\big| \lesssim \sum_{q\in \ZZ}2^{-q}  \sum_{|j-q|\leq 5} \| (\Sd_{j-1} -\Sd_{q-1})\nabla {\rm v} \|_{L^2} \| \Dd_q\Dd_j \nabla B \|_{L^2} \| \Dd_q B \|_{L^2}\lesssim 
			\|\,\nabla {\rm v}\,\|_{L^2(\TT^2)}\|\,\nabla B\,\|_{\Hh^{-\frac{1}{2}}(\TT^2)}\|\,B\,\|_{\Hh^{-\frac{1}{2}}(\TT^2)}.
	\end{equation*}
	The third term $\I_3$ is identically null, because of the free divergence condition on ${\rm v}$:
	\begin{equation*}
		\I_3=\sum_{q\in \ZZ}2^{-q} \int_{\TT^2} \Sd_{q-1} {\rm v}\cdot \Dd_q\nabla B \cdot \Dd_q B=0.
	\end{equation*}
	Finally, the last term $\I_4$ is estimated as follows:
	\begin{equation*}
		\begin{aligned}
			\big|\I_4\big|
			&\leq 
			\sum_{q\in\ZZ}
			\sum_{j\geq q-5}2^{-q}\|\Dd_q (\Dd_j {\rm v}\cdot \Sd_{j+2}\nabla B)\|_{L^2} \|\Dd_q B \|_{L^2} \\
			&\lesssim
			\sum_{q\in\ZZ}
			\sum_{j\geq q-5}2^{-q}2^q\|\Dd_q (\Dd_j {\rm v}\cdot \Sd_{j+2}\nabla B)\|_{L^1} \|\Dd_q B \|_{L^2} 
			\\
			&\lesssim
			\sum_{q\in\ZZ}
			\sum_{j\geq q-5}\|\Dd_j {\rm v} \|_{L^2} \|\Sd_{j+2}\nabla B\|_{L^2} \|\Dd_q B \|_{L^2} 
			\\
			&\lesssim
			\sum_{q\in\ZZ}
			\sum_{j\in\ZZ}{\bf 1}_{(-\infty, 5]}(q-j)2^{\frac{q-j}{2}}\|\Dd_j \nabla {\rm v} \|_{L^2} 2^{-\frac{j+2}{2}}\|\Sd_{j+2}\nabla B\|_{L^2} 2^{-\frac{q}{2}}\|\Dd_q B \|_{L^2}.
	\end{aligned}
	\end{equation*}
	Applying the Young inequality to the last convolution and combining the result with Proposition~\ref{prop:s_negative}, we finally deduce that
	\begin{equation*}
			\big|\I_4\big|
			\lesssim
			\|\,\nabla {\rm v}\,\|_{L^2}\|\,\nabla B\,\|_{\Hh^{-\frac{1}{2}}}\|\,B\,\|_{\Hh^{-\frac{1}{2}}}
	\end{equation*}
	and this concludes the proof of the lemma.
\end{proof}
\noindent It remains to investigate inequality \eqref{log-ineq}, that we recall in the following lemma.
\begin{lemma}\label{lemma:log-ineq}
    Let $\delta W'(F), F_1, F_2, \delta F$ and $\delta u$ be as in Section~\ref{sec:uniqueness}. The following estimate holds true:
    \begin{equation}\label{log-ineq2}
	\begin{aligned}
		\Big|
		\langle	\,	 \delta W'(F)& F_1^\top+ W'(F_2) \delta F^\top	,	\nabla \delta \uu	\,	\rangle_{\Hh^{-\frac{1}{2}}}
		\Big|
		\lesssim
		(1+\| F_2 \|_{L^2})^6\| (F_1,\,F_2) \|_{H^1}^2
		\delta \EE(t)
		\ln\Big(1+\frac{1}{\delta \EE(t)}\Big)
		\\&+\,
		C
		\Big(
		\| (F_1,\,F_2,\,\uu_1,\,\uu_2)\,\|_{H^1}^\frac{3}{2}
		+
		(1+\| F_2 \|_{L^2})^6\| (F_1,\,F_2) \|_{H^1}^2
		\Big)
		\delta \EE(t)
		\,
		+\,
		\ee
		\|\,	\nabla	\delta 	\uu		\,\|_{H^{-\frac{1}{2}}	}^2+
		\ee
		\|\,	\nabla	\delta 	F		\,\|_{H^{-\frac{1}{2}}	}^2,
	\end{aligned}
	\end{equation}
	for a small positive parameter $\ee>0$.
\end{lemma}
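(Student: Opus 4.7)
The plan is to first decompose $W'(F_i)=\chi F_i+r(F_i)$, where $r(F):=W'(F)-\chi F$ is uniformly bounded in $L^\infty$ by $C_2$ (by assumption). Setting $\delta r:=r(F_1)-r(F_2)=\delta W'(F)-\chi\delta F$, one simultaneously has $|\delta r|\le 2C_2$ (bounded remainder) and $|\delta r|\le(C_3+\chi)|\delta F|$ (mean value theorem using $|W''|\le C_3$). This splits the target expression into a ``linear in $\delta F$'' piece $\chi(\delta F\,F_1^\top+F_2\,\delta F^\top)$ plus the bounded-remainder pieces $\delta r\,F_1^\top+r(F_2)\delta F^\top$, each of which is paired separately against $\nabla\delta\uu$ in the $\dot H^{-1/2}$ inner product. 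The goal is to reduce each to quantities that are either absorbable in the dissipation $\|\nabla\delta\uu\|_{\dot H^{-1/2}}^2+\|\nabla\delta F\|_{\dot H^{-1/2}}^2$ or of the form $g(t)\,\mu(\delta\mathcal E(t))$.

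For the linear terms, I would invoke the product law of Lemma~\ref{lemma_product_homogeneous_sobolev_spaces} with, e.g., $s=3/4$, $t=-1/4$ (so that $s+t-d/2=-1/2$), together with interpolation $\|F_i\|_{\dot H^{3/4}}\lesssim\|F_i\|_{L^2}^{1/4}\|F_i\|_{\dot H^1}^{3/4}$ and $\|\delta F\|_{\dot H^{-1/4}}\lesssim\|\delta F\|_{\dot H^{-1/2}}^{1/2}\|\nabla\delta F\|_{\dot H^{-1/2}}^{1/2}$. Pairing with $\|\nabla\delta\uu\|_{\dot H^{-1/2}}$ and applying Young's inequality yields a contribution of the order $C\|(F_1,F_2,\uu_1,\uu_2)\|_{H^1}^{3/2}\delta\mathcal E(t)$ plus absorbable dissipation terms, exactly as in the corresponding estimates in the proof of Proposition~\ref{prop:estEE}. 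Since only $\delta F$ is a priori mean-free, one has to split $F_1,F_2$ into their homogeneous and non-homogeneous parts $F_i=\overline{F_i}+F_i^h$ (the averages are uniformly bounded in $L^2$ by the energy) and treat $\overline{F_i}\,\delta F$ separately.

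The delicate piece, and the source of the logarithm, is $\delta r\,F_1^\top$ (and symmetrically $r(F_2)\delta F^\top$). The obstruction is that $L^\infty$ is not a multiplier on $\dot H^{-1/2}(\TT^2)$, so the estimate $\|\delta r\,F_1\|_{\dot H^{-1/2}}\le\|\delta r\|_{L^\infty}\|F_1\|_{\dot H^{-1/2}}$ is unavailable. What one can use is that $\delta r$ lies in $L^\infty\cap H^1$, with $\nabla\delta r$ an $L^2$-combination of $(W''(F_i)-\chi)\nabla F_i$ and hence $\|\nabla\delta r\|_{L^2}\lesssim\|(F_1,F_2)\|_{H^1}$, while $\|\delta r\|_{L^\infty}\le 2C_2$. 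I would apply the Bony decomposition
$\delta r\cdot F_1=T_{\delta r}F_1+T_{F_1}\delta r+R(\delta r,F_1)$
and estimate each piece with the paradifferential tools from Section~\ref{app:LP}. For the dangerous pieces, a Brezis--Gallouet--Wainger type dyadic cutoff at frequency $2^N$ is used: the part with frequencies $\le 2^N$ is absorbed via the $L^\infty$ bound on $\delta r$ at the cost of a multiplicity factor $\sim N$, while the part with frequencies $>2^N$ decays like $2^{-N}$ thanks to the $\dot H^1$ control of $\delta r$ measured against the $\dot H^{1/2}$ regularity of $\delta F$ (obtained after interpolating $\dot H^{-1/2}$ and $\dot H^{1/2}$). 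Choosing $2^N\sim\|(F_1,F_2)\|_{H^1}/\sqrt{\delta\mathcal E(t)}$ balances the two contributions and produces the factor $\ln\bigl(1+1/\delta\mathcal E(t)\bigr)$, with the prefactor $(1+\|F_2\|_{L^2})^6\|(F_1,F_2)\|_{H^1}^2$ emerging from repeated use of the interpolation $\|\cdot\|_{L^2}^2\lesssim\|\cdot\|_{\dot H^{-1/2}}\|\cdot\|_{\dot H^{1/2}}$ and from the $L^p$ bounds on $r(F_2)$ in terms of $\|F_2\|_{L^2}$.

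The main obstacle is precisely the log endpoint for $\delta r\,F_1^\top$ and $r(F_2)\delta F^\top$: the bounded remainders break the homogeneous product law, and the only way to recover an estimate compatible with $\delta\mathcal E$ (rather than $\sqrt{\delta\mathcal E}\cdot\|\nabla\delta F\|$) is through the frequency-splitting argument above. A subsidiary technical issue is carefully tracking the non-homogeneous (mean) parts of $F_1,F_2$ and $r(F_2)$: since $r(F_2)\delta F^\top$ is not a priori mean-free, the pairing with $\nabla\delta\uu$ still makes sense (because $\nabla\delta\uu$ is a gradient), but one has to project onto mean-free frequencies before quoting $\dot H^{-1/2}$ duality and the dyadic estimates, as was done for the other terms in the proof of Proposition~\ref{prop:estEE}.
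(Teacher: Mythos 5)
Your proposal is correct in its essentials and rests on the same key mechanism as the paper's proof: a dyadic frequency truncation at level $N$, a Brezis--Gallouet type balance between a low-frequency part whose $L^\infty$ control grows like $\sqrt{N}$ and a high-frequency tail decaying like $2^{-N/2}$, and the choice $N\sim\ln(1+1/\delta\EE(t))$ to generate the logarithmic modulus. The differences are organizational. The paper does not pass through the decomposition $W'=\chi\,\cdot+r$: it uses the Lipschitz bound to write $\|\delta W'(F)\|_{L^p}\lesssim\|\delta F\|_{L^p}$ directly and applies the cutoff to the \emph{coefficient}, $F_1^\top=\Sd_NF_1^\top+(\Id-\Sd_N)F_1^\top$, rather than running a Bony decomposition of $\delta r\cdot F_1$; this is a streamlined version of what you describe, and your identification of $\delta r\,F_1^\top$ as the obstruction to a pure product-law argument is accurate. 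On the other hand, the term $W'(F_2)\delta F^\top$ needs no logarithm at all: since $\|W'(F_2)\|_{L^2}\lesssim 1+\|F_2\|_{L^2}$ and $\|\nabla W'(F_2)\|_{L^2}\lesssim\|\nabla F_2\|_{L^2}$, the product law $\dot H^{1/4}\times\dot H^{1/4}\to\dot H^{-1/2}$ with the heavy interpolation weight placed on the difference, $\|\delta F\|_{\dot H^{1/4}}\lesssim\|\delta F\|_{\dot H^{-1/2}}^{1/4}\|\nabla\delta F\|_{\dot H^{-1/2}}^{3/4}$, closes by Young and is exactly the origin of the prefactor $(1+\|F_2\|_{L^2})^6\|\nabla F_2\|_{L^2}^2$; your symmetric log treatment of $r(F_2)\delta F^\top$ is sound but superfluous. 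One caution on your linear piece $\chi\,\delta F\,F_1^\top$: with the exponents $s=3/4$, $t=-1/4$, Young's inequality returns a coefficient of order $\|F_1\|_{L^2}\|\nabla F_1\|_{L^2}^{3}$, which is only $L^{2/3}$ in time from the energy bounds and hence not admissible for the subsequent Osgood argument (and does not match the power $3/2$ you announce); choosing $\dot H^{1/4}\times\dot H^{1/4}$ instead, so that the larger interpolation exponent falls on the dissipation of the difference, keeps the coefficient in $L^1_{\rm loc}$ in time, as in the paper.
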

\begin{proof}
    We begin with controlling the second term of the left-hand side, which is
    \begin{equation*}
    \begin{aligned}
        |\langle W'(F_2) \delta F^\top	,	\nabla \delta \uu	\,	\rangle_{\Hh^{-\frac{1}{2}}}|
        &\leq
        \| W'(F_2) \delta F^\top \|_{\Hh^{-\frac 12}}
        \| \nabla \delta \uu\|_{\Hh^{-\frac 12}}
        \lesssim
        \| W'(F_2) \|_{\Hh^\frac{1}{4}}
        \| \delta F^\top \|_{\Hh^\frac{1}{4}}
        \| \nabla \delta \uu\|_{\Hh^{-\frac 12}}\\
        &\lesssim
        \| W'(F_2) \|_{L^2}^\frac{3}{4}
        \| \nabla W'(F_2) \|_{L^2}^\frac{1}{4}
        \| \delta F \|_{\Hh^{-\frac 12}}^\frac{1}{4}
        \| \nabla \delta F \|_{\Hh^{-\frac 12}}^\frac{3}{4}
        \| \nabla \delta \uu\|_{\Hh^{-\frac 12}}\\
        &\lesssim
        (1+\| F_2 \|_{L^2})^\frac{3}{4}
        \| W''(F_2)\nabla F_2 \|_{L^2}^\frac{1}{4}
        \| \delta F \|_{\Hh^{-\frac 12}}^\frac{1}{4}
        \| \nabla \delta F \|_{\Hh^{-\frac 12}}^\frac{3}{4}
        \| \nabla \delta \uu\|_{\Hh^{-\frac 12}}\\
        &\lesssim
        (1+\| F_2 \|_{L^2})^\frac{3}{4}
        \| \nabla F_2 \|_{L^2}^\frac{1}{4}
        \| \delta F \|_{\Hh^{-\frac 12}}^\frac{1}{4}
        \| \nabla \delta F \|_{\Hh^{-\frac 12}}^\frac{3}{4}
        \| \nabla \delta \uu\|_{\Hh^{-\frac 12}}\\
        &\lesssim
        (1+\| F_2 \|_{L^2})^6
        \| \nabla F_2 \|_{L^2}^2
        \| \delta F \|_{\Hh^{-\frac 12}}^2
        +\ee
        \| \nabla \delta F \|_{\Hh^{-\frac 12}}^2
        +
        \ee
        \| \nabla \delta \uu\|_{\Hh^{-\frac 12}}^2.
    \end{aligned}
    \end{equation*}
    We now deal with the first term on the left-hand side of inequality \eqref{log-ineq2}. First we split the matrix $F_1^\top$ into two main terms, addressing the low and high frequencies of $F_1$: $F_1^\top = \Sd_N F_1^\top + (\Id - \Sd_N)F_1^\top$, for a suitable radius $N$, to be fixed later. Hence, we remark that
    \begin{equation*}
        |\langle \delta W'(F) \Sd_N F_1^\top, \nabla \delta \uu \rangle_{\Hh^{-\frac{1}{2}}}|
        \leq 
        \| \delta W'(F)\Sd_N F_1^\top \|_{L^2} \|\nabla \delta \uu \|_{\Hh^{-1}}
        \leq 
        \| \delta W'(F) \|_{L^2}\| \Sd_N F_1^\top \|_{L^\infty} \|\delta \uu^h \|_{L^2}.
    \end{equation*}
    Since $\|f \|_{L^\infty} \lesssim \| f \|_{H^1}(1+\sqrt{N})$ for any function $f$ whose Fourier transform is supported in the ball  $B_N(0)$, we deduce that
    \begin{equation*}
    \begin{aligned}
        |\langle \delta W'(F) \Sd_N F_1^\top, \nabla \delta \uu \rangle_{\Hh^{-\frac{1}{2}}}|
        &\lesssim 
        \| \delta W'(F) \|_{L^2}\| F_1\|_{H^1}(1+\sqrt{N})
        \|\delta \uu^h \|_{\Hh^{-\frac{1}{2}}}^\frac{1}{2}
        \|\nabla \delta \uu \|_{\Hh^{-\frac{1}{2}}}^\frac{1}{2}\\
        &\lesssim 
        \| \delta F \|_{L^2}\| F_1\|_{H^1}(1+\sqrt{N})
        \|\delta \uu^h \|_{\Hh^{-\frac{1}{2}}}^\frac{1}{2}
        \|\nabla \delta \uu \|_{\Hh^{-\frac{1}{2}}}^\frac{1}{2}\\
        &\lesssim 
        \| \delta F \|_{\Hh^{-\frac{1}{2}}}^\frac{1}{2}\| \nabla \delta F \|_{\Hh^{-\frac{1}{2}}}^\frac{1}{2}\| F_1\|_{H^1}(1+\sqrt{N})
        \|\delta \uu^h \|_{\Hh^{-\frac{1}{2}}}^\frac{1}{2}
        \|\nabla \delta \uu \|_{\Hh^{-\frac{1}{2}}}^\frac{1}{2}\\
        &\lesssim 
        \| F_1\|_{H^1}^2
        (\| \delta F \|_{\Hh^{-\frac{1}{2}}}^2+\|\delta \uu^h \|_{\Hh^{-\frac{1}{2}}}^2)(1+N)
        +
        \ee
        \| \nabla \delta F \|_{\Hh^{-\frac{1}{2}}}^2+
        \ee
        \|\nabla \delta \uu \|_{\Hh^{-\frac{1}{2}}}^2.
    \end{aligned}
    \end{equation*}
    Next, we consider the high frequencies of $F_2^\top$, to gather the following inequality:
    \begin{equation*}
        \begin{aligned}
            |\langle \delta W'(F) (\Id -\Sd_N )F_1^\top, \nabla \delta \uu \rangle_{\Hh^{-\frac{1}{2}}}|
            &\leq
            \|  \delta W'(F) (\Id -\Sd_N )F_1^\top \|_{L^2}
            \| \delta \uu \|_{L^2}\\
            &\lesssim
            \| \delta W'(F)\|_{L^4}
            \| (\Id -\Sd_N )F_1^\top\|_{L^4}
            \| \delta \uu \|_{L^2}\\
            &\lesssim
            \| \delta F\|_{L^4}
            \Big( \sum_{q \geq N}\| \Dd_q F_1\|_{L^4}\Big)
           \| \delta \uu \|_{L^2}\\
            &\lesssim
            \| \delta F\|_{L^2}^\frac{1}{2}
            \|\nabla \delta F\|_{L^2}^\frac{1}{2}
            \Big( \sum_{q \geq N}\| \Dd_q F_1\|_{L^4}\Big)
            \| \delta \uu \|_{L^2}\\
            &\lesssim
            \| \delta F\|_{L^2}^\frac{1}{2}
            \|\nabla \delta F\|_{L^2}^\frac{1}{2}
            \Big( \sum_{q \geq N}2^{\frac{q}{2}}\| \Dd_q F_1\|_{L^2}\Big)
            \| \delta \uu \|_{L^2}\\
             &\lesssim
            \| \delta F\|_{L^2}^\frac{1}{2}
            \|\nabla \delta F\|_{L^2}^\frac{1}{2}
            \Big( \sum_{q \geq N}2^{-\frac{q}{2}}\| \Dd_q \nabla F_1\|_{L^2}\Big)
            \| \delta \uu \|_{L^2}\\
            &\lesssim
            \| \delta F\|_{L^2}^\frac{1}{2}
            \|\nabla \delta F\|_{L^2}^\frac{1}{2}
            \| \nabla F_1 \|_{L^2}^\frac{1}{2}
            \| \delta \uu \|_{L^2}
            2^{-\frac{N}{2}}.
        \end{aligned}
    \end{equation*}
    We hence set up the radius $N>0$ to satisfy
    \begin{equation*}
        N := 2\log_2 e \ln\Big( 1+\frac{1}{\delta \EE(t)}\Big),
    \end{equation*}
    from which we deduce that
    \begin{equation*}
        \begin{aligned}
            |\langle \delta W'(F) (\Id -\Sd_N )F_1^\top, \nabla \delta u \rangle_{\Hh^{-\frac{1}{2}}}|
            &\lesssim
            \| \delta F\|_{L^2}^\frac{1}{2}
            \|\nabla \delta F\|_{L^2}^\frac{1}{2}
            \| \nabla F_2 \|_{L^2}^\frac{1}{2}
            \| \delta \uu \|_{L^2}
            \delta \EE(t).
        \end{aligned}
    \end{equation*}
    Summarizing the previous inequalities and recalling that 
    \begin{equation*}
        \| (\uu_1,\uu_2,F_1,F_2) \|_{H^1} = \| \uu_1 \|_{H^1} + \| \uu_2 \|_{H^1} +
        \| F_1 \|_{H^1} + \| F_2 \|_{H^1},
    \end{equation*}
    we finally achieve the estimate \eqref{log-ineq2}, which concludes the proof of the lemma.
\end{proof}

\vspace{1em}
\noindent{\textbf{Acknowledgment}.} This work is partially funded by the Deutsche Forschungsgemeinschaft (DFG, German Research Foundation), grant SCHL 1706/4-2, project number 391682204.

{\small

}
\end{document}